\documentclass{article}
\pdfoutput=1
\usepackage[pdftex,colorlinks=true]{hyperref}
\usepackage{amsmath,amssymb,mathrsfs,amsthm}
\usepackage{array,enumerate}
\usepackage{latexsym}
\usepackage{mathrsfs}
\usepackage{amscd}
\usepackage{url}
\usepackage{comment}
\usepackage{framed, color}
\usepackage{ulem}
\usepackage{braket}
\usepackage[all]{xy}
\usepackage{geometry}
\geometry{left=25mm,right=25mm,top=30mm,bottom=30mm}
\usepackage{stmaryrd}

\theoremstyle{plain}
\newtheorem{thm}{Theorem}[section]
\newtheorem*{thm*}{Theorem}
\makeatletter
    \let\c@equation\c@thm
    
    \@addtoreset{equation}{section}
\makeatother
\newcommand{\bvec}[1]{\mbox{\boldmath $#1$}}
\newtheorem{cor}[thm]{Corollary}
\newtheorem*{cor*}{Corollary}
\newtheorem{prop}[thm]{Proposition}
\newtheorem*{prop*}{Proposition}
\newtheorem{lem}[thm]{Lemma}
\newtheorem*{lem*}{Lemma}

\newtheorem*{sublem*}{Lemma}
\newtheorem{claim}[thm]{Claim}
\newtheorem*{claim*}{Claim}
\theoremstyle{definition}
\newtheorem{dfn}[thm]{Definition}
\newtheorem*{dfn*}{Definition}

\newtheorem*{conj*}{Conjecture}

\newtheorem*{cnst*}{Construction}

\newtheorem*{pers*}{Perspective}

\newtheorem*{dig*}{Digression}
\theoremstyle{remark}
\newtheorem{rem}[thm]{Remark}
\newtheorem*{rem*}{Remark}

\newtheorem*{ex*}{Example}

\newtheorem*{nt*}{Notation}

\newcommand{\void}[1]{}



\title{Cube invariance of higher Chow groups with modulus}
\author{Hiroyasu Miyazaki
	\thanks{
	RIKEN iTHES, and 
	Graduate School of Mathematical Sciences, the University of Tokyo.
	\newline
	\textit{E-mail address 1}: \protect\url{hiroyasu.miyazaki@riken.jp}
	\textit{E-mail address 2}: \protect\url{miyazaki@ms.u-tokyo.ac.jp}} }
	
\begin{document}

\title{Cube invariance of higher Chow groups with modulus
}


\author{Hiroyasu Miyazaki
	\thanks{
	RIKEN iTHES, and 
	Graduate School of Mathematical Sciences, the University of Tokyo.
	\newline
	\textit{E-mail address 1}: \protect\url{hiroyasu.miyazaki@riken.jp}
	\textit{E-mail address 2}: \protect\url{miyazaki@ms.u-tokyo.ac.jp}} }

\maketitle

\begin{abstract}
The higher Chow group with modulus was introduced by Binda-Saito as a common generalization of Bloch's higher Chow group and the additive higher Chow group.
In this paper, we study invariance properties of the higher Chow group with modulus.
First, we formulate and prove ``cube invariance," which generalizes $\mathbb{A}^1$-homotopy invariance of Bloch's higher Chow group.
Next, we introduce the nilpotent higher Chow group with modulus, as an analogue of the nilpotent algebraic $K$-group, and define a module structure on it over the big Witt ring of the base field.
We deduce from the module structure that the higher Chow group with modulus with appropriate coefficients satisfies $\mathbb{A}^1$-homotopy invariance.
We also prove that $\mathbb{A}^1$-homotopy invariance implies independence from the multiplicity of the modulus divisors.
\end{abstract}

\subsection*{Acknowledgements}
The results in the present paper are based on the author's Ph.D. thesis work.
The author would like to express his deepest gratitude to Professor Shuji Saito for invitation to the truly interesting theory of modulus and for strong and continuous encouragements in the author's adversity.
My heartfelt appreciation also goes to Professor Tomohide Terasoma for his kind and  helpful advices, which improved the arguments in the paper.
The author is grateful to Wataru Kai and Ryomei Iwasa for the friendship and exciting discussions, which led the author to a better understanding of the theory.
The author would particularly like to thank Wataru Kai for suggesting key points of the proof of Proposition \ref{vanishing}.
The formulation of cube invariance came to the author's mind when he was in Niseko, Hokkaido, Japan, to attend the workshop ``Regulators in Niseko" in September, 2015.
The author thanks the organizers of the workshop.
The work in the paper was supported by a Grant-in-Aid for JSPS Fellows (Grant Number 15J08833) and the Program for Leading Graduate Schools, MEXT, Japan.
It was partially supported by RIKEN iTHES project, too.

\tableofcontents

\section{Introduction}

For a separated scheme $X$ which is equidimensional of finite type over a field $k$, S. Bloch introduced in \cite{B} the higher Chow group $\mathrm{CH}^r (X,\ast)$ of codimension $r$ to give a description of Quillen's algebraic $K$-theory by using algebraic cycles.
It is a very nice cohomology theory, which is isomorphic to the motivic cohomology group defined by V. Voevodsky.
However, there is a problem that the higher Chow group forgets the information of {\it non-reduced} schemes.
Indeed, though $K_\ast (X) \neq K_\ast (X_{\mathrm{red}})$ in general, we have $\mathrm{CH}^r (X,\ast) = \mathrm{CH}^r (X_{\mathrm{red }},\ast)$.

\ 

Recently, F. Binda and S. Saito defined in \cite{BS} the higher Chow group with modulus, as a generalization of Bloch's higher Chow group, to capture the information of non-reduced schemes.
It is defined for a pair $(X,D)$ and is denoted by $\mathrm{CH}^r (X|D,\ast )$, where $X$ is a scheme and $D$ is an effective Cartier divisor on $X$.
The most important point is that the divisor $D$ has the information of multiplicity, which means that $D$ is not necessarily reduced.
It is expected that $\mathrm{CH}^r (X|D,\ast )$ is a nice counterpart of the relative $K$-group $K_\ast (X,D)$.
For example, R. Iwasa and W. Kai constructed in \cite{IK} the relative Chern class, which relates the two groups.
We would like to note that the higher Chow group with modulus is also a generalization of the additive higher Chow group, which was introduced by S. Bloch and H. Esnault in \cite{BE}, \cite{BE2} and studied by A. Krishna, M. Levine, J. Park and K. R\"{u}lling (e.g. \cite{KL}, \cite{KP}, \cite{Ru}).

\ 

The main purpose of the present paper is to study invariance properties of the higher Chow group with modulus.
In \cite[Theorem 2.1]{B}, it is shown that the higher Chow group satisfies $\mathbb{A}^1$-homotopy invariance
\[
\mathrm{CH}^r (X,\ast) \stackrel{\cong}{\longrightarrow} \mathrm{CH}^r (X \times \mathbb{A}^1 , \ast),
\]
which is the most fundamental property.
However, the higher Chow group with modulus does not satisfy $\mathbb{A}^1$-homotopy invariance, as well as the relative $K$-group does not.
Our aim is to answer the following questions:
\begin{itemize}
\item[Q1.]
What is the generalization of $\mathbb{A}^1$-homotopy invariance for the higher Chow group with modulus?
\item[Q2.]
What is the ``error term" of $\mathbb{A}^1$-homotopy invariance, and what is its structure like?
\end{itemize}

Moreover, we also consider the following question, which turns out to be closely related to invariance properties:
\begin{itemize}
\item[Q3.]
How does the higher Chow group with modulus depend on the multiplicity of the divisor $D$?
\end{itemize}

Our first result is the following theorem, which answers Q1.
To formulate the statement, we need to generalize the definition of the higher Chow group with modulus in \S 2.
Though the higher Chow group with modulus was originally defined for a pair $(X,D)$ with $D$ an effective Cartier divisor on $X$, we permit the divisor $D$ to be {\it non-effective}.
The definition coincides with the original one when $D$ is effective.
An advantage of this generalization is that we can consider the following special pair
\[
\overline{\square}^{(-1)} = (\mathbb{P}^1, -\{\infty \}),
\]
which we call the {\it minus cube}.
Moreover, for two pairs $\mathscr{X}=(X,D)$ and $\mathscr{Y}=(Y,E)$, we define the tensor product 
\[
\mathscr{X} \otimes \mathscr{Y} = (X \times Y ,D \times Y + X \times E).
\]

\begin{thm}\label{main-1} (Cube invariance. See Theorem \ref{cube-invariance})
For any pair $\mathscr{X}=(X,D)$ and for any non-negative integer $r \geq 0$, we have a canonical isomorphism
\[
\mathrm{CH}^r (\mathscr{X} ,q) \xrightarrow{\cong } \mathrm{CH}^r (\mathscr{X} \otimes \overline{\square}^{(-1)} ,q).
\]
If $D$ is a trivial divisor, then the isomorphism coincides with $\mathbb{A}^1$-homotopy invariance.
\end{thm}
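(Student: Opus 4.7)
The plan is to adapt Bloch's proof of $\mathbb{A}^{1}$-homotopy invariance to the modulus setting, exploiting the relaxation of the modulus condition provided by the negative summand $-X\times\{\infty\}$ in $\mathscr{X}\otimes\overline{\square}^{(-1)}$. The map in question is the flat pullback $p^{*}$ along the projection $p\colon\mathscr{X}\otimes\overline{\square}^{(-1)}\to\mathscr{X}$, and I would first check that it is well-defined on cycle complexes: admissibility of $Z\times\mathbb{P}^{1}$ for the modulus $D\times\mathbb{P}^{1}-X\times\{\infty\}$ is immediate because the negative summand only weakens the defining inequality, while the original modulus condition for $Z$ is preserved along the product normalization. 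Fixing a $k$-rational point $0\in\mathbb{P}^{1}\smallsetminus\{\infty\}$, the section $i_{0}\colon\mathscr{X}\to\mathscr{X}\otimes\overline{\square}^{(-1)}$ gives, after a standard moving argument, a pullback $i_{0}^{*}$ with values in $z^{r}(\mathscr{X},\bullet)$, and the composition $i_{0}^{*}\circ p^{*}$ is the identity by inspection, so $p^{*}$ is split injective on homology.

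To show that $p^{*}$ is an isomorphism, the main task is to exhibit an explicit chain homotopy between $\mathrm{id}$ and $p^{*}\circ i_{0}^{*}$ on $z^{r}(\mathscr{X}\otimes\overline{\square}^{(-1)},\bullet)$. Introduce a new cube coordinate $t$ and a rational map $\mu\colon\mathbb{P}^{1}_{u}\times\mathbb{P}^{1}_{t}\to\mathbb{P}^{1}$ of ``multiplication by a linear fractional function of $t$'' type, chosen so that on one face of $\square_{t}$ the map evaluates to the identity in $u$ and on the other face it is the constant $0$; concretely, with the convention $\square=\mathbb{P}^{1}\smallsetminus\{1\}$ and faces $\{0\},\{\infty\}$, one may take $\mu(u,t)=ut/(t-1)$. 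For an admissible cycle $Z\subset X\times\mathbb{P}^{1}_{u}\times\square^{q}$, define $H(Z)\subset X\times\mathbb{P}^{1}_{u}\times\square^{q+1}$ as the proper transform of $Z$ under $\mathrm{id}_{X}\times\mu\times\mathrm{id}_{\square^{q}}$. The face computations then yield $H(Z)\big|_{t=\infty}=Z$ and $H(Z)\big|_{t=0}=p^{*}i_{0}^{*}Z$, and compatibility with the boundary maps in the remaining cube variables gives $\partial H+H\partial=\mathrm{id}-p^{*}i_{0}^{*}$ after passing to the Suslin-style subcomplex of cycles in good position.

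The main obstacle is verifying the modulus condition for $H(Z)$. The pole divisor of $\mu$ on $\mathbb{P}^{1}_{u}\times\mathbb{P}^{1}_{t}$ is $\pi_{u}^{*}\{\infty\}+\pi_{t}^{*}\{1\}$, so pulling back the modulus inequality for $Z$, which on the normalization of its closure reads $\nu_{Z}^{*}\pi_{X}^{*}D-\nu_{Z}^{*}\pi^{*}\{\infty\}\leq\nu_{Z}^{*}F^{1}_{q}$, along the proper transform yields
\[
\nu_{H(Z)}^{*}\pi_{X}^{*}D\;-\;\nu_{H(Z)}^{*}\pi_{u}^{*}\{\infty\}\;-\;\nu_{H(Z)}^{*}\pi_{t}^{*}\{1\}\;\leq\;\nu_{H(Z)}^{*}F^{1}_{q},
\]
and moving $\pi_{t}^{*}\{1\}$ to the right-hand side to form $F^{1}_{q+1}$ gives exactly the modulus condition for $H(Z)$ on $\mathscr{X}\otimes\overline{\square}^{(-1)}$ at level $q+1$. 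The crucial point is that the term $-\nu^{*}\pi_{u}^{*}\{\infty\}$ appearing on the left is precisely what the negative summand $-X\times\{\infty\}$ of the modulus of $\mathscr{X}\otimes\overline{\square}^{(-1)}$ absorbs; without the negative summand, the homotopy $H$ would destroy admissibility. Making the computation rigorous requires comparing divisor pullbacks on normalizations of proper transforms via the containment-of-modulus machinery developed in the earlier sections.

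Combining the chain homotopy with $i_{0}^{*}\circ p^{*}=\mathrm{id}$ yields $p^{*}$ as a quasi-isomorphism of cycle complexes, hence the desired isomorphism on $\mathrm{CH}^{r}$. When $D=0$, the negative divisor $-X\times\{\infty\}$ imposes no restriction on $X\times\mathbb{A}^{1}_{u}\subset X\times\mathbb{P}^{1}_{u}$, so $\mathrm{CH}^{r}(\mathscr{X}\otimes\overline{\square}^{(-1)},q)$ agrees with the usual higher Chow group of $X\times\mathbb{A}^{1}$, and the construction reduces to Bloch's homotopy for $\mathbb{A}^{1}$-invariance, which yields the last assertion of the theorem.
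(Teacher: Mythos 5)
Your overall strategy matches the paper's: split injectivity from $i_0^{*}\circ\mathrm{pr}_1^{*}=\mathrm{id}$, then show surjectivity on homology by constructing a chain homotopy out of the multiplication map. The specific homotopy you write down via the rational map $\mu(u,t)=ut/(t-1)$ and its proper transform is morally the same as the paper's two-step version, which first records (Lemma \ref{adm-multiplication}, citing a blow-up argument of Kahn--Saito--Yamazaki) that multiplication defines a flat, left fine, coadmissible morphism $\overline{\square}^{(-1)}\otimes\overline{\square}^{(-1)}\to\overline{\square}^{(-1)}$ of modulus pairs, and then applies the flat pullback $\mu^{*}$ followed by the Rigidity Lemma \ref{rigidity} (the tautological shift of the extra $\overline{\square}^{(-1)}$-factor into a cube coordinate). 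Your on-the-fly modulus computation for $H(Z)$ is essentially a compressed re-derivation of that coadmissibility statement; the paper's factorization is cleaner because the same ingredients are reused elsewhere, but there is no conceptual disagreement here.

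The genuine gap is in the two places where you write ``after a standard moving argument'' and ``after passing to the Suslin-style subcomplex of cycles in good position.'' To pass from $z^{r}$ to the good-position subcomplex $z^{r}_{w}$ (needed both to make $i_0^{*}$ well-defined and to make the face computations of the homotopy land where you want) one needs the inclusion $z^{r}_{w}\hookrightarrow z^{r}$ to be a quasi-isomorphism, and in the modulus setting Bloch's generic-translation homotopy $\Psi^{*}(V_K\times\mathbb{A}^1)$ does \emph{not} work as written: translating a cycle generically along an algebraic group action moves the closure near the modulus divisor in a way that destroys the modulus inequality, so $\Psi^{*}(V_K\times\mathbb{A}^1)$ fails to lie in $\underline{z}^{r}(\mathscr{Y}_K,\ast)$ in general. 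The paper's key technical innovation --- Theorem \ref{moving-lemma}, proved via Theorem \ref{construction-of-homotopy-1}(d) --- is the ``powered generic translation'' $\mathcal{H}_q^d(V)=(f_q^{d+1})_{*}\Psi^{*}(V_K\times\mathbb{A}^1)-(f_q^{d})_{*}\Psi^{*}(V_K\times\mathbb{A}^1)$, where one pushes forward along the $d$-th power map $\rho^{d}\colon\mathbb{A}^1\to\mathbb{A}^1$ for $d$ large enough (with $d_V$ determined by Lemma \ref{Archimedean-property}) to force the modulus condition. This is not a routine adaptation of the classical moving lemma, and your appeal to a ``standard moving argument'' glosses over precisely the step where the modulus condition bites hardest. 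Without this ingredient the argument does not go through.
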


This result is related to the work of B. Kahn,  S. Saito and T. Yamazaki in \cite{KSY}.
They constructed the category of motives with modulus, which is an extension of Voevodsky's category of motives.
Their idea is to consider sheaves $\mathcal{F}$ on the category of pairs $\mathscr{X}=(X,D)$ satisfying the cube invariance $\mathcal{F}(\mathscr{X})\cong \mathcal{F}(\mathscr{X} \otimes \overline{\square})$, where $\overline{\square} = (\mathbb{P}^1 ,\{\infty \})$.
The difference of signs of the infinity point comes from the difference of functoriality of $\mathcal{F}$ and $\mathrm{CH}^r (\mathscr{X},\ast )$.
A sheaf $\mathcal{F}$ is supposed to be contravariantly functorial with respect to {\it admissible} morphisms, while the higher Chow group with modulus is contravariantly functorial with respect to {\it coadmissible} (and flat) morphisms (see Definition \ref{adm-coadm}).
To bridge these two types of functorialities, we will need to generalize Poincar\'{e} duality theorem in our setting.

The strategy of the proof of the above theorem is the same as that of $\mathbb{A}^1$-homotopy invariance of the higher Chow group in \cite{B} (see \S \ref{pf-of-inv}).
The key ingredients are Rigidity-lemma (Lemma \ref{rigidity}) and Moving-lemma (Theorem \ref{moving-lemma} and Corollary \ref{special-moving-lemma}).
\S \ref{section-cube-invariance} is devoted to the proofs of these results.

\ 

To answer Q2, we briefly recall C. Weibel's results on the nilpotent $K$-theory.
The nilpotent $K$-group $\mathrm{NK}_\ast (X)$ of a scheme $X$ is defined to be the cokernel of the map $\mathrm{pr}_1^\ast : K_\ast (X) \to K_\ast (X \times \mathbb{A}^1 )$ of Quillen's $K$-groups induced by the first projection.
Since the zero section induces a retraction of $\mathrm{pr}_1^\ast$, we obtain a canonical splitting $K_\ast (X \times \mathbb{A}^1 ) \cong K_\ast (X) \oplus \mathrm{NK}_\ast (X)$.
This means that Quillen's $K$-group satisfies $\mathbb{A}^1$-homotopy invariance if and only if the nilpotent $K$-group vanishes.
In \cite{Wei}, Weibel constructed a module structure on $\mathrm{NK}_\ast (X)$ over the big Witt ring $\mathbb{W}(k)$ of the base field $k$.
He deduced from the module structure the following vanishing results:
\begin{itemize}
\item If the characteristic of $k$ is $p>0$, then $\mathrm{NK}_\ast (X)$ is a $p$-group.
In particular, it vanishes if we ignore $p$.
\item If the characteristic of $k$ is zero, then $\mathrm{NK}_\ast (X)$ is a $k$-vector space. In particular, it vanishes in finite coefficients.
\end{itemize}

In \S 5, we introduce the {\it nilpotent higher Chow group with modulus}, denoted by $\mathrm{NCH}^r (\mathscr{X},\ast )$, which fits in the following canonical splitting:
\[
\mathrm{CH}^r (\mathscr{X} \otimes \mathbb{A}^1 ,\ast ) \cong \mathrm{CH}^r (\mathscr{X},\ast ) \oplus \mathrm{NCH}^r (\mathscr{X},\ast ).
\]
Here, we regard the scheme $\mathbb{A}^1$ as the pair $(\mathbb{A}^1 ,\emptyset )$. Therefore, by definition, $\mathscr{X} \otimes \mathbb{A}^1 =(X \times \mathbb{A}^1 ,D \times \mathbb{A}^1 )$.
The following theorem is an analogue of Weibel's result, which answers Q2.
In fact, for any abelian group $A$, we can define the higher Chow group with modulus $\mathrm{CH}^r (\mathscr{X},\ast ,A)$ and the nilpotent higher Chow group with modulus $\mathrm{NCH}^r (\mathscr{X},\ast ,A)$ {\it with coefficients in} $A$ (see Definition \ref{degenerate-cycle}).
For example, $\mathrm{NCH}^r (\mathscr{X},\ast ,\mathbb{Z}[1/p]) = \mathrm{NCH}^r (\mathscr{X},\ast ) \otimes_\mathbb{Z} \mathbb{Z}[1/p]$.
They also satisfy the same splitting as above.

\begin{thm}\label{main-vanishing} ($\mathbb{A}^1$-homotopy invariance. See Theorem \ref{Witt-action}, \ref{vanishing-of-NK} and Corollary \ref{hom-inv-part})
Let $A$ be an abelian group.
For any pair $\mathscr{X}=(X,D)$ and for any non-negative integer $r \geq 0$, there exists a continuous module strucutre on the nilpotent higher Chow group $\mathrm{NCH}^r (\mathscr{X},\ast ,A)$ over the big Witt ring $\mathbb{W}(k)$ of the base field $k$.
In particular, we have the following results:
\begin{itemize}
\item[$\mathrm{(a)}$] If the characteristic of $k$ is $p>0$, then $\mathrm{NCH}^r (\mathscr{X},\ast ,A)$ is a $p$-group.
In particular, it vanishes if $A$ is a $\mathbb{Z}[1/p]$-module (e.g. $A=\mathbb{Z}[1/p]$, or $A=\mathbb{Z}/l^n$ for $(l,p)=1$).

\item[$\mathrm{(b)}$] If the characteristic of $k$ is zero, then $\mathrm{NCH}^r (\mathscr{X},\ast ,A)$ is a $k$-vector space. In particular, it vanishes if $A$ is a torsion abelian group.
\end{itemize}
Moreover, if $\mathrm{NCH}^r (\mathscr{X},\ast ,A)$ vanishes, the higher Chow group with modulus with coefficients in $A$ satisfies $\mathbb{A}^1$-homotopy invariance i.e.
\[
\mathrm{CH}^r (\mathscr{X},\ast ,A) \xrightarrow{\cong } \mathrm{CH}^r (\mathscr{X}\otimes \mathbb{A}^1 ,\ast ,A).
\]
\end{thm}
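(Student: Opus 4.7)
The plan is to mirror Weibel's treatment of $\mathrm{NK}_\ast$ in the cycle-theoretic setting of Binda--Saito. The zero-section provides the canonical splitting $\mathrm{CH}^r(\mathscr{X} \otimes \mathbb{A}^1, \ast, A) \cong \mathrm{CH}^r(\mathscr{X}, \ast, A) \oplus \mathrm{NCH}^r(\mathscr{X}, \ast, A)$, and once a continuous $\mathbb{W}(k)$-module structure on the summand $\mathrm{NCH}^r(\mathscr{X}, \ast, A)$ is constructed, assertions (a), (b), and the final $\mathbb{A}^1$-homotopy invariance follow from formal arguments identical to Weibel's.

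To construct the action, I would exploit the natural endomorphisms of the extra $\mathbb{A}^1$-factor. Since $\mathbb{W}(k)$ is topologically generated by Teichm\"uller classes $[a]$ for $a \in k$ and Verschiebungs $V_n$ for $n \geq 1$, I define three types of operators on the cycle complex of $\mathscr{X} \otimes \mathbb{A}^1$:
\begin{enumerate}
\item pullback along the dilation $t \mapsto at$, realizing the action of $[a]$;
\item pullback along the finite flat endomorphism $\phi_n \colon t \mapsto t^n$, realizing the Frobenius $F_n$;
\item pushforward, as a finite flat correspondence, along $\phi_n$, realizing the Verschiebung $V_n$.
\end{enumerate}
Each of these preserves the zero-section and hence descends to a well-defined operator on $\mathrm{NCH}^r(\mathscr{X}, \ast, A)$. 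The bulk of the argument is to verify the defining Witt-ring relations --- in particular $F_n V_n = n \cdot \mathrm{id}$, $F_n[a] = [a^n] F_n$, and the projection formula $V_n(x \cdot F_n(y)) = V_n(x) \cdot y$ --- at the level of cycles; each reduces to a transparent intersection-theoretic identity on $\mathscr{X} \otimes \mathbb{A}^1$. Continuity of the action amounts to showing that for a fixed cycle $\alpha$ the iterated Verschiebungs $V_n \alpha$ are eventually admissibly equivalent to zero, which is a consequence of how the modulus $D \times \mathbb{A}^1$ interacts with the pullbacks $\phi_n^{-1}$ for large $n$.

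With the continuous $\mathbb{W}(k)$-action in hand, the structural consequences are extracted exactly as in \cite{Wei}. In characteristic $p$, the identity $F_p V_p = p$ together with continuity forces every element of a continuous $\mathbb{W}(k)$-module to be annihilated by a power of $p$, so $\mathrm{NCH}^r(\mathscr{X}, \ast, A)$ is $p$-primary torsion and vanishes whenever $A$ is a $\mathbb{Z}[1/p]$-module. In characteristic zero, the first ghost component exhibits $k$ as a subring of $\mathbb{W}(k)$, endowing $\mathrm{NCH}^r$ with a $k$-vector space structure; it is then uniquely divisible and vanishes against any torsion coefficient group. The last assertion of the theorem, the $\mathbb{A}^1$-homotopy invariance of $\mathrm{CH}^r(\mathscr{X},\ast, A)$ when $\mathrm{NCH}^r(\mathscr{X}, \ast, A) = 0$, is then immediate from the initial splitting.

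The principal obstacle I anticipate lies in the construction of the pushforward operator $V_n$ at the chain level. Although $\mathrm{id}_X \times \phi_n$ is finite flat, pushing forward an admissible cycle of modulus $(X \times \mathbb{A}^1, D \times \mathbb{A}^1)$ requires showing that the image still satisfies the modulus condition and, more subtly, that the push-forward is compatible with the boundary maps of the cubical cycle complex. This will likely demand a moving argument in the spirit of the moving lemma (Theorem \ref{moving-lemma}) used in the proof of cube invariance, together with careful bookkeeping to ensure that the Witt-ring relations hold in the homotopy category of cycle complexes rather than merely up to a controlled boundary.
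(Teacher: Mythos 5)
Your overall architecture (split off $\mathrm{NCH}^r$ via the zero-section, build a continuous $\mathbb{W}(k)$-action from operators on the extra $\mathbb{A}^1$-factor, then extract (a), (b) and homotopy invariance formally) matches the paper, and your operators $[a]$, $F_n$, $V_n$ repackage the paper's action of zero-cycles $\alpha\in z_0(\mathbb{A}^1\setminus\{0\},0)$ via $\alpha\cdot V=(\mu_\alpha\times\mathrm{id})_\ast(\alpha\times V)$: the class of $\mathrm{div}(1-au^n)$ acts exactly as a Verschiebung-type operator. Two remarks on your anticipated difficulties: the chain-level pushforward $V_n$ is not the hard part. The map $\mathrm{id}\times\phi_n$ is finite, right fine (it extends to $\mathbb{P}^1$), and minimal for the modulus $D\times\mathbb{A}^1$, so Proposition \ref{functoriality}(2) already gives a map of cubical groups compatible with the differentials; no moving argument is needed there.

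The genuine gap is in your continuity argument, which is the technical heart of the theorem. You assert that $V_n$ eventually annihilates a fixed class "as a consequence of how the modulus $D\times\mathbb{A}^1$ interacts with the pullbacks $\phi_n^{-1}$," but the modulus $D\times\mathbb{A}^1$ is constant in the $\mathbb{A}^1$-direction and satisfies $\phi_n^\ast(D\times\mathbb{A}^1)=D\times\mathbb{A}^1$, so it provides no decay whatsoever as $n$ grows. The actual mechanism (Proposition \ref{vanishing}) is different: one compactifies the cycle in the extra $\mathbb{A}^1$-direction, uses a quasi-compactness bound (Lemma \ref{Archimedean-property}, via Lemma \ref{colimit-lemma}) to find an integer $m$ such that the closure has pole order at most $m$ along the boundary divisor, and then for $f\in 1+u^mk[u]$ constructs an explicit homotopy --- built from the multiplication map $\mu$, the inversion $t\mapsto t^{-1}$, pushforward along the finite cover determined by $f$, and the cube-invariance identification $\Phi_q$ from Lemma \ref{rigidity} --- exhibiting $\mathrm{div}(f)\cdot Z$ as homologous to $\deg(f)\cdot\mathrm{pr}_1^\ast(i_0^\ast Z)$, which dies in the nilpotent quotient. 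Without this step you have an action of the polynomial operators but no module structure over $\mathbb{W}(k)$ itself, and the vanishing statements (a) and (b) --- which rest on the fact that each cyclic submodule factors through a truncated Witt ring $\mathbb{W}_m(k)$ (a $\mathbb{Z}/p^N$-algebra in characteristic $p$, a $k$-algebra in characteristic $0$) --- do not follow. You should supply this homotopy construction explicitly.
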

In the positive characteristic case, we can give a more direct proof of $\mathbb{A}^1$-homotopy invariance using Frobenius argument (see Remark \ref{direct-proof}), without using the nilpotent higher Chow group with modulus.

\ 

Finally, we would like to answer Q3.
In the following, we assume that the divisors $D$ are all {\it effective}.
The main goal of \S \ref{section-indep} is the following result.

\begin{thm}\label{main-4}
(Independence. See Corollary \ref{comparison-reduced})
Consider pairs $(X,D)$ and $(X,D')$ such that the supports $|D|$ and $|D'|$ coincide.
Let $A$ be an abelian group, and assume one of the following conditions:
\begin{itemize}
\item
The characteristic of $k$ is $p>0$ and $A$ is a $\mathbb{Z}[1/p]$-module.
\item
The characteristic of $k$ is zero and $A$ is a torsion abelian group.
\end{itemize}
Then, for any non-negative integer $r$, we have a canonical isomorphism
\[
\mathrm{CH}^r (X|D,\ast, A) \cong \mathrm{CH}^r (X|D',\ast, A).
\]
In other words, the higher Chow group with modulus with coefficients in $A$ is independent of the multiplicity of the divisors $D$.
\end{thm}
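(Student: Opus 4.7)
The plan is to reduce the theorem to a single key statement and then establish that statement using $\mathbb{A}^1$-homotopy invariance (Theorem~\ref{main-vanishing}). Since $|D|=|D'|$ and both divisors are effective, one can choose $n \ge 1$ with $D \le nD'$ and $D' \le nD$. A divisor inequality $E_1 \ge E_2$ gives an inclusion of cycle complexes $z^r(X|E_1,\ast) \hookrightarrow z^r(X|E_2,\ast)$ (a stronger modulus condition means a smaller group of admissible cycles), hence a natural comparison map on higher Chow groups. Composing such maps, the theorem reduces to showing that for every effective Cartier divisor $D$ on $X$ and every $n \ge 1$, the natural comparison
\[
\phi_n \colon \mathrm{CH}^r(X|nD,\ast,A) \longrightarrow \mathrm{CH}^r(X|D,\ast,A)
\]
is an isomorphism under our hypotheses on $\mathrm{char}(k)$ and $A$.

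To prove $\phi_n$ is an isomorphism, the strategy is to construct an inverse via $\mathbb{A}^1$-homotopy invariance. Writing $\pi \colon X \times \mathbb{A}^1 \to X$ for the projection, Theorem~\ref{main-vanishing} supplies an isomorphism $\pi^* \colon \mathrm{CH}^r(X|D,\ast,A) \xrightarrow{\cong} \mathrm{CH}^r(X \times \mathbb{A}^1 | D \times \mathbb{A}^1,\ast,A)$, and similarly with $D$ replaced by $nD$. The key observation is that, while every section of $\pi$ induces the same map $(\pi^*)^{-1}$ on Chow groups, at the cycle level one retains the freedom to replace a representative $V \in z^r(X|D,q)$ by an equivalent cycle $\widetilde{V} \in z^r(X \times \mathbb{A}^1 | D \times \mathbb{A}^1, q)$ representing the class $\pi^*[V]$ before specializing. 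By choosing $\widetilde{V}$ in general position with respect to the graph of a section $\gamma \colon X \to X \times \mathbb{A}^1$ that is tangent to the zero-section to order $n$ along $D$, the pullback $\gamma^*\widetilde{V}$ will satisfy the modulus $nD$ condition, and one sets $\phi_n^{-1}([V]) := [\gamma^*\widetilde{V}]$.

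The main obstacle is twofold. First is the moving-and-existence statement: for a given $V$, producing a representative $\widetilde{V}$ of $\pi^*[V]$ that meets the graph of $\gamma$ properly and such that the intersection cycle $\gamma^*\widetilde{V}$ genuinely enjoys the stronger modulus $nD$ condition. This is a moving-lemma argument in the spirit of Theorem~\ref{moving-lemma} and Corollary~\ref{special-moving-lemma}, reducing to a local computation at the generic points of $D$ that quantifies how the order-$n$ tangency of $\gamma$ strengthens the modulus condition under pullback. Second is well-definedness: different choices of $\widetilde{V}$, or of the tangent section $\gamma$, should yield the same class in $\mathrm{CH}^r(X|nD,\ast,A)$, which follows from a rigidity argument \emph{\`a la} Lemma~\ref{rigidity} together with an additional $\mathbb{A}^1$-homotopy connecting the auxiliary choices. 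In positive characteristic the whole argument admits a cleaner shortcut via the Frobenius morphism $F \colon X \to X$, which induces a coadmissible map $(X,pD) \to (X,D)$ whose pullback is related to the natural inclusion by multiplication by a power of $p$ --- as signalled by Remark~\ref{direct-proof} --- while characteristic zero requires the full Witt-module argument of Theorem~\ref{main-vanishing}.
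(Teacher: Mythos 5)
Your reduction is fine as far as it goes: choosing $n$ with $D\le nD'$ and $D'\le nD$ and using the inclusions of complexes, the theorem does follow once one knows that $z^r(X|nD,\ast,A)\hookrightarrow z^r(X|D,\ast,A)$ is a quasi-isomorphism, and that intermediate statement is true. The gap is in your proof of it. You propose to invert $\phi_n$ by representing $\pi^*[V]$ by a well-positioned cycle $\widetilde V\in z^r(\mathscr{X}\otimes\mathbb{A}^1,q,A)$ and pulling back along a section $\gamma(x)=(x,g(x))$ with $g$ vanishing to order $n$ along $D$, asserting that $\gamma^*\widetilde V$ then satisfies the modulus condition for $nD$. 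This cannot work: the modulus divisor of $\mathscr{X}\otimes\mathbb{A}^1$ is $D\times\mathbb{A}^1$, which is pulled back from $X$ and constant along the fibres of $\pi$, so its restriction to the graph of \emph{any} section is $D$ with multiplicity exactly one. The order of tangency of $\gamma$ to the zero section measures the position of $\Gamma_\gamma$ relative to $X\times\{0\}$, and $X\times\{0\}$ plays no role in the modulus condition for $(X\times\mathbb{A}^1,D\times\mathbb{A}^1)$; nothing in the construction produces the extra factor of $n$. Concretely, the containment lemma (Lemma \ref{containment-lemma}) applied to $\gamma^*\widetilde V\subset\widetilde V$ yields only the modulus-$D$ inequality, and in the language of Definition \ref{adm-coadm} no section is a coadmissible morphism $(X,nD)\to\mathscr{X}\otimes\mathbb{A}^1$ once $n\ge 2$, since coadmissibility would force $nD\le D$ on the closure of the graph. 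So the asserted inverse is never constructed. A secondary point: your Frobenius shortcut conflates Remark \ref{direct-proof}, which uses the $p^m$-th power map on the \emph{homotopy coordinate} to absorb multiplicity at $\{\infty\}\subset\mathbb{P}^1$, with a Frobenius on $X$ acting on the multiplicity of $D$ itself; the latter is a different, non-flat map for which the pullback on cycle complexes would still have to be defined.

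For contrast, the paper never trades multiplicity on $D$ by a geometric specialization. It introduces the na\"ive complex $z^r(\mathscr{X},\ast,A)'$, in which closures are taken in $\overline{X}\times\mathbb{A}^q$ and which for effective $D$ manifestly depends only on $|D|$ (Remark \ref{effective-case}), and proves $\mathrm{CH}^r(\mathscr{X},\ast,A)\cong\mathrm{CH}^r(\mathscr{X},\ast,A)'$ (Theorem \ref{comparison-general}) via a Bloch-style double complex whose row homology is controlled by $\mathbb{A}^1$-homotopy invariance and whose column homology by $\overline{\square}^{(-a)}$-invariance. If you want a direct argument, the multiplicity of $D$ has to enter through a mechanism like Lemma \ref{Archimedean-property}; that is exactly what the comparison with the na\"ive complex exploits.
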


This result is closely related to \cite[Theorem 2.7]{BCKS}, which states that if $X$ is proper over $k$, then the ``unipotent part" (=non-homotopcally invariant part) of
$\mathrm{CH}_0 (\mathscr{X})=\mathrm{CH}^{\dim X} (\mathscr{X},0)$ is $p$-primary torsion when $\mathrm{char}(k)=p>0$ and that it is divisible when $\mathrm{char}(k)=0$.
The group $\mathrm{CH}_0 (\mathscr{X})=\mathrm{CH}_0 (X|D)$ is called the Chow group of zero cycles with modulus, and it is used by M. Kerz and S. Saito in \cite{KeS} to give a cycle-theoretic description of the \'{e}tale fundamental group of the interior $X \setminus |D|$.

The strategy of the proof of the above theorem is to compare both sides of the isomorphism with an intermediate abelian group called the ``na\"{i}ve" higher Chow group with modulus, which is {\it a priori} independent of the multiplicity of the divisors.
We use $\mathbb{A}^1$-homotopy invariance (Theorem \ref{main-vanishing}) for the comparison.
We remark that we also obtain an independence result (Theorem \ref{comparison-reduced-coh}) for {\it relative motivic cohomology groups} $H_{\mathcal{M}}^\ast (\mathscr{X},\mathbb{Z}(r))$, introduced in \cite{BS} (see Definition \ref{relative-mot}), which is defined as the Nisnevich hypercohomology group of the sheaf of the cycle complexes with modulus.

\ 

Moreover, supposing that $D \leq D'$, we study the structure of the ``difference" between the groups $\mathrm{CH}^r (X|D,\ast)$ and $\mathrm{CH}^r (X|D',\ast)$.
Precisely speaking, we consider the ``cycle complexes with modulus" $z^r (X|D,\ast)$ and $z^r (X|D',\ast)$, whose homology groups are by definition the higher Chow groups with modulus $\mathrm{CH}^r (X|D,\ast)$ and $\mathrm{CH}^r (X|D',\ast)$, respectively.
We have a canonical inclusion of complexes $z^r (X|D',\ast) \subset z^r (X|D,\ast)$.
Then, we have the following result.

\begin{thm}(See Corollary \ref{difference-str})
Consider pairs $(X,D)$ and $(X,D')$ such that $|D|=|D'|$ and $D \leq D'$.
Then, for any integer $q$, the homology group $\mathrm{H}_q \left( \frac{z^r (X|D,\ast )}{z^r (X|D',\ast )} \right)$ of the quotient complex is a $p$-group if the characteristic of $k$ is $p>0$, and is a $\mathbb{Q}$-vector space if the characteristic of $k$ is zero.
\end{thm}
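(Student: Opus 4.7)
The plan is to extract structural information on $\mathrm{H}_q\!\left( z^r(X|D,\ast)/z^r(X|D',\ast) \right)$ from the independence result (Theorem \ref{main-4}) by running a long exact sequence argument with well-chosen coefficients.

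First I would consider the short exact sequence of complexes of abelian groups
\[
0 \to z^r(X|D',\ast) \xrightarrow{\iota} z^r(X|D,\ast) \to Q_\ast \to 0, \qquad Q_\ast := z^r(X|D,\ast)/z^r(X|D',\ast).
\]
Each term of every cycle complex is a free abelian group (generated by admissible cycles), so this sequence remains exact after tensoring with any abelian group $A$, and the universal coefficient formula is available. Writing the associated homology long exact sequence and using the definition $\mathrm{CH}^r(X|?,\ast,A)=\mathrm{H}_\ast(z^r(X|?,\ast)\otimes A)$, one gets
\[
\cdots \to \mathrm{CH}^r(X|D',q,A) \xrightarrow{\iota_\ast} \mathrm{CH}^r(X|D,q,A) \to \mathrm{H}_q(Q_\ast \otimes A) \to \mathrm{CH}^r(X|D',q-1,A) \to \cdots .
\]

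Next I would invoke Theorem \ref{main-4} with appropriate coefficients. In characteristic $p>0$, choosing $A=\mathbb{Z}[1/p]$ makes $\iota_\ast$ an isomorphism, so the long exact sequence forces $\mathrm{H}_q(Q_\ast)\otimes \mathbb{Z}[1/p]\cong \mathrm{H}_q(Q_\ast\otimes \mathbb{Z}[1/p])=0$; thus every element of $\mathrm{H}_q(Q_\ast)$ is annihilated by a power of $p$, i.e., $\mathrm{H}_q(Q_\ast)$ is a $p$-group. In characteristic zero, for every torsion abelian group $A$ the map $\iota_\ast$ is an isomorphism, so $\mathrm{H}_q(Q_\ast\otimes A)=0$. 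Applying the universal coefficient exact sequence with $A=\mathbb{Z}/n$ yields both $\mathrm{H}_q(Q_\ast)/n=0$ and $\mathrm{H}_{q-1}(Q_\ast)[n]=0$ for every $n\ge 1$; hence $\mathrm{H}_q(Q_\ast)$ is divisible and torsion-free, i.e., a $\mathbb{Q}$-vector space.

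The main obstacle is subtler than it looks: one must verify that the abstract isomorphism supplied by Theorem \ref{main-4} (via the intermediate naïve higher Chow group with modulus described in the introduction) is induced, after base change to $A$, by the natural inclusion $\iota$ of cycle complexes. Because the naïve version receives compatible maps from both $z^r(X|D,\ast)$ and $z^r(X|D',\ast)$, this compatibility should follow by inspection of the construction in \S \ref{section-indep}; nonetheless, tracking the maps through that construction is the only nontrivial bookkeeping in the proof, after which the homological algebra above runs automatically.
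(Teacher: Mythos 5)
Your proposal is correct and follows essentially the same route as the paper's proof of Corollary \ref{difference-str}: both arguments rest on the fact that with $\mathbb{Z}[1/p]$ (resp.\ torsion) coefficients the inclusion $z^r(X|D',\ast,A)\hookrightarrow z^r(X|D,\ast,A)$ becomes a quasi-isomorphism, so the quotient complex is acyclic with those coefficients; the paper then tensors the quotient directly with the flat module $\mathbb{Z}[1/p]$ in case (1), and in case (2) uses the sequence $0\to\mathbb{Z}\to\mathbb{Q}\to\mathbb{Q}/\mathbb{Z}\to 0$ to identify $\mathrm{H}_q(Q_\ast)$ with $\mathrm{H}_q(Q_\ast)\otimes\mathbb{Q}$, which is equivalent to your universal-coefficient argument over all $\mathbb{Z}/n$. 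The compatibility point you flag — that the isomorphism of Theorem \ref{main-4} is induced by the inclusion of complexes — is indeed the one thing to verify, and the paper relies on it in exactly the same way (it reads Corollary \ref{comparison-reduced} as saying the quotient complex $M_\ast(\mathscr{X}',\mathscr{X},A)$ is acyclic); this holds because in the proof of Theorem \ref{comparison-general} both comparison maps are edge maps of the double complex $\mathcal{Z}(\ast,\ast)_0$ given by the natural inclusions of the bottom row and left column, so the resulting isomorphisms are compatible with the inclusions of cycle complexes.
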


\section{The higher Chow group with modulus}

\subsection{Preliminaries on modulus pairs}

We recall the notion of modulus pairs.
In the introduction, a modulus pair is denoted by $\mathscr{X}=(X,D)$, but in the following, it is denoted by $\mathscr{X}=(\overline{X},X^\infty )$.


\begin{dfn}\label{modulus-pair}
A {\it modulus pair} is a pair $\mathscr{X} = (\overline{X},X^\infty )$ such that 
\begin{itemize}
\item[(1)] $\overline{X}$ is a separated, equidimensional scheme essentially of finite type over $k$,

\item[(2)] $X^\infty $ is a Cartier divisor on $\overline{X}$ (the case $|X^\infty |=\emptyset $ is allowed).

\end{itemize}
We call $\mathscr{X}^\circ :=\overline{X} - |X^\infty |$ the {\it interior} of $\mathscr{X}$.
We say that $\mathscr{X}$ is {\it effective} if $X^\infty $ is effective. 
\end{dfn}

The following lemmas are very important in the modulus theory, and will be used frequently in the rest of the paper.

\begin{lem}\label{effectivity-criterion}
Let $f : Y \to X$ be a surjective map of normal integral $k$-schemes.
Let $D$ be a (Cartier) divisor on $X$.
Then, the pullback $f^\ast (D)$ of $D$ is effective if and only if $D$ is effective.
\end{lem}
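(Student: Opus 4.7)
The plan is to treat the two implications separately. The forward implication is essentially formal: if $D$ is effective, then locally on $X$ the divisor is cut out by a regular function $t \in \mathcal{O}_X$, and its pullback $f^{\#}t \in \mathcal{O}_Y$ is again regular (and non-zero because $f^{\#}$ is injective on rational functions, by dominance). Thus $f^{\ast}D$ is locally given by a regular function, so it is effective.

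For the converse I would pass to the Weil-divisor description, which is legitimate because both $X$ and $Y$ are normal. Write $D = \sum_i n_i W_i$ with $W_i = \overline{\{\xi_i\}}$ the prime divisors appearing in $D$ and $n_i = v_{\xi_i}(D) \in \mathbb{Z}$, so that effectivity of $D$ is equivalent to the requirement $n_i \geq 0$ for every $i$. Fix one $i$ and a local equation $t$ of $D$ at $\xi_i$, so $v_{\xi_i}(t) = n_i$. The key reduction is to produce a codimension-one point $\eta \in Y$ with $f(\eta) = \xi_i$. Given such an $\eta$, the map $\mathcal{O}_{X,\xi_i} \to \mathcal{O}_{Y,\eta}$ is a local homomorphism of discrete valuation rings, and comparison of uniformizers yields a positive ramification index $e$ with $v_\eta \circ f^{\#} = e \cdot v_{\xi_i}$ on $K(X)^{\times}$. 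The assumption that $f^{\ast}D$ is effective then gives $0 \leq v_\eta(f^{\#}t) = e n_i$, forcing $n_i \geq 0$.

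The crux is the existence of such $\eta$. Because $f$ is surjective, $f^{-1}(W_i)$ is a non-empty proper closed subset of $Y$; among its finitely many irreducible components, at least one component $Z$ satisfies $\overline{f(Z)} = W_i$ since $W_i$ is irreducible. The dimension formula for a dominant morphism of integral schemes essentially of finite type over $k$ then yields
\[
\dim Z \geq \dim Y - \dim X + \dim W_i = \dim Y - 1,
\]
while $Z \subsetneq Y$ forces $\dim Z \leq \dim Y - 1$. Hence $Z$ has codimension exactly one, and its generic point $\eta$ satisfies $\overline{\{f(\eta)\}} \supseteq \overline{f(Z)} = W_i$, so $f(\eta) = \xi_i$, as required.

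The main obstacle is precisely this dimensional step: one must invoke the dimension formula correctly and confirm that the chosen generic point really does lie over $\xi_i$ rather than over some strictly smaller specialization. Once the codimension-one lift is in hand, the rest is a routine valuation computation.
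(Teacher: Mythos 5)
Your proof is correct. Note that the paper itself does not prove this lemma: the ``if'' direction is declared trivial and the ``only if'' direction is delegated to \cite{KP}, Lemma 2.2, so the real comparison is with that reference. Your argument is a valid rendition of the standard proof --- normality of $X$ reduces effectivity to non-negativity of the multiplicities $n_i$ at the height-one points $\xi_i$, and you produce a height-one point of $Y$ over each $\xi_i$ in order to compare valuations --- but the hardest step, the codimension-one lift via the fiber-dimension inequality, is avoidable. Any point $y \in f^{-1}(\xi_i)$ (nonempty by surjectivity) already suffices: write the local equation of $D$ at $\xi_i$ as $u\pi^{n_i}$ with $\pi$ a uniformizer and $u$ a unit of the discrete valuation ring $\mathcal{O}_{X,\xi_i}$; effectivity of $f^{\ast}D$ forces $f^{\#}(u)\,f^{\#}(\pi)^{n_i} \in \mathcal{O}_{Y,y}$, hence $f^{\#}(\pi)^{n_i} \in \mathcal{O}_{Y,y}$, and if $n_i<0$ this would make $f^{\#}(\pi)$ a unit of the local ring $\mathcal{O}_{Y,y}$, contradicting the fact that $\mathcal{O}_{X,\xi_i}\to\mathcal{O}_{Y,y}$ is a local homomorphism of local domains. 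This bypasses the dimension formula, the finiteness of the set of components of $f^{-1}(W_i)$, and even the normality of $Y$. If you do keep your route, two small cautions: the fiber-dimension inequality and the finiteness of components need the (essentially) finite-type hypotheses implicit in the paper's setting, and for schemes that are only \emph{essentially} of finite type over $k$ the inequality is more robustly phrased in terms of codimension, $\mathrm{codim}_Y Z \le \mathrm{codim}_X W_i$, than of dimension. The remaining valuation computation, and the ``if'' direction, are fine as written.
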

\begin{proof}
The ``if part'' is trivial.
The ``only if part'' is  \cite[Lemma 2.2]{KP}.
 \end{proof}

\begin{lem}\label{containment-lemma} (Containment lemma)
Let $\mathscr{X}=(\overline{X},X^\infty )$ be a modulus pair. Let $V \subset \mathscr{X}^\circ = \overline{X} - |X^\infty |$ be an integral closed subscheme, and let $V' \subset V$ be an integral closed subscheme of $V$. 
Let $\overline{V} \subset \overline{X}$ be the closure of $V$ in $X$, and let $\overline{V}^N$ be its normalization.
Denote by $\nu_V$ the composite map $\overline{V}^N \xrightarrow{} \overline{V} \xrightarrow{\mathrm{incl.}} \overline{X}$.
Define the map $\nu_{V'} : \overline{V'}^N \to  \overline{V'} \hookrightarrow \overline{X}$ similarly.
Assume that $X^\infty |_{\overline{V}^N } := \nu_V^\ast X^\infty$ is effective.
Then, the divisor $X^\infty |_{\overline{V'}^N } := \nu_{V'}^\ast X^\infty$ is also effective.
\end{lem}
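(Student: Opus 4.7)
The plan is to deduce effectivity of $\nu_{V'}^\ast X^\infty$ on $\overline{V'}^N$ from the assumed effectivity of $\nu_V^\ast X^\infty$ on $\overline{V}^N$ by building an auxiliary normal integral scheme that sits over both and then applying Lemma \ref{effectivity-criterion}. Before anything else I would verify that $\nu_{V'}^\ast X^\infty$ is even defined as a Cartier divisor: since $V' \subset V \subset \mathscr{X}^\circ = \overline{X} \setminus |X^\infty|$, the subscheme $V'$ is disjoint from $|X^\infty|$, and as $V'$ is dense in $\overline{V'}$, the closure $\overline{V'}$ is not contained in $|X^\infty|$; hence neither is its normalization, so $\nu_{V'}^\ast X^\infty$ makes sense.

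The core construction is as follows. I would consider the scheme-theoretic preimage $\nu_V^{-1}(\overline{V'}) = \overline{V}^N \times_{\overline{V}} \overline{V'}$, viewed as a closed subscheme of $\overline{V}^N$. Because $\nu_V$ is finite and surjective, this preimage has an irreducible component $W$ (taken with reduced structure) that dominates $\overline{V'}$, and the induced map $W \to \overline{V'}$ is finite surjective between integral schemes. Let $W^N$ be the normalization of $W$. The universal property of normalization lifts the dominant map $W^N \to W \to \overline{V'}$ uniquely to a morphism $W^N \to \overline{V'}^N$, which is finite surjective (since its composition with $\overline{V'}^N \to \overline{V'}$ is). On the other side, the closed immersion $W \hookrightarrow \overline{V}^N$ gives a morphism $W^N \to \overline{V}^N$.

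The crucial observation is that the two induced composites
\[
W^N \longrightarrow \overline{V}^N \xrightarrow{\ \nu_V\ } \overline{X}, \qquad W^N \longrightarrow \overline{V'}^N \xrightarrow{\ \nu_{V'}\ } \overline{X}
\]
coincide: both factor through $\overline{V}$, and the two maps $W^N \to \overline{V}$ agree by the fiber-product description of $W$ together with the construction of $W^N \to \overline{V'}^N$ via the universal property. Hence pulling back $X^\infty$ to $W^N$ along either path yields the same divisor. Along the first path it equals the pullback of the effective divisor $\nu_V^\ast X^\infty$, so it is effective; along the second path it equals the pullback of $\nu_{V'}^\ast X^\infty$ along the finite surjective morphism $W^N \to \overline{V'}^N$ of normal integral schemes. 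Lemma \ref{effectivity-criterion} then descends effectivity from $W^N$ to $\overline{V'}^N$, finishing the proof.

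The step I anticipate to be most delicate is the commutativity of the two composites $W^N \rightrightarrows \overline{X}$: one must be careful because $\nu_V^{-1}(\overline{V'})$ may be reducible and non-reduced, the map $W^N \to \overline{V'}^N$ is defined only indirectly through the universal property, and the diagram must be checked at the level of schemes (not merely set-theoretically) to ensure the pulled-back Cartier divisors genuinely agree. Once that compatibility is in hand, the rest is a routine combination of universal properties of normalization and the effectivity criterion already stated.
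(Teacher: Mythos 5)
Your argument is correct and is essentially the same as the one the paper relies on (the paper simply cites \cite[Lemma 2.1]{BS} and \cite[Proposition 2.4]{KP}, whose proof proceeds exactly by forming $\overline{V}^N\times_{\overline{V}}\overline{V'}$, picking a component finite surjective over $\overline{V'}$, normalizing, lifting to $\overline{V'}^N$ by the universal property, and descending effectivity via Lemma \ref{effectivity-criterion}). No gaps.
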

\begin{proof}
See  \cite[Lemma 2.1]{BS} or \cite[Proposition 2.4]{KP}.
 \end{proof}


Finally, we introduce the tensor product of modulus pairs. 
It will be used in the formulation of the cube invariance.

\begin{dfn}\label{prod}
For modulus pairs $\mathscr{X}=(\overline{X},X^\infty ),\mathscr{Y}=(\overline{Y},Y^\infty )$ over $k$, we define 
\[
\mathscr{X} \otimes \mathscr{Y} := (\overline{X} \times \overline{Y} , X^\infty \times \overline{Y} + \overline{X} \times Y^\infty )
\]
and call it 
the {\it tensor product} of 
$\mathscr{X}$ and $\mathscr{Y}$.
\end{dfn}


\begin{prop}\label{support-prod}
For $\mathscr{X},\mathscr{Y}$ as above, we have
$
(\mathscr{X} \otimes \mathscr{Y})^\circ = \mathscr{X}^\circ \times \mathscr{Y}^\circ .
$
\end{prop}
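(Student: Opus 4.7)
The plan is to unwind the definitions and reduce the statement to a pointwise equality of supports. By the definitions of the interior and of the tensor product,
\[
(\mathscr{X}\otimes\mathscr{Y})^\circ = (\overline{X}\times\overline{Y}) \setminus |X^\infty \times \overline{Y} + \overline{X}\times Y^\infty|,
\]
and on the other hand
\[
\mathscr{X}^\circ \times \mathscr{Y}^\circ
= (\overline{X}\times\overline{Y}) \setminus \bigl((|X^\infty|\times \overline{Y}) \cup (\overline{X}\times |Y^\infty|)\bigr).
\]
So the proposition reduces to the equality of closed subsets
\[
|X^\infty \times \overline{Y} + \overline{X}\times Y^\infty| = (|X^\infty|\times \overline{Y}) \cup (\overline{X}\times |Y^\infty|).
\]

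First I would verify this identity locally. Fix a point $(x,y) \in \overline{X}\times\overline{Y}$ and pick local equations $f_1 \in \mathcal{K}_{\overline{X},x}^{*}$ for $X^\infty$ near $x$ and $f_2 \in \mathcal{K}_{\overline{Y},y}^{*}$ for $Y^\infty$ near $y$. Pulling back along the two projections $p_1,p_2$, the Cartier divisor $X^\infty \times \overline{Y} + \overline{X}\times Y^\infty$ is represented near $(x,y)$ by the product $p_1^{*}f_1 \cdot p_2^{*}f_2$. By definition, $(x,y)$ lies in its support iff this product fails to be a unit in the local ring $\mathcal{O}_{\overline{X}\times\overline{Y},(x,y)}$.

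Next, I would establish the key local statement: $p_1^{*}f_1 \cdot p_2^{*}f_2$ is a unit in $\mathcal{O}_{\overline{X}\times\overline{Y},(x,y)}$ if and only if $f_1$ is a unit in $\mathcal{O}_{\overline{X},x}$ and $f_2$ is a unit in $\mathcal{O}_{\overline{Y},y}$. Writing $f_i = g_i/h_i$ with $g_i,h_i$ regular near the respective points, this follows from the elementary observation that a regular function of the form $p_1^{*}g_1 \cdot p_2^{*}g_2$ vanishes at $(x,y)$ exactly when $g_1(x) = 0$ or $g_2(y) = 0$, applied to both numerator and denominator of $p_1^{*}f_1 \cdot p_2^{*}f_2$. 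Taking complements in $\overline{X}\times\overline{Y}$ then yields the required equality of supports.

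The only subtle point, and what I would flag as the main obstacle, is the potential worry about cancellation in the sum of Cartier divisors when $X^\infty$ or $Y^\infty$ is allowed to be non-effective (so that in general one only has $|D_1+D_2|\subseteq |D_1|\cup|D_2|$). The argument above circumvents this precisely because $X^\infty \times \overline{Y}$ and $\overline{X}\times Y^\infty$ are pulled back from \emph{different} factors of the product: their local defining equations involve disjoint sets of variables, so cancellation $p_1^{*}f_1 \cdot p_2^{*}f_2 \in \mathcal{O}^{*}$ forces $f_1$ and $f_2$ separately to be units, and the inclusion of supports is in fact an equality.
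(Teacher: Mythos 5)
Your reduction of the statement to the equality of supports
\[
|X^\infty \times \overline{Y} + \overline{X}\times Y^\infty| = (|X^\infty|\times \overline{Y}) \cup (\overline{X}\times |Y^\infty|)
\]
and the identification of the local equation of the sum as $p_1^{*}f_1\cdot p_2^{*}f_2$ are exactly as in the paper, and the easy inclusion (``$f_1,f_2$ units $\Rightarrow$ $p_1^{*}f_1\cdot p_2^{*}f_2$ a unit'') is fine. The gap is in the converse implication, which is the entire content of the proposition. Whether $F=p_1^{*}f_1\cdot p_2^{*}f_2$ is a unit in $\mathcal{O}_{\overline{X}\times\overline{Y},(x,y)}$ is a property of the rational function $F$, not of the chosen presentation $F=N/D$ with $N=p_1^{*}g_1\,p_2^{*}g_2$ and $D=p_1^{*}h_1\,p_2^{*}h_2$: nonvanishing of $N$ and $D$ at $(x,y)$ implies $F$ is a unit, but the converse fails (already $s/s=1$ is a unit whose numerator and denominator both vanish), and likewise ``$f_1\in\mathcal{O}_{\overline{X},x}^{*}$'' is not equivalent to ``$g_1(x)\neq 0$ and $h_1(x)\neq 0$'' for the particular $g_1,h_1$ you wrote down. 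Concretely, all that your elementary observation extracts from $N=uD$ with $u\in\mathcal{O}^{*}$ is the relation $g_1(x)g_2(y)=u(x,y)h_1(x)h_2(y)$ in the residue field, which is consistent with, say, $g_1(x)=h_2(y)=0$ while $h_1(x),g_2(y)\neq 0$, and does not force $f_1$ to be a unit. The ``disjoint sets of variables'' remark is the correct intuition, but it is precisely the point that requires proof. Your argument is complete only when $h_i=1$, i.e.\ for effective divisors; the proposition is needed in the paper precisely for non-effective pairs such as $\mathscr{X}\otimes\overline{\square}^{(-1)}$.

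The paper closes this gap by a different route: since both sides are closed, it suffices to show that each \emph{generic} point $\eta$ of $|X^\infty|\times\overline{Y}$ lies in the support; such $\eta$ projects to a generic point of $\overline{Y}$, in a neighborhood of which $Y^\infty$ is the trivial divisor, so the factor $f_2$ disappears; one is then reduced to the single statement $|\mathrm{pr}_1^{*}(X^\infty)|=\mathrm{pr}_1^{-1}(|X^\infty|)$ for the flat projection, which is proved by descending invertibility along a faithfully flat morphism ($\mathcal{O}^{\times}$ is a sheaf for the fpqc topology). Some input of this kind --- flat descent of units, or a specialization argument that kills one of the two factors --- is unavoidable in the non-effective case; to repair your proof you should either reproduce that reduction or prove directly that a product of units-candidates pulled back from the two factors can only be a unit if each factor already is.
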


\begin{proof}
It suffices to prove that \[|X^\infty \times \overline{Y} + \overline{X} \times Y^\infty |=|X^\infty | \times \overline{Y} \cup \overline{X} \times |Y^\infty |.\]
The inclusion $\subset$ is immediate from the definition of support.
To prove the inclusion $\supset$, by symmetry, it is enough to show that  $|X^\infty | \times \overline{Y} \subset |X^\infty \times \overline{Y} + \overline{X} \times Y^\infty |.$
Since both sides are closed subsets of $\overline{X} \times \overline{Y}$, we are reduced to proving that any generic point $\eta \in |X^\infty | \times \overline{Y}$ belongs to $ |X^\infty \times \overline{Y} + \overline{X} \times Y^\infty |$.
Consider the projection $\mathrm{pr}_2 : |X^\infty | \times \overline{Y} \to \overline{Y}$ and set $\overline{\eta} := \mathrm{pr}_2 (\eta ) \in \overline{Y}$.
Since $\mathrm{pr}_2$ is flat, $\overline{\eta}$ is also generic.
By shrinking $\overline{Y}$ around $\overline{\eta}$, we may assume that $Y^\infty = 0$.
By flatness of $\mathrm{pr}_2$,
we obtain $|X^\infty | \times_k \overline{Y} = |X^\infty \times_k \overline{Y}| = |X^\infty \times \overline{Y} + \overline{X} \times Y^\infty |$.
This finishes the proof.
 \end{proof}


\subsection{Definition of the higher Chow group with modulus}

First, we recall the definition of ``cubical" higher Chow group in \cite[\S 4]{B2}.
Let $X$ be an equidimensional scheme of finite type over $k$.
For any integer $r,q \geq 0$, let $\underline{z}^r (X,q)$ denote the free abelian group generated on the following set:
\[
\Set{V | \parbox{95mm}{$V$ is an irreducible closed subset of $X \times \square^q $ of codimension $r$\\ which intersects $X \times F$ properly for any face $F \subset \square^q $}}.
\]
For any $i = 1,\dots q$ and $\epsilon = 0,1$, 
the face map 
\[
d_i^\epsilon : \underline{z}^r (X,q) \to \underline{z}^r (X,q-1)
\]
and the degeneracy map
\[
\pi _i : \underline{z}^r (X,q) \to \underline{z}^r (X,q-1)
\]
are defined as the pullback of algebraic cycles along the inclusion
\begin{align*}
\delta_i^\epsilon &: \square^{q-1} \to \square^q \\
\delta_i^\epsilon (x_1 ,\dots ,x_q ) &= (x_1 ,\dots ,x_{i-1} ,\epsilon ,x_{i+1} ,\dots ,x_q )
\end{align*}
and the projection
\begin{align*}
p_i &: \square^{q} \to \square^{q-1} \\
p_i (x_1 ,\dots ,x_q ) &= (x_1 ,\dots ,\hat{x}_i ,\dots ,x_q )
\end{align*}
respectively.
Then, $\{\underline{z}^r (X,\ast ) ,d^\epsilon_i ,\pi_i \}$ defines a cubical abelian group i.e. a functor 
\[
\mathbf{Cube}^{\mathrm{op}} \to \mathbf{Ab}, \ \underline{q} \mapsto \underline{z}^r(\mathscr{X},q),
\]
where $\mathbf{Ab}$ is the category of abelian groups, and $\mathbf{Cube}$ is the subcategory of $\mathbf{Sets}$ such that the objects are $\underline{q} = \{0,1\}^q , q=0,1,2,\dots $, and the morphisms are generated by the following.
\begin{itemize}
\item[1.] Inclusions: $\delta_i^\epsilon : \underline{q-1} \to \underline{q}, i=1,\dots ,q, \epsilon = 0,1$
\[
\delta_i^\epsilon (\epsilon_1 ,\dots ,\epsilon_q ) = (\epsilon_1 ,\dots ,\epsilon_{i-1} ,\epsilon ,\epsilon_{i+1} ,\dots ,\epsilon_q )
\]
\item[2.] Projections: $p_i : \underline{q} \to \underline{q-1}, i=1,\dots ,n$
\[
p_i (\epsilon_1 ,\dots ,\epsilon_q ) = (\epsilon_1 ,\dots ,\hat{\epsilon }_i ,\dots ,\epsilon_q )
\]
\item[3.]
Permutation of factors: $(\epsilon_1 ,\dots ,\epsilon_q ) \mapsto (\epsilon_{\sigma (1)} ,\dots ,\epsilon_{\sigma (q)} ), \sigma \in S_n$
\item[4.]
Involutions: the maps exchanging $0$ and $1$ at each factor
\end{itemize}
(see  \cite[\S 1.1]{KL}, \cite[\S 1]{Levine} for details).
The following generality on cubical objects is important:

\begin{lem}\label{normalization} (See \cite[Lemma 1.2]{KL})
Let $\mathcal{A}$ be an abelian category and let 
$\underline{A} = \{\underline{A} (\ast ) ,d^\epsilon_i ,\pi_i \}$ be a cubical object in $\mathcal{A}$.
Set 
\begin{align*}
\underline{A} (q)_0
:= \bigcap_{i=1}^q \mathrm{Ker} (d_i^0 ) \subset \underline{A}(q), \ \ 
\underline{A} (q)_{\mathrm{degn}}
:= \sum_{i=1}^q \mathrm{Im}(\pi_i )  \subset \underline{A}(q).
\end{align*}
Define a map $d_q : \underline{A} (q) \to \underline{A} (q)$ as the alternating sum
\[
d_q := \sum_{i=1}^q (-1)^i (d_i^1 - d_i^0 ).
\]
Then, $(\underline{A} (\ast ) ,d_\ast )$ defines a (homological) complex of abelian groups.
Moreover, $d_q$ induces maps on the subgroups: 
\[
\underline{A} (q)_0 \to \underline{A} (q-1)_0 ,\ \ \underline{A} (q)_{\mathrm{degn}} \to \underline{A} (q-1)_{\mathrm{degn}} .
\]
Therefore, we obtain two subcomplexes \[(\underline{A} (\ast )_0 ,d_\ast ) \subset (\underline{A} (\ast ) ,d_\ast ),\ \  (\underline{A} (\ast )_\mathrm{degn} ,d_\ast ) \subset (\underline{A} (\ast ) ,d_\ast )\]
called the {\it reduced subcomplex} and the {\it degenerate subcomplex}, respectively.
We have a canonical splitting of complexes:
\[
\underline{A}(\ast ) = \underline{A}(\ast )_0  \oplus \underline{A} (\ast )_{\mathrm{degn}}.
\]
In particular, the quotient complex 
\[
A (\ast ) := \underline{A}(\ast  ) / \underline{A}(\ast )_{\mathrm{degn}}
\]
is canonically isomorphic to $\underline{A}(\ast )_0 $.
\end{lem}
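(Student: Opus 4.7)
The plan is to reduce all four assertions (chain complex, stability of the two subcomplexes, direct sum splitting, identification of the quotient) to the cubical identities among the face and degeneracy maps. The crucial ones are the right-inverse relation $d_i^\epsilon \pi_i = \mathrm{id}$ ($\epsilon = 0,1$), the face commutation $d_j^{\epsilon'} d_i^\epsilon = d_i^\epsilon d_{j-1}^{\epsilon'}$ for $i < j$, and the mixed relations $d_i^\epsilon \pi_j = \pi_{j-1} d_i^\epsilon$ for $i < j$ and $d_i^\epsilon \pi_j = \pi_j d_{i-1}^\epsilon$ for $i > j$ (cubical analogues of the simplicial identities, recorded in \cite[\S 1.1]{KL} and \cite[\S 1]{Levine}). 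Everything below is a mechanical consequence of these.

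First I verify $d_{q-1} \circ d_q = 0$ by the standard alternating-sum manipulation: after expansion, each cross term $d_j^{\epsilon'} d_i^\epsilon$ with $i < j$ has a partner $d_i^\epsilon d_{j-1}^{\epsilon'}$ produced by the face commutation relation, and the pair cancels thanks to the opposing signs $(-1)^i(-1)^j$ versus $(-1)^{j-1}(-1)^i$. For stability of $\underline{A}(q)_0$: given $x$ with $d_i^0 x = 0$ for all $i$, I apply $d_j^0$ to $d_q x$ and rewrite each $d_j^0 d_i^\epsilon x$ via the face commutation rule so as to expose some $d_{?}^0$ acting on $x$; the one apparently problematic case $d_j^0 d_j^1$ is handled by the derived identity $d_j^0 d_j^1 = d_j^1 d_{j+1}^0$, which again annihilates $x$. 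Stability of $\underline{A}(q)_{\mathrm{degn}}$ follows because for $x = \pi_j y$ the $i=j$ summand of $d_q x$ is $(-1)^j (d_j^1 - d_j^0)\pi_j y = (-1)^j(y - y) = 0$ by the right-inverse relation, while for $i \ne j$ the mixed commutation rules $d_i^\epsilon \pi_j = \pi_{j-1} d_i^\epsilon$ or $\pi_j d_{i-1}^\epsilon$ immediately rewrite the summand as an element of $\mathrm{Im}(\pi_{j-1}) + \mathrm{Im}(\pi_j) \subset \underline{A}(q-1)_{\mathrm{degn}}$.

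For the splitting, I construct an explicit idempotent projector. Each $e_i := \pi_i d_i^0$ is an idempotent endomorphism of $\underline{A}(q)$ (using $d_i^0 \pi_i = \mathrm{id}$) with image $\mathrm{Im}(\pi_i)$ and kernel $\ker(d_i^0)$. Iterating, the composite $P_q := (1-e_1)(1-e_2) \cdots (1-e_q)$ will be shown by induction on $q$ to be an idempotent whose image is $\bigcap_i \ker(d_i^0) = \underline{A}(q)_0$ and whose kernel is $\sum_i \mathrm{Im}(\pi_i) = \underline{A}(q)_{\mathrm{degn}}$, producing the desired direct sum decomposition in $\mathcal{A}$; since both summands were already shown to be $d_q$-stable, the decomposition descends to complexes, and the canonical isomorphism $\underline{A}(\ast)_0 \xrightarrow{\cong} A(\ast)$ follows by composing the inclusion with the projection. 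The main obstacle I anticipate is the inductive verification that $P_q$ has the claimed image and kernel: the commutation of $(1-e_i)$ with $(1-e_j)$ is controlled but nontrivial and requires combining the $d\pi$-commutation with the $\pi\pi$-commutation to show that one can re-order the factors so that each $(1-e_i)$ kills the corresponding $\mathrm{Im}(\pi_i)$ while preserving $\ker(d_{i'}^0)$ for the remaining indices. Modulo this combinatorial bookkeeping, everything is a direct application of the cubical identities recalled at the outset.
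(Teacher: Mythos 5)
Your argument is correct and is essentially the standard proof of this lemma (the paper itself offers no proof, deferring to \cite[Lemma 1.2]{KL}, where the same strategy via the idempotents $e_i = \pi_i d_i^0$ is used). The one obstacle you flag at the end in fact dissolves: the mixed identities $d_i^0\pi_j = \pi_{j-1}d_i^0$ ($i<j$), $d_j^0\pi_i = \pi_i d_{j-1}^0$ ($i<j$), together with $\pi_i\pi_{j-1} = \pi_j\pi_i$ and $d_i^0 d_j^0 = d_{j-1}^0 d_i^0$, show that the $e_i$ commute on the nose, so $P_q = \prod_i(1-e_i)$ is a genuine idempotent with image $\bigcap_i\ker(d_i^0)$ and kernel $\sum_i\mathrm{Im}(\pi_i)$ with no further combinatorial bookkeeping required.
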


Since $\underline{z}^r (X,\ast )$ is a cubical abelian group, we obtain subcomplexes 
\[
\underline{z}^r (X,\ast )_{\mathrm{degn}} \text{ and } z^r (X,\ast )_0 .
\]
An element of $\underline{z}^r (X,\ast )_{\mathrm{degn}}$ is called {\it a degenerate cycle}.
The {\it cubical cycle complex} is defined as the quotient complex
\[
z^r (X,\ast ) := \underline{z}^r (X,\ast ) / \underline{z}^r (X,\ast )_{\mathrm{degn}} ,
\]
which is canonically isomorphic to the reduced subcomplex $z^r (X,\ast )_0$.
We define the {\it cubical higher Chow group} of $X$ by
\[
\mathrm{CH}^r (X,q) := \mathrm{H}_q (z^r (X,\ast ) ) \cong \mathrm{H}_q (\underline{z}^r (X,\ast )_0 ).
\]

\begin{rem}
The cubical higher Chow group is canonically isomorphic to the usual (simplicial) higher Chow group (\cite[Theorem 4.3]{B2}).
\end{rem}

We define the higher Chow group {\it with modulus} as the homology group of a subcomplex of the cubical cycle complex.

\begin{dfn}\label{higher-chow-mod}
For a modulus pair $\mathscr{X}=(\overline{X},X^\infty )$ and integers $q,r \geq 0$, we define an abelian subgroup $\underline{z}^r (\mathscr{X}, q) \subset \underline{z}^r (\mathscr{X}^\circ ,q)$ as 
\[
\Set{\sum_i n_i V_i \in \underline{z}^r (\mathscr{X}^\circ ,q) | \parbox{60mm}{each irreducible component $V_i$ satisfies \\ the modulus condition $(\star )$}},
\]
where we say that an irreducible closed subset $V \subset \mathscr{X}^\circ \times \square^q$ satisfies the modulus condition if the following holds:
\begin{itemize}
\item[$(\star )$]
Let $\overline{V}$ be the closure of $V$ in $\overline{X} \times (\mathbb{P}^1 )^q$ and let $\nu_V : \overline{V}^N \to \overline{V} \hookrightarrow \overline{X} \times (\mathbb{P}^1 )^q$ be the composite of the normalization morphism and the natural inclusion.
Then, we have an inequality of Cartier divisors on $\overline{V}^N$:
\[
\nu_V^\ast (X^\infty \times (\mathbb{P}^1 )^q ) \leq  \nu_V^\ast (\overline{X} \times F_q ),
\]
where $F_q$ is an effective Cartier divisor on $(\mathbb{P}^1 )^q$ defined by
\[
F_q = 
\begin{cases} \sum_{i=1}^q (\mathbb{P}^1 )^{i-1} \times \{\infty \} \times (\mathbb{P}^1 )^{q-i} & (q \geq 1), \\
0 & (q=0).
\end{cases}
\]
\end{itemize}
\end{dfn}

\begin{rem}\label{rem1-chow-mod}
For effective modulus pairs, the above definition coincides with the one introduced by Binda-Saito in \cite{BS}.
The case $q \geq 1$ is by definition. We can see the case $q=0$ by noting that for any integral closed subscheme $V \subset \mathscr{X}^\circ $,  $D|_{\overline{V}^N } \leq 0$ iff. $D|_{\overline{V}^N }=0$ iff. $|D| \cap \overline{V} = \emptyset$.
\end{rem}

\begin{prop}\label{chow-cubical}
The structure maps $d_q^\epsilon$ and $p_i$ induces maps 
\[
d_q^\epsilon : \underline{z}^r (\mathscr{X},q) \to \underline{z}^r (\mathscr{X},q-1),  \ \ \pi_i : \underline{z}^r (\mathscr{X},q-1) \to \underline{z}^r (\mathscr{X},q),
\]
and $\{\underline{z}^r (\mathscr{X},\ast ) ,d_i^\epsilon ,\pi_i \}$ defines a cubical subgroup of $\{\underline{z}^r (\mathscr{X}^\circ  ,\ast ) ,d_i^\epsilon ,\pi_i \}$.
\end{prop}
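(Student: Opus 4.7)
The plan is to verify that the face maps $d_i^\epsilon$ and degeneracy maps $\pi_i$ preserve the modulus condition $(\star)$ of Definition \ref{higher-chow-mod}; once this is established, the cubical identities are automatically inherited from those of $\underline{z}^r(\mathscr{X}^\circ, \ast)$. Proper intersection with faces is already built into membership in $\underline{z}^r(\mathscr{X}^\circ, q)$, so only $(\star)$ requires work. I treat the two cases separately, reducing the face case to the Containment Lemma \ref{containment-lemma} and the degeneracy case to a direct additivity computation of divisors.

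For a face map $d_i^\epsilon$, let $V$ be an integral generator of $\underline{z}^r(\mathscr{X}, q)$ and let $W$ be an irreducible component of the proper intersection $V \cap (\mathscr{X}^\circ \times \delta_i^\epsilon(\square^{q-1}))$ appearing in $d_i^\epsilon(V)$. Write $\overline{V}, \overline{W} \subset \overline{X} \times (\mathbb{P}^1)^q$ for the closures. Condition $(\star)$ for $V$ is exactly the statement that the divisor $D := \overline{X} \times F_q - X^\infty \times (\mathbb{P}^1)^q$ pulls back to an effective Cartier divisor on $\overline{V}^N$; hence $(\overline{X} \times (\mathbb{P}^1)^q, D)$ is a modulus pair (in the generalized sense of Definition \ref{modulus-pair}) with $V$ lying in its interior $\mathscr{X}^\circ \times \square^q$. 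Since $\overline{W} \subset \overline{V}$ is an integral closed subscheme with $W \subset V$ itself in the interior, Lemma \ref{containment-lemma} yields effectivity of $D$ on $\overline{W}^N$ as well. To convert this into $(\star)$ for $W$ viewed inside $\overline{X} \times (\mathbb{P}^1)^{q-1}$, observe that $\delta_i^\epsilon$ is a closed immersion with image disjoint from the hyperplane $\{x_i = \infty\}$, so $\delta_i^{\epsilon *}F_q = F_{q-1}$ and $\delta_i^{\epsilon *}(X^\infty \times (\mathbb{P}^1)^q) = X^\infty \times (\mathbb{P}^1)^{q-1}$. The closure of $W$ in $\overline{X} \times (\mathbb{P}^1)^{q-1}$ is identified with $\overline{W}$ via $\delta_i^\epsilon$, their normalizations coincide, and the two pullbacks of $D$ match under this identification, giving $(\star)$ for $W$.

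For a degeneracy $\pi_i$, let $V$ be an integral generator of $\underline{z}^r(\mathscr{X}, q-1)$. The flat pullback $\pi_i(V)$ along $\mathrm{id}_{\overline{X}} \times p_i$ is an integral cycle whose closure $\widetilde{V} \subset \overline{X} \times (\mathbb{P}^1)^q$ is obtained from $\overline{V}$ by inserting a $\mathbb{P}^1$ factor in the $i$-th slot. Since $\mathbb{P}^1$ is smooth, the product $\overline{V}^N \times \mathbb{P}^1$ is normal and finite birational over $\widetilde{V}$, so $\widetilde{V}^N = \overline{V}^N \times \mathbb{P}^1$. Splitting $F_q = G + H_i$ where $H_i = \{x_i = \infty\}$ and $G$ gathers the faces $\{x_j = \infty\}$ for $j \neq i$, one computes
\[
\nu_{\widetilde{V}}^*(\overline{X} \times F_q) = \nu_V^*(\overline{X} \times F_{q-1}) \times \mathbb{P}^1 + \overline{V}^N \times \{\infty\}, \qquad \nu_{\widetilde{V}}^*(X^\infty \times (\mathbb{P}^1)^q) = \nu_V^*(X^\infty \times (\mathbb{P}^1)^{q-1}) \times \mathbb{P}^1.
\]
The difference decomposes as the product with $\mathbb{P}^1$ of the effective divisor provided by $(\star)$ for $V$, plus the manifestly effective term $\overline{V}^N \times \{\infty\}$; this proves $(\star)$ for $\pi_i(V)$.

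The main obstacle is the bookkeeping in the face-map case: one must carefully identify $\overline{W}^N$ whether $W$ is viewed inside $\overline{X} \times (\mathbb{P}^1)^q$ or inside $\overline{X} \times (\mathbb{P}^1)^{q-1}$, and match the pullbacks of the modulus-defining divisors under $\delta_i^\epsilon$. Once this is set up correctly, the face case reduces transparently to Lemma \ref{containment-lemma}, and the degeneracy case is a direct divisor computation.
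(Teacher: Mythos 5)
Your proof is correct and follows essentially the same route as the paper: the face maps are handled by applying the Containment Lemma to the divisor $\overline{X}\times F_q - X^\infty\times(\mathbb{P}^1)^q$ on $\overline{X}\times(\mathbb{P}^1)^q$, and the degeneracies by identifying the closure of $V\times\mathbb{A}^1$ with $\overline{V}\times\mathbb{P}^1$; you merely spell out the divisor bookkeeping (the identification of $\overline{W}^N$ under $\delta_i^\epsilon$ and the splitting of $F_q$) that the paper leaves implicit. The only trivial omission is the one-line remark that permutations of factors and involutions also preserve the modulus condition, which is needed for the full cubical-subgroup claim and which the paper dispatches as obvious.
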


\begin{proof}
Obviously, the permutation of factors and the involutions preserve the face condition and the modulus condition.
So, it suffices to prove that the pullbacks of cycles 
\begin{align*}
d_i^\epsilon = (\delta_i^\epsilon )^\ast : \underline{z}^r(\mathscr{X},q) \to \underline{z}^r(\mathscr{X},q-1), \ \ 
\pi_i = p_i^\ast : \underline{z}^r(\mathscr{X},q-1) \to \underline{z}^r(\mathscr{X},q)
\end{align*}
are well-defined.
The proof for $\pi_i$ is easy, noting that for any integral closed subscheme $V \subset \mathscr{X}^\circ \times \mathbb{A}^{q-1}$, we have
\[
\overline{\pi_i (V)}=\overline{V \times \mathbb{A}^1 } = \overline{V} \times \mathbb{P}^1 \subset \overline{X} \times (\mathbb{P}^1 )^{q-1} \times \mathbb{P}^1 ,
\]
where $\overline{V \times \mathbb{A}^1 } \subset \overline{X} \times (\mathbb{P}^1 )^q$ and $\overline{V} \subset \overline{X} \times (\mathbb{P}^1 )^{q-1}$ are closures. Then, the modulus condition on $V$ immediately implies the modulus condition on $V \times \mathbb{A}^1 $.
The assertion for  $d_i^\epsilon$ is a consequence of Lemma \ref{containment-lemma}.
Indeed, for any irreducible cycle $V \subset \mathscr{X}^\circ \times \mathbb{A}^{q} $, the support $|d_i^\epsilon (V)|$ is contained in $V$, so by Lemma \ref{containment-lemma}, the modulus condition for $V$ implies the modulus condition for $d_i^\epsilon (V)$.
This finishes the proof.
 \end{proof}

\begin{dfn}\label{degenerate-cycle} 
Let $A$ be an abelian group, and set
\[
\underline{z}^r (\mathscr{X},\ast ,A) := \underline{z}^r (\mathscr{X},\ast ) \otimes_\mathbb{Z} A.
\]
Obviously, the cubical structure on $\underline{z}^r (\mathscr{X},\ast)$ induces a cubical structure on $\underline{z}^r (\mathscr{X},\ast ,A)$.
Define the (cubical) {\it cycle complex with modulus with coefficients in $A$} as the quotient complex
\[
z^r(\mathscr{X},\ast ,A) := \underline{z}^r(\mathscr{X},\ast ,A)/\underline{z}^r(\mathscr{X},\ast ,A)_{\mathrm{degn}} .
\]
By Lemma \ref{normalization}, it is canonically isomorphic to the reduced subcomplex \[\underline{z}^r(\mathscr{X},\ast ,A)_0 \subset \underline{z}^r(\mathscr{X},\ast ,A).\]

We define the {\it higher Chow groups with modulus of codimension $r$ of a modulus pair $\mathscr{X}$ with coefficients in $A$} by
\[
\mathrm{CH}^r (\mathscr{X},q,A) := H_q (z^r (\mathscr{X},\ast ,A) ) \cong H_q (\underline{z}^r (\mathscr{X},\ast ,A)_0 ).
\]
When $A=\mathbb{Z}$, we write
\[
\mathrm{CH}^r (\mathscr{X},q):=\mathrm{CH}^r (\mathscr{X},q,\mathbb{Z}).
\]
\end{dfn}


\subsection{A variant}\label{section-variant}
In \S \ref{section-indep}, we also use a variant of the higher Chow group with modulus.

\begin{dfn}\label{variant-higher-chow-mod} 
For a modulus pair $\mathscr{X}=(\overline{X},X^\infty )$ and integers $q,r \geq 0$, we define an abelian subgroup $\underline{z}^r (\mathscr{X}, q)' \subset \underline{z}^r (\mathscr{X}^\circ ,q)$ as 
\[
\Set{\sum_i n_i V_i \in \underline{z}^r (\mathscr{X}^\circ ,q) | \parbox{60mm}{each irreducible component $V_i$ satisfies \\ the na\"{i}ve modulus condition $(\star )'$}},
\]
where we say that an irreducible closed subset $V \subset \mathscr{X}^\circ \times \square^q$ satisfies the na\"{i}ve modulus condition if the following holds:
\begin{itemize}
\item[$(\star )'$] Let $\overline{V}$ be the closure of $V$ in $\overline{X} \times \mathbb{A}^q$ and $\nu_V : \overline{V}^N \to \overline{V} \hookrightarrow \overline{X} \times \mathbb{A}^q$ be the composite of normalization morphism and the natural inclusion.
Then, we have an inequality
\[
\nu_V^\ast (X^\infty \times \mathbb{A}^q ) \leq  0
\]
of Cartier divisors on $\overline{V}^N$.
\end{itemize}
Then, we obtain a cubical abelian subgroup 
\[
\underline{z}^r (\mathscr{X},\ast  )' \subset \underline{z}^r (\mathscr{X}^\circ ,\ast  )
\] by the same argument as in the proof of Proposition \ref{chow-cubical}. 
For any abelian group $A$, we define
\begin{align*}
\underline{z}^r (\mathscr{X},\ast ,A)' &:= \underline{z}^r (\mathscr{X},\ast )' \otimes_\mathbb{Z} A, \\
z^r (\mathscr{X},\ast ,A)' &:= \underline{z}^r (\mathscr{X},\ast ,A)' / \underline{z}^r (\mathscr{X},\ast ,A)_{\mathrm{degn}}', \\
\mathrm{CH}^r (\mathscr{X},q)' &:= H_q (z^r (\mathscr{X},\ast )') \cong H_q (\underline{z}^r (\mathscr{X},\ast )'_0).
\end{align*}
We call $\mathrm{CH}^r (\mathscr{X},q)'$ the {\it na\"{i}ve higher Chow group with modulus}.
\end{dfn}


\begin{rem}\label{effective-case}
If $\mathscr{X}=(\overline{X},X^\infty )$ is effective, then the na\"{i}ve higher Chow group $\mathrm{CH}^r (\mathscr{X},\ast )'$ is independent of the multiplicity of $X^\infty$.
Indeed, for effective $X^\infty$, the na\"{i}ve modulus condition $(\star )'$ is equivalent to the condition that the closure $\overline{V} \subset \overline{X} \times \mathbb{A}^q $ misses the support $|X^\infty \times \mathbb{A}^q |=|X^\infty | \times \mathbb{A}^q $.
\end{rem}

\subsection{Flat pullback and proper pushforward}

\begin{dfn}\label{adm-coadm}
A {\it morphism of modulus pairs} $f : \mathscr{X} \to \mathscr{Y}$ is a morphism of $k$-schemes $\mathscr{X}^\circ \to \mathscr{Y}^\circ$.
We use the symbol $f^\circ$ instead of $f$ if we want to clarify that we are considering just a morphism of $k$-schemes.
The composition of morphisms of modulus pairs are defined by the composition of morphisms of $k$-schemes.
Let $V=\Gamma_{f^\circ } \subset \mathscr{X}^\circ \times \mathscr{Y}^\circ $ denote the graph of $f^\circ $.
Moreover, let $\overline{V} \subset \overline{X} \times \overline{Y}$ be the closure of $V$, and let $\overline{V}^N \to \overline{V}$ be its normalization.
Let $\nu_V : \overline{V}^N \to \overline{V} \hookrightarrow \overline{X} \times \overline{Y}$ be the composite map.
Then, we say 
\begin{itemize}
\item
$f$ is {\it  admissible}  if 
\[
\nu_V^\ast (X^\infty \times \overline{Y} )  \geq \nu_V^\ast (\overline{X} \times Y^\infty ),
\]
\item
$f$ is {\it coadmissible} if
\[
\nu_V^\ast (X^\infty \times \overline{Y} )  \leq \nu_V^\ast (\overline{X} \times Y^\infty ),
\]
\item
$f$ is {\it minimal} if $f$ is admissible and coadmissible,
\item
$f$ is {\it left fine} (resp. {\it right fine}) if $\overline{V}$ is proper over $\overline{X}$ (resp. over $\overline{Y}$),
\item
$f$ is {\it flat} (resp. {\it proper}) if $f^\circ : \mathscr{X}^\circ \to \mathscr{Y}^\circ $ is flat (resp. proper).
\end{itemize}
\end{dfn}

For later use, we introduce the following definition.

\begin{dfn}\label{notation-multiplicity}
For every modulus pair $\mathscr{X}=(\overline{X},X^\infty )$ and an integer $n \in \mathbb{Z} - \{0\}$, define a modulus pair $\mathscr{X}^{(n)}$ by
\[
\mathscr{X}^{(n)} = (\overline{X},n \cdot X^\infty ).
\]
\end{dfn}

\begin{rem}\label{adm-coadm-minus}
(1) For two modulus pairs $\mathscr{X},\mathscr{Y}$, a morphism of modulus pairs $f : \mathscr{X} \to \mathscr{Y}$ is admissible (resp. coadmissible) if and only if $f$ is coadmissble (resp. admissible) considered as a morphism of modulus pairs $\mathscr{X}^{(-1)} \to \mathscr{Y}^{(-1)}$.

(2) A morphism of modulus pairs $f : \mathscr{X} \to \mathscr{Y}$ is left fine (resp. right fine) if there exists a morphism (resp. a proper morphism) of $k$-schemes $\overline{f} : \overline{X} \to \overline{Y}$ extending $f^\circ : \mathscr{X}^\circ \to \mathscr{Y}^\circ $.
Indeed, since $\Gamma_{f^\circ } \subset \Gamma_{\overline{f}}$, we have $\overline{\Gamma }_{f^\circ } \subset \Gamma_{\overline{f}} \cong \overline{X}$.
\end{rem}


We omit the proof of the following proposition because it will not be used in the rest of the paper, and the proof is the same as in \cite[Lemma 1.11]{KSY}.

\begin{prop}\label{composition}
Let $f : \mathscr{X}_1 \to \mathscr{X}_2$ and $g : \mathscr{X}_2 \to \mathscr{X}_3$ be morphisms of modulus pairs and consider the composite $g \circ f : \mathscr{X}_1 \to \mathscr{X}_3$.
Then, the following assertions hold:
\begin{itemize}
\item[(1)]
If $f,g$ are admissible (resp. coadmissible) and left fine, then so is $g \circ f$.
\item[(2)]
If $f,g$ are admissible (resp. coadmissible) and right fine, then so is $g \circ f$.
\end{itemize}
 \end{prop}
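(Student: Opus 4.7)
The plan is to handle fineness and (co)admissibility separately. For fineness, I would invoke Remark \ref{adm-coadm-minus}(2): if $f$ and $g$ are left fine, then $f^\circ$ and $g^\circ$ extend to morphisms of $k$-schemes $\overline{f}\colon \overline{X}_1 \to \overline{X}_2$ and $\overline{g}\colon \overline{X}_2 \to \overline{X}_3$, and these extensions can be chosen proper when $f$ and $g$ are right fine. The composite $\overline{g} \circ \overline{f}$ extends $(g \circ f)^\circ$ and is proper whenever both factors are, so $g \circ f$ inherits the corresponding fineness property.

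For admissibility (coadmissibility being symmetric, with all inequalities reversed), the strategy is to introduce a ``three-fold graph'' and combine the two hypotheses on it. Let $V_f$, $V_g$, $V$ denote the graphs of $f^\circ$, $g^\circ$, $(g \circ f)^\circ$ inside the appropriate products, and let $\overline{V}_f^N$, $\overline{V}_g^N$, $\overline{V}^N$ be the normalizations of their closures. Set
$$\Gamma = \{ (x, f^\circ(x), g^\circ(f^\circ(x))) \mid x \in \mathscr{X}_1^\circ \} \;\subset\; \mathscr{X}_1^\circ \times \mathscr{X}_2^\circ \times \mathscr{X}_3^\circ,$$
let $\overline{\Gamma}$ be its closure in $\overline{X}_1 \times \overline{X}_2 \times \overline{X}_3$, and let $\overline{\Gamma}^N \to \overline{\Gamma}$ be the normalization. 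The three coordinate projections restrict to isomorphisms $\Gamma \xrightarrow{\sim} V_f, V_g, V$ on interiors, so they send $\overline{\Gamma}$ surjectively onto each of $\overline{V}_f$, $\overline{V}_g$, $\overline{V}$, and by the universal property of normalization they lift to finite surjective morphisms $\overline{\Gamma}^N \to \overline{V}_f^N, \overline{V}_g^N, \overline{V}^N$. Pulling back the admissibility inequalities for $f$ and $g$ along the first two of these maps and concatenating yields
$$(X_1^\infty \times \overline{X}_2 \times \overline{X}_3)|_{\overline{\Gamma}^N} \;\geq\; (\overline{X}_1 \times X_2^\infty \times \overline{X}_3)|_{\overline{\Gamma}^N} \;\geq\; (\overline{X}_1 \times \overline{X}_2 \times X_3^\infty)|_{\overline{\Gamma}^N},$$
which is precisely the pullback to $\overline{\Gamma}^N$ of the inequality encoding admissibility of $g \circ f$ on $\overline{V}^N$.

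The main obstacle, and the one step that genuinely uses modulus-theoretic input, is the descent of this inequality from $\overline{\Gamma}^N$ back down to $\overline{V}^N$. After reducing to irreducible components (using that the $\overline{X}_i$ are equidimensional, so the various graphs split into finitely many integral pieces that can be treated separately), the morphism $\overline{\Gamma}^N \to \overline{V}^N$ becomes a surjective map of normal integral $k$-schemes. Lemma \ref{effectivity-criterion}, applied to the difference of the two Cartier divisors in question on $\overline{V}^N$, then shows that this difference is effective on $\overline{V}^N$, which is exactly admissibility of $g \circ f$. Everything else amounts to routine bookkeeping with graph constructions and the universal property of normalization.
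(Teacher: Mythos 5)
Your overall architecture (three-fold graph, concatenate the two inequalities, descend by Lemma \ref{effectivity-criterion} along the proper surjection $\overline{\Gamma}^N \to \overline{V}^N$) is the right one — it is essentially the argument of \cite[Lemma 1.11]{KSY} that the paper cites — but two of your intermediate claims are false as stated. First, the fineness step: you invoke Remark \ref{adm-coadm-minus}(2) in the wrong direction. That remark says that the existence of an extension $\overline{f}\colon \overline{X}_1 \to \overline{X}_2$ \emph{implies} left fineness; it is not a characterization. Left fineness only says that $\overline{\Gamma}_{f^\circ}$ is proper over $\overline{X}_1$, and this does not force $f^\circ$ to extend to $\overline{X}_1$ (take $\mathscr{X}_1^\circ = \mathbb{G}_m^2 \subset \overline{X}_1 = \mathbb{A}^2$ and $f^\circ(x,y)=[x:y] \in \mathbb{P}^1$: the graph closure is the blowup at the origin, proper over $\mathbb{A}^2$, yet $f^\circ$ has no extension). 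The correct argument is the fiber-product one: $\overline{\Gamma}$ is a closed subscheme of $\overline{V}_f \times_{\overline{X}_2} \overline{V}_g$, which is proper over $\overline{X}_1$ when $f$ and $g$ are left fine, so $\overline{\Gamma}$ is proper over $\overline{X}_1$ and its (closed, proper) image in $\overline{X}_1 \times \overline{X}_3$ contains, hence equals, $\overline{V}$.

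Second, and more seriously for the modulus part: the projection $\Gamma \to V_g$ is \emph{not} an isomorphism — under the identifications $\Gamma \cong \mathscr{X}_1^\circ$ and $V_g \cong \mathscr{X}_2^\circ$ it is just $f^\circ$, which need be neither surjective nor injective. Consequently there is in general no finite surjective lift $\overline{\Gamma}^N \to \overline{V}_g^N$, and you cannot pull back the admissibility inequality for $g$ the way you describe. The standard repair is exactly what Lemma \ref{containment-lemma} is for: the inequality for $g$ holds after taking the product with $\overline{X}_1$, i.e.\ on $(\overline{X}_1 \times \overline{V}_g)^N = (\overline{X_1^\circ \times V_g})^N$, and $\Gamma$ is a closed subscheme of $\mathscr{X}_1^\circ \times V_g$, so the Containment Lemma (applied componentwise to the difference of the two Cartier divisors) transfers the inequality to $\overline{\Gamma}^N$; one treats the inequality for $f$ symmetrically via $\overline{V}_f \times \overline{X}_3$. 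With these two repairs your concluding descent to $\overline{V}^N$ via Lemma \ref{effectivity-criterion} goes through as you wrote it.
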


\begin{prop}\label{functoriality} Let $A$ be an abelian group.
Let $f : \mathscr{X} \to \mathscr{Y}$ be a morphism of modulus pairs (see Definition \ref{adm-coadm}). 
\begin{itemize}
\item[(1)] Assume that $f$ is coadmissible, left fine and flat as a morphism of modulus pairs. Then, we have a canonical pullback morphisms of cubical groups
\[
f^\ast : \underline{z}^r (\mathscr{Y},\ast ,A) \to \underline{z}^r (\mathscr{X},\ast,A), \ \ f^\ast : \underline{z}^r (\mathscr{Y},\ast ,A)' \to \underline{z}^r (\mathscr{X},\ast ,A)'.
\]
In particular, we obtain a pullback maps for any $q \geq 0$
\[
f^\ast : \mathrm{CH}^r (\mathscr{Y},q,A) \to \mathrm{CH}^r (\mathscr{X},q,A), \ \ f^\ast : \mathrm{CH}^r (\mathscr{Y},q,A)' \to \mathrm{CH}^r (\mathscr{X},q,A)'.
\]

\item[(2)]  Assume that $f$ is admissible, right fine and proper as a morphism of modulus pairs. Then, we have a canonical pushforward morphisms of cubical groups
\[
f_\ast : \underline{z}_r (\mathscr{X},\ast ,A) \to \underline{z}_{r} (\mathscr{Y},\ast ,A), \ \ f_\ast : \underline{z}_r (\mathscr{X},\ast ,A)' \to \underline{z}_{r} (\mathscr{Y},\ast ,A)'.
\]
In particular, we obtain a pushforward maps for any $q \geq 0$
\[
f_\ast : \mathrm{CH}_r (\mathscr{X},q,A) \to \mathrm{CH}_{r} (\mathscr{Y},q,A), \ \ f_\ast : \mathrm{CH}_r (\mathscr{X},q,A)' \to \mathrm{CH}_{r} (\mathscr{Y},q,A)'.
\]
\end{itemize}
\end{prop}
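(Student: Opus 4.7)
The plan is to reduce both statements to the geometric claim that flat pullback (resp.\ proper pushforward) of cycles at the interior level preserves the modulus condition, the compatibility with faces and degeneracies being routine by flat base change (resp.\ the projection formula) applied on $\square^q$. Throughout, let $\Gamma \subset \overline{X} \times \overline{Y}$ denote the closure of the graph $\Gamma_{f^\circ}$ and $\nu_\Gamma : \Gamma^N \to \Gamma \hookrightarrow \overline{X} \times \overline{Y}$ its normalization. By left (resp.\ right) fineness the projection $p_1 : \Gamma \to \overline{X}$ (resp.\ $p_2 : \Gamma \to \overline{Y}$) is proper, and coadmissibility (resp.\ admissibility) gives $\nu_\Gamma^{\ast}(X^\infty \times \overline{Y}) \leq \nu_\Gamma^{\ast}(\overline{X} \times Y^\infty)$ (resp.\ the reverse inequality) on $\Gamma^N$. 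Moreover, $\Gamma_{f^\circ} \subset \Gamma$ is isomorphic to $\mathscr{X}^\circ$ via $p_1$.

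For (1), I would define $f^{\ast}$ as the standard flat pullback of cycles along $f^\circ \times \mathrm{id}$, extended by $\otimes_{\mathbb{Z}} A$. The essential content is that for any prime cycle $W \in \underline{z}^r(\mathscr{Y},q)$ with closure $\overline{W} \subset \overline{Y} \times (\mathbb{P}^1)^q$, each prime component $V$ of $(f^\circ \times \mathrm{id})^{\ast} W$ (with closure $\overline{V} \subset \overline{X} \times (\mathbb{P}^1)^q$) satisfies $(\star)$ for $\mathscr{X}$. Using the isomorphism $\Gamma_{f^\circ} \xrightarrow{\cong} \mathscr{X}^\circ$, lift $V$ to $\widetilde V \subset \Gamma_{f^\circ} \times \square^q$ and form the closure $\overline{\widetilde V} \subset \Gamma \times (\mathbb{P}^1)^q$. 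Then $p_1 \times \mathrm{id}$ sends $\overline{\widetilde V}$ properly and surjectively onto $\overline{V}$ (by left fineness), while $p_2 \times \mathrm{id}$ sends $\overline{\widetilde V}$ set-theoretically into $\overline{W}$. The universal property of normalization (combined with the fact that $\overline{V}^N \to \overline{V}$ and $\overline{W}^N \to \overline{W}$ are finite, hence separated) produces morphisms $\overline{\widetilde V}^N \twoheadrightarrow \overline{V}^N$ (proper surjective) and $\overline{\widetilde V}^N \to \overline{W}^N$, together with a map $\overline{\widetilde V}^N \to \Gamma^N$. Inside the ambient space $\overline{X} \times \overline{Y} \times (\mathbb{P}^1)^q$, the coadmissibility inequality pulled back from $\Gamma^N$ and the modulus condition on $W$ pulled back from $\overline{W}^N$ combine to give
\[
(X^\infty \times \overline{Y} \times (\mathbb{P}^1)^q)|_{\overline{\widetilde V}^N} \leq (\overline{X} \times Y^\infty \times (\mathbb{P}^1)^q)|_{\overline{\widetilde V}^N} \leq (\overline{X} \times \overline{Y} \times F_q)|_{\overline{\widetilde V}^N}.
\]
The outer inequality is precisely the pullback along $\overline{\widetilde V}^N \twoheadrightarrow \overline{V}^N$ of the desired inequality $\nu_V^{\ast}(X^\infty \times (\mathbb{P}^1)^q) \leq \nu_V^{\ast}(\overline{X} \times F_q)$, so Lemma \ref{effectivity-criterion} descends it to $\overline{V}^N$, yielding $(\star)$ for $V$. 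The same strategy, with $(\mathbb{P}^1)^q$ replaced by $\mathbb{A}^q$ and $F_q$ by $0$, handles the na\"ive variant $(\star)'$.

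Part (2) is the symmetric argument. I would define $f_{\ast}$ as the usual proper pushforward of cycles, extended by $\otimes_{\mathbb{Z}} A$; cubical compatibility follows from the projection formula. For a prime $V \in \underline{z}_r(\mathscr{X},q)$ and an irreducible component $W$ of $(f^\circ \times \mathrm{id})_{\ast} V$ (supported on the image of $V$ when $f^\circ|_V$ is generically finite, zero otherwise), construct $\overline{\widetilde V}$ as before. Right fineness now makes $\overline{\widetilde V}^N \twoheadrightarrow \overline{W}^N$ proper surjective, while the map $\overline{\widetilde V}^N \to \overline{V}^N$ remains available from the graph setup. Combining the modulus condition on $V$ pulled back via $\overline{\widetilde V}^N \to \overline{V}^N$ with the admissibility inequality pulled back from $\Gamma^N$ yields
\[
(\overline{X} \times Y^\infty \times (\mathbb{P}^1)^q)|_{\overline{\widetilde V}^N} \leq (X^\infty \times \overline{Y} \times (\mathbb{P}^1)^q)|_{\overline{\widetilde V}^N} \leq (\overline{X} \times \overline{Y} \times F_q)|_{\overline{\widetilde V}^N},
\]
which is the pullback along $\overline{\widetilde V}^N \twoheadrightarrow \overline{W}^N$ of the desired inequality for $W$; Lemma \ref{effectivity-criterion} descends it. The variant $(\star)'$ is handled identically.

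The principal obstacle will be the careful bookkeeping of universal-property factorizations between normalizations, specifically verifying that each of the maps $\overline{\widetilde V}^N \to \overline{V}^N$, $\overline{\widetilde V}^N \to \overline{W}^N$, $\overline{\widetilde V}^N \to \Gamma^N$ is well-defined, and (in the critical case) proper and surjective so that Lemma \ref{effectivity-criterion} applies. Once this is in place, the proof reduces to a clean algebraic combination of the two key inequalities on a common ``graph resolution'' $\overline{\widetilde V}^N$ provided by the left/right fineness hypothesis.
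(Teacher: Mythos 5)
Your overall architecture is the same as the paper's: lift everything to the closure of the graph, chain the (co)admissibility inequality together with the modulus condition of the given cycle on a common ``graph resolution,'' and descend along a proper surjection of normalizations via Lemma \ref{effectivity-criterion}, with left/right fineness supplying the needed properness. However, there is one genuine gap: the map $\overline{\widetilde V}^N \to \Gamma^N$ that you invoke in order to ``pull back the coadmissibility inequality from $\Gamma^N$'' does not exist in general. The universal property of normalization only produces a factorization through $\Gamma^N$ when the source is normal \emph{and dominates} (a component of) $\Gamma$; but $\overline{\widetilde V}$ is the closure of a cycle of codimension $r$, and its image in $\Gamma$ is the closure of the image of $V$ under the projection $\mathscr{X}^\circ \times \square^q \to \mathscr{X}^\circ$, which is a proper closed subset whenever $V$ does not dominate $\mathscr{X}^\circ$ (e.g.\ already for a closed point when $r=\dim \mathscr{X}^\circ$, $q=0$). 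So the first inequality in your displayed chain is not obtained by pullback along any morphism to $\Gamma^N$. The same issue occurs symmetrically in part (2) with the admissibility inequality. (By contrast, your maps $\overline{\widetilde V}^N \twoheadrightarrow \overline{V}^N$ and $\overline{\widetilde V}^N \to \overline{W}^N$ are fine: the first is dominant by construction, and the second is dominant in part (1) because flat morphisms send generic points to generic points, and in part (2) by the definition of $W$ as the image.)

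The missing ingredient is precisely the Containment Lemma (Lemma \ref{containment-lemma}), which transfers effectivity of a divisor from $\overline{V}^N$ to $\overline{V'}^N$ for \emph{any} closed subset $V' \subset V$, with no dominance hypothesis; its proof replaces the nonexistent factorization by choosing a component of $\overline{V}^N \times_{\overline{V}} \overline{V'}$ finite surjective over $\overline{V'}$. This is exactly how the paper proceeds: it derives the two inequalities on $\overline{(\Gamma_f\times\mathbb{A}^q)\cap(X\times V)}^N$ from the corresponding inequalities on $\overline{\Gamma_f\times\mathbb{A}^q}^N$ (coadmissibility) and on $\overline{X\times V}^N$ (modulus condition of the given cycle) by applying Lemma \ref{containment-lemma} to the inclusion of the intersection into each of the two larger subsets. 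If you replace your ``pull back from $\Gamma^N$'' step by an application of Lemma \ref{containment-lemma} to $\widetilde V \subset \Gamma_{f^\circ}\times\square^q$, your argument closes the gap and coincides with the paper's proof.
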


\begin{proof}
For simplicity, we assume that $A=\mathbb{Z}$.
The maps for general $A$ are obtained by taking $\otimes_\mathbb{Z} A$.
Moreover, we only prove that $f^\ast : \underline{z}^r (\mathscr{Y},\ast ) \to \underline{z}^r (\mathscr{X},\ast )$ and $f_\ast : \underline{z}_r (\mathscr{X},\ast ) \to \underline{z}_{r} (\mathscr{Y},\ast )$ are well-defined.
 The proof for the na\"{i}ve variant is obtained by considering $z^r (-,\ast )', \underline{z}^r (-,\ast )'$ instead of $z^r (-,\ast ), \underline{z}^r (-,\ast )$, and replacing $(\mathbb{P}^1 )^q , F_q$ by $\mathbb{A}^q ,0$ respectively

First, we prove (1). Assume that $f$ is coadmissble and flat.
By the definition, the flat morphism $f$ of modulus pairs is just a flat morphism of $k$-schemes $f=f^\circ : \mathscr{X}^\circ \to \mathscr{Y}^\circ$.
This induces the pullback map of cubical abelian groups
$
f^\ast : \underline{z}^r(\mathscr{Y}^\circ ,\ast ) \to \underline{z}^r(\mathscr{X}^\circ ,\ast ).
$
Since we have obvious inclusions $\underline{z}^r(\mathscr{X},\ast ) \subset \underline{z}^r(\mathscr{X}^\circ ,\ast  )$ and $\underline{z}^r(\mathscr{Y},\ast ) \subset \underline{z}^r(\mathscr{Y}^\circ ,\ast  )$, it suffices to prove that the map $f^\ast$ sends $\underline{z}^r(\mathscr{Y},\ast )$ to $\underline{z}^r(\mathscr{X},\ast )$.
In the following we set $X=\mathscr{X}^\circ$ and $Y=\mathscr{Y}^\circ$.

Fix $q\geq 0$ and take any integral algebraic cycle $V \in \underline{z}^r(\mathscr{Y},q)$. 
To show that $W := f^\ast (V)$ belongs to $\underline{z}^r(\mathscr{X},q)$,
it suffices to check the modulus condition.
The support $|f^\ast (V)|=f^{-1}(|V|)$ of $f^\ast (V)$ is the image of 
\[ (\Gamma_f \times \mathbb{A}^q ) \cap (X \times V) \subset X \times Y \times \mathbb{A}^q \]
under the projection
$
X \times Y \times \mathbb{A}^q \to X \times \mathbb{A}^q .
$
Consider the closure 
\[ \overline{(\Gamma_f \times \mathbb{A}^q) \cap (X \times V)}\subset \overline{X} \times \overline{Y} \times (\mathbb{P}^1)^q. \]
Since $f$ is left fine (see Definition \ref{adm-coadm}), $\overline{\Gamma}_f \subset \overline{X} \times \overline{Y}$ is proper over $\overline{X}$, hence the composite map
\[
\overline{\Gamma_f \times \mathbb{A}^q} = \overline{\Gamma_f} \times (\mathbb{P}^1)^q \hookrightarrow \overline{X} \times \overline{Y} \times (\mathbb{P}^1 )^q \to \overline{X} \times (\mathbb{P}^1 )^q
\]
is also proper.
Letting $\overline{W} \subset \overline{X} \times (\mathbb{P}^1 )^q$ denote the closure of $W$, we obtain a proper surjective morphism 
\[
\overline{(\Gamma_f \times \mathbb{A}^q) \cap (X \times V)} \twoheadrightarrow \overline{W}.
\]
By the universality of normalization, we obtain a commutative diagram
\[\xymatrix{
\overline{(\Gamma_f \times \mathbb{A}^q) \cap (X \times V)}^N \ar[r]^(0.72){\exists}_(0.72){\text{surj.}} \ar[d]_{\text{normalization}} & \overline{W}^N \ar[d]^{\text{normalization}} \\
\overline{(\Gamma_f \times \mathbb{A}^q) \cap (X \times V)} \ar[r]_(0.72){\text{surj.}} & \overline{W}.
}\]
Thus, Lemma \ref{effectivity-criterion} implies that the desired inequality of Cartier divisors on $\overline{W}^N$
\[
X^\infty \times (\mathbb{P}^1)^q |_{\overline{W}^N} \leq \overline{X} \times F_q |_{\overline{W}^N}
\]
follows from 
\[
X^\infty \times \overline{Y} \times (\mathbb{P}^1 )^q|_{\overline{(\Gamma_f \times \mathbb{A}^q) \cap (X \times V)}^N} \leq \overline{X} \times \overline{Y} \times  F_q |_{\overline{(\Gamma_f \times \mathbb{A}^q) \cap (X \times V)}^N}.
\]
The last inequality is obtained by the following two inequalities:
\begin{align*}
X^\infty \times \overline{Y} \times (\mathbb{P}^1 )^q|_{\overline{(\Gamma_f \times \mathbb{A}^q) \cap (X \times V)}^N} &\stackrel{(a)}{\leq} \overline{X} \times Y^\infty \times  (\mathbb{P}^1 )^q |_{\overline{(\Gamma_f \times \mathbb{A}^q) \cap (X \times V)}^N} , \\
\overline{X} \times Y^\infty \times (\mathbb{P}^1 )^q|_{\overline{(\Gamma_f \times \mathbb{A}^q) \cap (X \times V)}^N} &\stackrel{(b)}{\leq} \overline{X} \times \overline{Y} \times  F_q |_{\overline{(\Gamma_f \times \mathbb{A}^q) \cap (X \times V)}^N}.
\end{align*}
By Lemma \ref{containment-lemma}, (a) and (b) follow from
\begin{align*}
X^\infty \times \overline{Y} \times (\mathbb{P}^1 )^q|_{\overline{\Gamma_f \times \mathbb{A}^q}^N} &\stackrel{(a)'}{\leq} \overline{X} \times Y^\infty \times  (\mathbb{P}^1 )^q |_{\overline{\Gamma_f \times \mathbb{A}^q}^N} , \\
\overline{X} \times Y^\infty \times (\mathbb{P}^1 )^q|_{\overline{X \times V}^N} &\stackrel{(b)'}{\leq} \overline{X} \times \overline{Y} \times  F_q |_{\overline{X \times V}^N} .
\end{align*}
respectively. 
Since $\overline{\Gamma_f \times \mathbb{A}^q }=\overline{\Gamma}_f \times (\mathbb{P}^1 )^q$,
the coadmissibility of $f$ implies $(a)'$.
On the other hand, since $\overline{X \times V}=\overline{X} \times \overline{V}$, the assumption on $V$ implies $(b)'$.
This finishes the proof of (1).


Next, we prove (2). 
Set $X=\mathscr{X}^\circ$ and $Y=\mathscr{Y}^\circ$.
Since $f$ is proper we have the pushforward map of cubical abelian groups $f_\ast : \underline{z}_r (X,\ast )  \to \underline{z}_r (Y,\ast )$.
Noting that 
$\underline{z}_r(\mathscr{X},\ast ) \subset \underline{z}_r(X ,\ast  )$ and $\underline{z}_r(\mathscr{Y},\ast ) \subset \underline{z}_r(Y ,\ast  )$, it suffices to prove that the map $f_\ast$ sends $\underline{z}_r(\mathscr{X},\ast )$ to $\underline{z}_r(\mathscr{Y},\ast )$.

Fix $q \geq 0$ and take any prime cycle $V \in \underline{z}_r (\mathscr{X},q)$.
We check that 
\[ f_\ast (V) \in \underline{z}_r (\mathscr{Y},q). \]
Set $W:=|f_\ast (V)| = f(V) \subset Y \times \mathbb{A}^q$. Since $V$ is prime, $W$ is irreducible. Thus, it suffices to prove that
\[Y^\infty |_{\overline{W}^N} \leq F_q |_{\overline{W}^N},\] where $\overline{W} \subset \overline{Y} \times (\mathbb{P}^1 )^q$ is the closure of $W$. 
Consider the following commutative diagram
\[\xymatrix{
X \times  Y \times \mathbb{A}^q  \ar[r] & Y  \times \mathbb{A}^q  \\
\Gamma_f  \times \mathbb{A}^q  \cap (V \times Y) \ar[r]^(0.55){\mathrm{surj.}} \ar[u]^{\mathrm{inclusion}} & f(V)=W \ar[u]_{\mathrm{inclusion}}
}\]
where $\Gamma_f$ is the graph of $f$ and the bottom line is surjective by a trivial reason. 
Then, since $\overline{\Gamma_f \times \mathbb{A}^q } \subset \overline{X} \times \overline{Y} \times (\mathbb{P}^1 )^q $ is proper over $\overline{Y} \times (\mathbb{P}^1 )^q $ by the assumption that $f$ is right fine, 
the induced map \[\overline{X} \times \overline{Y} \times (\mathbb{P}^1 )^q  \supset \overline{\Gamma_f \times \mathbb{A}^q } \cap (V \times Y) \to \overline{f(V)}=\overline{W} \subset \overline{Y}\times (\mathbb{P}^1 )^q \] is also proper, hence surjective. Thus, we can obtain the desired inequaly of Cartier divisors on $\overline{W}^N$ by following the argument of proof of (1).
 \end{proof}

\begin{cor}\label{norm-map}
Let $k'/k$ be a finite field extension and let $\pi : \mathscr{X}_{k'} \to \mathscr{X}$ be the induced morphism, where $\mathscr{X}_{k'} := (\overline{X}_{k'} , X^\infty_{k'} )$.
Then, the composite maps 
\begin{align*}
\pi_\ast \circ \pi^\ast &: \underline{z}^r (\mathscr{X},\ast ,A) \to \underline{z}^r (\mathscr{X}_{k'} ,\ast  ,A) \to \underline{z}^r (\mathscr{X},\ast  ,A), \\
\pi_\ast \circ \pi^\ast &: \underline{z}^r (\mathscr{X},\ast  ,A)' \to \underline{z}^r (\mathscr{X}_{k'} ,\ast  ,A)' \to \underline{z}^r (\mathscr{X},\ast  ,A)'
\end{align*}
equal the multiplication by $[k':k]$.
\end{cor}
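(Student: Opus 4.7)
The plan is to reduce the statement to the classical projection formula $\pi_\ast \pi^\ast = [k':k] \cdot \mathrm{id}$ for finite flat morphisms. First I would verify that $\pi$ satisfies the hypotheses of Proposition \ref{functoriality} for both $\pi^\ast$ (coadmissible, left fine, flat) and $\pi_\ast$ (admissible, right fine, proper). Since $k'/k$ is finite, the extension $\overline{\pi} : \overline{X}_{k'} \to \overline{X}$ of $\pi^\circ$ is a finite flat surjective morphism of degree $[k':k]$, so $\pi$ is flat and proper as a morphism of $k$-schemes. The closure of $\Gamma_{\pi^\circ}$ in $\overline{X}_{k'} \times_k \overline{X}$ is the graph of $\overline{\pi}$, which is isomorphic to $\overline{X}_{k'}$ and hence proper over both factors; by Remark \ref{adm-coadm-minus}(2) this makes $\pi$ both left fine and right fine. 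Moreover, $X^\infty_{k'} = \overline{\pi}^\ast X^\infty$ by the definition of the base change $\mathscr{X}_{k'}$, so on the normalization of the graph the pullbacks of $X^\infty \times \overline{X}_{k'}$ and $\overline{X} \times X^\infty_{k'}$ coincide. Hence $\pi$ is minimal, in particular admissible and coadmissible, and Proposition \ref{functoriality} supplies both $\pi^\ast$ and $\pi_\ast$ on $\underline{z}^r(-,\ast,A)$ and on the naive variant $\underline{z}^r(-,\ast,A)'$.

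Next, by the universal property of $-\otimes_{\mathbb{Z}} A$, it suffices to treat $A = \mathbb{Z}$. By construction the maps $\pi^\ast$ and $\pi_\ast$ on the cycle complexes with modulus (and their naive variants) are simply the restrictions of the usual flat pullback and proper pushforward on the ambient cycle groups $\underline{z}^r(\mathscr{X}^\circ \times \square^q)$ and $\underline{z}^r(\mathscr{X}_{k'}^\circ \times \square^q)$. Thus it is enough to prove that $\pi_\ast \circ \pi^\ast$ equals multiplication by $[k':k]$ on algebraic cycles in the interior, the modulus conditions and cubical structure being preserved automatically by the preceding step.

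Finally, for any integral closed subscheme $V \subset \mathscr{X}^\circ \times \square^q$ meeting all faces properly, the base change $\pi \times \mathrm{id} : \mathscr{X}_{k'}^\circ \times \square^q \to \mathscr{X}^\circ \times \square^q$ is finite flat of degree $[k':k]$, and the classical projection formula for such morphisms gives $\pi_\ast \pi^\ast [V] = [k':k] \cdot [V]$ in $\underline{z}^r(\mathscr{X}^\circ \times \square^q)$. Extending $\mathbb{Z}$-linearly concludes the proof for both the modulus complex and its naive variant. I do not anticipate any serious obstacle: the whole argument is formal once the minimality of $\pi$ is noted, since the projection formula is standard for finite flat morphisms and everything else is bookkeeping inherited from Proposition \ref{functoriality}.
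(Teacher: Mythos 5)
Your proof is correct and follows the natural route: verify that $\pi$ satisfies all the hypotheses of Proposition~\ref{functoriality} in both directions (which it does: $\overline{\pi}$ is finite flat surjective, so $\pi$ is left and right fine by Remark~\ref{adm-coadm-minus}(2), and minimal since $X^\infty_{k'}$ is by definition the pullback of $X^\infty$), then invoke the classical degree formula $\pi_\ast\pi^\ast = [k':k]\cdot\mathrm{id}$ for finite flat morphisms on the ambient cycle groups, noting that both $\pi^\ast$ and $\pi_\ast$ on the modulus complexes are just restrictions of the usual operations. The paper leaves the corollary without proof, and your argument supplies exactly the intended one.
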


\section{Cube invariance}\label{section-cube-invariance}

The aim of this section is to prove the {\it cube invariance} (Thoeorem \ref{main-1}).
Recall that $\overline{\square}^{(-1)} = (\mathbb{P}^1 ,-\{\infty \})$.
Note that for any modulus pair $\mathscr{X}$, the first projection $\mathrm{pr}_1 : \mathscr{X}^\circ  \times \mathbb{A}^1 \to \mathscr{X}^\circ$ defines a morphism of modulus pairs 
\[
\mathrm{pr}_1 : \mathscr{X}  \otimes \overline{\square}^{(-1)} \to \mathscr{X}
\]
since $(\overline{\square}^{(-1)} )^\circ = \mathbb{A}^1$.
Moreover, $\mathrm{pr}_1$ is left fine, flat, and coadmissible because it extends to the map $\mathrm{pr}_1 : \overline{X} \times \mathbb{P}^1 \to \overline{X}$ and we have $\mathrm{pr}_1^\ast X^\infty = X^\infty \times \mathbb{P}^1$.

\ 

Thoeorem \ref{main-1} is a consequence the following theorem.
We prove it in \S \ref{pf-of-inv}.

\begin{thm}\label{cube-invariance}
Let $\mathscr{X}=(\overline{X},X^\infty )$ be a modulus pair, and let $A$ be an abelian group.
Then, the pullback maps
\begin{align*}
\mathrm{pr}_1^\ast &: z^r (\mathscr{X}, \ast ,A) \to z^r (\mathscr{X} \otimes \overline{\square}^{(-1)}, \ast ,A), \\
\mathrm{pr}_1^\ast &: z^r (\mathscr{X}, \ast ,A)' \to z^r (\mathscr{X} \otimes \overline{\square}^{(-1)}, \ast ,A)'
\end{align*}
are quasi-isomorphisms.
\end{thm}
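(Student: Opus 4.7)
The plan is to imitate Bloch's original proof of $\mathbb{A}^1$-homotopy invariance from \cite{B}, substituting $\overline{\square}^{(-1)}$ for $\mathbb{A}^1$ and verifying at every step that the modulus condition is preserved; the coefficient $-1$ on $\{\infty\}$ in $\overline{\square}^{(-1)}$ is precisely what makes this preservation work. First I would check that the zero section $i_0 \colon x \mapsto (x,0)$ defines a minimal (i.e.\ admissible and coadmissible) morphism $\mathscr{X} \to \mathscr{X}\otimes\overline{\square}^{(-1)}$: the key computation is $i_0^\ast(X^\infty\times\mathbb{P}^1 - \overline{X}\times\{\infty\}) = X^\infty - 0 = X^\infty$ on $\overline{X}$. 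Since $i_0$ is a closed immersion and not flat, Proposition~\ref{functoriality}(1) does not directly apply, but $i_0^\ast$ can still be defined on the subgroup of cycles $V \in \underline{z}^r(\mathscr{X}\otimes\overline{\square}^{(-1)},q,A)$ whose supports meet $\mathscr{X}^\circ\times\{0\}\times\square^q$ properly, by intersection theory. The relation $\mathrm{pr}_1 \circ i_0 = \mathrm{id}_{\mathscr{X}}$ then gives $i_0^\ast \circ \mathrm{pr}_1^\ast = \mathrm{id}$, since cycles in the image of $\mathrm{pr}_1^\ast$ are automatically of the form $W\times\mathbb{A}^1$ and meet $\{y=0\}$ properly; in particular $\mathrm{pr}_1^\ast$ is split injective on homology.

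For surjectivity, equivalently $\mathrm{pr}_1^\ast\circ i_0^\ast \simeq \mathrm{id}$, I would construct a chain homotopy via the multiplicative action of $\mathbb{A}^1$ on itself. For an irreducible cycle $V\subset \mathscr{X}^\circ\times\mathbb{A}^1_y\times\square^q$ in sufficiently general position, define $H(V)$ to be the closure in $\mathscr{X}^\circ\times\mathbb{A}^1_y\times\square^{q+1}$ of the pullback of $V$ along the map $\phi(x,y,t,s)=(x,ys,t)$ restricted to $\{s\neq 0\}$, where $\phi$ is flat. The face calculations from Bloch's proof apply verbatim: $d^1_{q+1}H(V)=V$ and $d^0_{q+1}H(V)=\mathrm{pr}_1^\ast(i_0^\ast V)$, so $\partial H + H\partial = \mathrm{id} - \mathrm{pr}_1^\ast\circ i_0^\ast$ on good cycles. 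The essential modulus check is that $H(V)$ satisfies the modulus condition for $\mathscr{X}\otimes\overline{\square}^{(-1)}$ at level $q+1$. This is verified by resolving the indeterminacy of the rational extension $\mathbb{P}^1_y\times\mathbb{P}^1_s\dashrightarrow \mathbb{P}^1$, $(y,s)\mapsto ys$, via blowing up at $(0,\infty)$ and $(\infty,0)$, and using that the pullback of $\{\infty\}$ along the resolved map is bounded by $\{y=\infty\}+\{s=\infty\}$. The negative sign in the modulus divisor of $\mathscr{X}\otimes\overline{\square}^{(-1)}$ means that the pulled-back divisor is precisely absorbed into the enlarged face divisor $F_{q+1}$ on the cubical side — a positive sign would force the inequality in the wrong direction, so the minus cube is indispensable here.

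The hard part is that both $i_0^\ast$ and the homotopy $H$ are a priori defined only on cycles in general enough position with respect to $\{y=0\}$ and to all the faces produced by $H$. This is exactly the role of the Moving-lemma (Theorem~\ref{moving-lemma} and Corollary~\ref{special-moving-lemma}): it produces, for any cycle $V\in z^r(\mathscr{X}\otimes\overline{\square}^{(-1)},q,A)$, a homologous cycle $V'$ on which the entire construction is defined. The Rigidity-lemma (Lemma~\ref{rigidity}) is then used to identify the limit cycle $d^0_{q+1}H(V')$ as exactly $\mathrm{pr}_1^\ast(i_0^\ast V')$ with the correct multiplicities (not just on supports). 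Combining these, the chain homotopy identity holds up to degenerate cycles on a subcomplex quasi-isomorphic to the full one, which finishes the proof that $\mathrm{pr}_1^\ast$ is a quasi-isomorphism.

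The na\"{i}ve variant $z^r(-,\ast,A)'$ is handled by the same construction with a simpler modulus check: the na\"{i}ve condition $(\star)'$ requires only that the closure in $\overline{X}\times\mathbb{A}^q$ avoid $|X^\infty|\times\mathbb{A}^q$, and this is preserved under $\phi$ because the closure of $\phi^{-1}V$ in $\overline{X}\times\mathbb{A}^{q+2}$ avoids $|X^\infty|\times\mathbb{A}^{q+2}$ as soon as the closure of $V$ avoids $|X^\infty|\times\mathbb{A}^{q+1}$. The same moving and rigidity lemmas produce the required good cycles, and the homotopy argument goes through unchanged.
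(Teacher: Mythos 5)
Your proposal is correct and follows essentially the same route as the paper's proof. The injectivity argument (factor $\mathrm{pr}_1^\ast$ through $z^r_w$ with $w=\{\mathscr{X}^\circ\times\{0\}\}$, retract via $i_0^\ast$, and identify $H_q(z^r_w)$ with $\mathrm{CH}^r$ by the moving lemma) is exactly the paper's. For surjectivity, your explicit homotopy $H(V)=\overline{\phi^\ast V|_{s\neq 0}}$, where $\phi(x,y,t,s)=(x,ys,t)$, coincides with the cycle the paper produces by pulling back along the coadmissible multiplication map $\mu\colon\overline{\square}^{(-1)}\otimes\overline{\square}^{(-1)}\to\overline{\square}^{(-1)}$ and then re-reading the extra $\overline{\square}^{(-1)}$ factor as the $(q+1)$-st cubical coordinate (the identification $\Phi_q$ in Lemma \ref{rigidity}); your blow-up verification that the pullback of $\{\infty\}$ along the rational map $(y,s)\mapsto ys$ is dominated by $\{y=\infty\}+\{s=\infty\}$ is precisely the content of Lemma \ref{adm-multiplication}, which the paper imports from \cite{KSY}. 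So the substance of the argument is identical; you have merely unfolded the paper's packaging.

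One small misattribution: you invoke the Rigidity Lemma ``to identify the limit cycle $d^0_{q+1}H(V')$ as exactly $\mathrm{pr}_1^\ast(i_0^\ast V')$.'' In the paper that face identification is a consequence of functoriality of cycle pullback --- $i_0^{\prime\ast}\mu^\ast=(\mu\circ i'_0)^\ast=(i_0\circ\mathrm{pr}_1)^\ast=\mathrm{pr}_1^\ast i_0^\ast$, and likewise $i_1^{\prime\ast}\mu^\ast=\mathrm{id}$ --- while Lemma \ref{rigidity} is what supplies the chain-homotopy relation $d_{q+1}\Phi_q-\Phi_{q-1}d_q=i_1^\ast-i_0^\ast$. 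Your homotopy $H$ is correct, but the multiplicity check for its faces is direct intersection theory, not a consequence of rigidity; the two lemmas play opposite roles from what you describe. This does not affect the soundness of the argument.
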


\begin{rem}\label{reason-generalization}
We can prove that for any modulus pairs $\mathscr{X}$ and $\mathscr{Y}$ such that $Y^\infty $ is effective (and non-trivial), the pullback map
\[
\underline{z}^r (\mathscr{X},\ast ) \to \underline{z}^r (\mathscr{X} \otimes \mathscr{Y},\ast )
\]
is {\it not} well-defined.
This fact shows that we need non-effective modulus pairs in order to formulate a generalization of $\mathbb{A}^1$-homotopy invariance.

\end{rem}

\subsection{Rigidity lemma}

Let $X$ be an equidimensional scheme of finite type over $k$, and let $w$ be a finite set of locally closed subsets of $X$.
For any integers $r,q \geq 0$, we define a subgroup
\[
\underline{z}^r_w (X,q) \subset \underline{z}^r (X,q)
\]
as the free abelian group generated on the set 
\[
\Set{V | \parbox{100mm}{$V$ is an irreducible closed subset of $X \times \square^q $ of codimension $r$\\ which intersects $W \times F$ properly for any face $F \subset \square^q $ and $W \in w$}}.
\]
Since the structure maps $d_i^\epsilon : \underline{z}^r (X,q) \to \underline{z}^r (X,q-1)$ and $\pi_i : \underline{z}^r (X,q-1) \to \underline{z}^r (X,q)$ restrict to the subgroups, we obtain a cubical subgroup
\[
\underline{z}^r_w (X,\ast ) \subset \underline{z}^r (X,\ast ).
\]
In particular, we have a subcomplex 
\[
z^r_w (X,\ast ) = \underline{z}^r_w (X,\ast ) / \underline{z}^r_w (X,\ast )_\mathrm{degn} \subset z^r (X,\ast ).
\]
This subcomplex plays an important role in Bloch's proof of $\mathbb{A}^1$-homotopy invariance of the higher Chow group \cite[Theorem 2.1]{B}, which leads us to the following definition.

\begin{dfn}\label{nice-subgroup}
Let $\mathscr{X}=(\overline{X},X^\infty )$ be a modulus pair and let $w$ be a finite set of locally closed subsets of $\mathscr{X}^\circ$.
Define cubical subgroups $\underline{z}^r_w (\mathscr{X},\ast ) \subset \underline{z}^r(\mathscr{X} ,\ast )$ and $\underline{z}^r_w (\mathscr{X},\ast )' \subset \underline{z}^r(\mathscr{X} ,\ast )'$ by 
\begin{align*}
\underline{z}^r_w (\mathscr{X},\ast ) &:= \underline{z}^r(\mathscr{X},\ast ) \cap \underline{z}^r_w (\mathscr{X}^\circ ,\ast ), \\
\underline{z}^r_w (\mathscr{X},\ast )' &:= \underline{z}^r(\mathscr{X},\ast )' \cap \underline{z}^r_w (\mathscr{X}^\circ ,\ast ),
\end{align*}
where the intersections are taken in $\underline{z}^r (\mathscr{X}^\circ ,\ast )$.

For any abelian group $A$, we set
\begin{align*}
\underline{z}^r_w (\mathscr{X},\ast ,A) &:= \underline{z}^r_w (\mathscr{X},\ast ) \otimes_\mathbb{Z} A, \ \ 
\underline{z}^r_w (\mathscr{X},\ast ,A)' := \underline{z}^r_w (\mathscr{X},\ast )' \otimes_\mathbb{Z} A, \\
z^r_w (\mathscr{X},\ast ,A) &:= \underline{z}^r_w (\mathscr{X},\ast ,A) / \underline{z}^r_w (\mathscr{X},\ast ,A)_\mathrm{degn} \cong \underline{z}^r_w (\mathscr{X},\ast ,A)_0 , \\
z^r_w (\mathscr{X},\ast ,A)' &:= \underline{z}^r_w (\mathscr{X},\ast ,A)' / \underline{z}^r_w (\mathscr{X},\ast ,A)'_\mathrm{degn} \cong \underline{z}^r_w (\mathscr{X},\ast ,A)'_0 .
\end{align*}
Note that we have natural inclusions
\begin{align*}
z^r_w (\mathscr{X},\ast ,A) &\subset z^r (\mathscr{X},\ast ,A),\\
z^r_w (\mathscr{X},\ast ,A)' &\subset z^r (\mathscr{X},\ast ,A)'.
\end{align*}
\end{dfn}

Now, consider the tensor product 
\[\mathscr{X} \otimes \overline{\square}^{(-1)} = (\overline{X} \times \mathbb{P}^1 , X^\infty \times \mathbb{P}^1 - \overline{X} \times \{\infty \})\]
of modulus pairs $\mathscr{X}=(\overline{X},X^\infty )$ and $\overline{\square}^{(-1)}=(\mathbb{P}^1 ,-\{\infty \})$. Set
\[
w =\{ \mathscr{X}^\circ \times \{0,1\} \}, \ \ \mathscr{X}^\circ \times \{0,1\} \subset \mathscr{X}^\circ \times \mathbb{A}^1 .
\]
Let $\epsilon \in \{0,1\}$.
By Lemma \ref{containment-lemma},
noting that the inclusions
\begin{align*}
i_\epsilon &: \mathscr{X} \cong \mathscr{X} \otimes \{\epsilon \} \hookrightarrow \mathscr{X} \otimes \overline{\square}^{(-1)}, \\
\overline{i}_\epsilon &: \mathscr{X} \cong \mathscr{X} \otimes \{\epsilon \} \hookrightarrow \mathscr{X} \otimes \mathbb{A}^1, 
\end{align*}
are minimal (see Definition \ref{adm-coadm}), we obtain well-defined pullback maps of complexes 
\begin{align*}
i_\epsilon^\ast &: z^r_w (\mathscr{X} \otimes \overline{\square}^{(-1)} ,\ast ,A) \to z^r(\mathscr{X},\ast ,A), \\
\overline{i}_\epsilon^\ast &: z^r_w (\mathscr{X} \otimes \mathbb{A}^1 ,\ast ,A)' \to z^r(\mathscr{X},\ast ,A)' .
\end{align*}
We have the following lemma.
\begin{lem}\label{rigidity} {\bf (rigidity Lemma)}
The maps $i_0^\ast$ and $i_1^\ast$ are homotopic.
Moreover, the maps $\overline{i}_0^\ast$ and $\overline{i}_1^\ast$ are homotopic.
\end{lem}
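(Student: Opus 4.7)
The plan is to construct explicit chain homotopies by reinterpreting the $\mathbb{A}^1$-factor of $\mathscr{X}\otimes\overline{\square}^{(-1)}$ (respectively $\mathscr{X}\otimes\mathbb{A}^1$) as an additional cube coordinate. Define
\[
H_q : \underline{z}^r_w(\mathscr{X} \otimes \overline{\square}^{(-1)},q,A) \longrightarrow \underline{z}^r(\mathscr{X},q+1,A)
\]
by sending an integral cycle $V \subset \mathscr{X}^\circ \times \mathbb{A}^1 \times \square^q$ to itself, now viewed as a cycle in $\mathscr{X}^\circ \times \square^{q+1}$ under the identification that places the $\mathbb{A}^1$-factor as the \emph{first} cube coordinate. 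The naive variant $\overline{H}_q$ is defined by the identical recipe, using $\mathbb{A}^1$ and $\mathbb{A}^{q+1}$ in place of $\mathbb{P}^1$ and $(\mathbb{P}^1)^{q+1}$.

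First I would verify well-definedness of $H_q$. Codimension is obviously preserved. Face-properness against a face of the form $\square\times F'$ of $\square^{q+1}$ is exactly the original face condition on $V$; against a face $\{\epsilon\}\times F'$ with $\epsilon\in\{0,1\}$ it is precisely the $w$-condition imposed by $w=\{\mathscr{X}^\circ\times\{0,1\}\}$. The modulus condition is the heart of the argument: using the elementary divisor identity
\[
F_{q+1} \;=\; \{\infty\}\times(\mathbb{P}^1)^q \;+\; \mathbb{P}^1\times F_q
\]
on $(\mathbb{P}^1)^{q+1}$, the modulus inequality
\[
\nu_V^\ast\bigl((X^\infty\times\mathbb{P}^1-\overline{X}\times\{\infty\})\times(\mathbb{P}^1)^q\bigr)\;\leq\; \nu_V^\ast\bigl(\overline{X}\times\mathbb{P}^1\times F_q\bigr)
\]
characterizing membership of $V$ in $\underline{z}^r(\mathscr{X}\otimes\overline{\square}^{(-1)},q)$ is equivalent, after moving the $-\overline{X}\times\{\infty\}\times(\mathbb{P}^1)^q$ term to the right-hand side, to
\[
\nu_V^\ast\bigl(X^\infty\times(\mathbb{P}^1)^{q+1}\bigr)\;\leq\; \nu_V^\ast\bigl(\overline{X}\times F_{q+1}\bigr),
\]
which is exactly the modulus condition for $V$ as an element of $\underline{z}^r(\mathscr{X},q+1)$. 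This cancellation between the $-\overline{X}\times\{\infty\}$ contribution and the missing $i=1$ summand of $F_{q+1}$ is the whole point of introducing the minus cube. For $\overline{H}_q$, the naive modulus condition $\nu_V^\ast(X^\infty\times\mathbb{A}^{q+1})\leq 0$ passes through trivially under the reinterpretation with no rearrangement at all.

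Next, $H_q$ and $\overline{H}_q$ carry degenerate cycles to degenerate cycles --- a degeneracy at original cube position $j$ becomes a degeneracy at position $j+1$ in the reinterpreted cube --- so both descend to the quotient complexes $z^r_w(\mathscr{X}\otimes\overline{\square}^{(-1)},\ast,A)\to z^r(\mathscr{X},\ast,A)$ and the naive analogue. Finally, I would compute the chain-homotopy identity directly. Writing $d=\sum_{i=1}^{q+1}(-1)^i(d_i^1-d_i^0)$ for the cubical boundary applied to $H_q(V)$, the $i=1$ contribution yields $(-1)(i_1^\ast-i_0^\ast)(V)=i_0^\ast(V)-i_1^\ast(V)$, because $d_1^\epsilon$ in the new coordinates sets the reinterpreted $\mathbb{A}^1$-coordinate to $\epsilon$. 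The remaining terms with $i\geq 2$ recombine, after reindexing $j=i-1$ with a sign flip, into $-H_{q-1}(dV)$. Hence
\[
d\circ H_q \;+\; H_{q-1}\circ d \;=\; i_0^\ast - i_1^\ast,
\]
and the identical calculation gives the corresponding identity for $\overline{H}_q$ and $\overline{i}_\epsilon^\ast$.

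The main obstacle is the modulus-condition check; everything else is a routine cubical chain-homotopy argument in the style of Bloch. The minus sign on $\{\infty\}$ in $\overline{\square}^{(-1)}$ has been engineered precisely so that the two modulus conditions match under the reinterpretation, which explains why the same strategy would \emph{not} succeed with $\overline{\square}=(\mathbb{P}^1,\{\infty\})$ in place of $\overline{\square}^{(-1)}$.
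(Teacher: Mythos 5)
Your argument is correct and is essentially the paper's own proof: the paper likewise observes that reinterpreting the $\mathbb{A}^1$-factor as an extra cube coordinate yields an identification $\underline{z}^r_w(\mathscr{X}\otimes\overline{\square}^{(-1)},q) = \underline{z}^r(\mathscr{X},q+1)$ (and similarly for the na\"{\i}ve variant), and that the resulting maps on normalized complexes furnish the desired chain homotopies. You have merely spelled out the modulus-condition cancellation and the boundary computation, which the paper leaves implicit.
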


In particular, the pullback maps induced by $i_\epsilon$ on na\"{i}ve version
\[
i_\epsilon^\ast : z^r_w (\mathscr{X} \otimes \overline{\square}^{(-1)} ,\ast ,A)' \to z^r(\mathscr{X},\ast ,A)' , \ \ (\epsilon \in \{0,1\})
\]
are also homotopic, since it factors as \[z^r_w (\mathscr{X} \otimes \overline{\square}^{(-1)} ,\ast ,A)' \hookrightarrow z^r_w (\mathscr{X} \otimes \mathbb{A}^1 ,\ast  , A)' \xrightarrow{\overline{i}_\epsilon^\ast } z^r(\mathscr{X},\ast ,A)',\] 
where the first map is the inclusion.

\begin{proof}
We may assume that $A=\mathbb{Z}$.
Note that the identity map
\[
(\mathscr{X}^\circ \times \mathbb{A}^1 ) \times \mathbb{A}^q \stackrel{=}{\to} \mathscr{X}^\circ \times (\mathbb{A}^1 \times \mathbb{A}^q )
\]
induces the canonical identifications
\begin{align*}
\tilde{\Phi}_q &: \underline{z}^r_w (\mathscr{X} \otimes \overline{\square}^{(-1)} ,q ) \xrightarrow{=} \underline{z}^r(\mathscr{X},q+1), \\
\tilde{\overline{\Phi}}_q &: \underline{z}^r_w (\mathscr{X} \otimes \mathbb{A}^1 ,q )' \xrightarrow{=} \underline{z}^r(\mathscr{X},q+1)' ,
\end{align*}
for every $q$.
Since these maps send a degenerate cycle to a degenerate cycle, they induce
\begin{align*}
\Phi_q &: z^r_w (\mathscr{X} \otimes \overline{\square}^{(-1)} ,q ) \to z^r(\mathscr{X},q+1), \\
\overline{\Phi}_q &: z^r_w (\mathscr{X} \otimes \mathbb{A}^1 ,q )' \to z^r(\mathscr{X},q+1)'.
\end{align*}
Then, these maps satisfy the homotopy relations
\begin{align*}
d_{q+1} \circ \Phi_q -\Phi_{q-1} \circ d_q &= i_1^\ast - i_0^\ast  ,\\
d_{q+1} \circ \overline{\Phi}_q -\overline{\Phi}_{q-1} \circ d_q &= \overline{i}_1^\ast - \overline{i}_0^\ast .
\end{align*}
This finishes the proof.
 \end{proof}

\subsection{A moving lemma: powered generic translation}
We prove the following key result, which is used frequently in the rest of the paper.
Fix an algebraic closure $\overline{k}$ of the base field $k$.

\begin{thm}\label{moving-lemma}{\bf (moving lemma)}
Let $\mathscr{Y}=(\overline{Y},Y^\infty )$ be a modulus pair.
Suppose that we are given the following data:
\begin{itemize}
\item[(1)]a connected algebraic $k$-group $G$ acting on $\overline{Y}$ such that the map
\[
\gamma : G \times \overline{Y} \to \overline{Y}, \ \ (y,g) \mapsto g \cdot y
\]
satisfies that $\gamma^\ast (Y^\infty )=G \times Y^\infty$. Note that this implies automatically that $\gamma$ restricts to 
\[
\gamma^\circ : G \times \mathscr{Y}^\circ \to \mathscr{Y}^\circ.
\]

\item[(2)]
A finitely generated purely transcendental field extension $K \supset k$ and 
a $K$-morphism $\psi : \mathbb{A}^1_K \to G_K$ such that $\psi (0) = 1_{G_K }$ and 
for any $\tau \in \mathbb{A}^1 (\overline{k})$ with $\tau \neq 0$, 
the image of the composite
\[
\mathrm{pr} \circ  \psi \circ  (K \otimes_k \tau ) : 
\mathrm{Spec}(K \otimes_k \overline{k})
\xrightarrow{K \otimes_k \tau}
\mathbb{A}^1_K
\xrightarrow{\psi }
G_K
\xrightarrow{\mathrm{pr}} G
\]
is the generic point of $G$, where $\mathrm{pr}$ is the natural projection.

\item[(3)] A finite collection $w$  of irreducible locally closed subsets of $\mathscr{Y}^\circ = \overline{Y}-|Y^\infty |$ such that 
for any $W \in w$,
the map $G \times W \to \mathscr{Y}^\circ $ is surjective and all fibers have the same dimension.
\end{itemize}
Then, the following inclusions are quasi-isomorphisms:
\[
z^r_w (\mathscr{Y} ,\ast ,A) \hookrightarrow z^r(\mathscr{Y} ,\ast ,A),
\ \ z^r_w (\mathscr{Y} ,\ast ,A)' \hookrightarrow z^r(\mathscr{Y} ,\ast ,A)'.
\]
\end{thm}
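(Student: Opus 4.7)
The plan is to follow the Bloch-style group-action moving lemma adapted to the modulus setting, treating the two complexes $z^r(\mathscr{Y},\ast,A)$ and $z^r(\mathscr{Y},\ast,A)'$ uniformly (by the same construction, substituting $F_q \leftrightarrow 0$ and $(\mathbb{P}^1)^q \leftrightarrow \mathbb{A}^q$) and reducing to $A=\mathbb{Z}$ by $\mathbb{Z}$-linearity of the homotopy. For each $Z \in \underline{z}^r(\mathscr{Y},q)$, I will produce a chain-level homotopy between $Z$ and a translate lying in $\underline{z}^r_w$, which identifies the inclusion $z^r_w \hookrightarrow z^r$ as a deformation retract, hence a quasi-isomorphism.

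\textbf{Step 1 (homotopy over $K$).} Base-change $Z$ to $Z_K$ and form the flat composite
\[
\Theta \ := \ \gamma_K \circ (\psi \times \mathrm{id}) \ : \ \mathbb{A}^1_K \times \mathscr{Y}^\circ_K \longrightarrow \mathscr{Y}^\circ_K.
\]
Set $H(Z) := (\Theta \times \mathrm{id}_{\square^q})^\ast Z_K$, viewed as a cycle on $\mathscr{Y}^\circ_K \times \square^{q+1}_K$ by treating $\mathbb{A}^1_K$ as the first cube coordinate. Using $\psi(0)=1_{G_K}$, the face $d_1^0 H(Z) = Z_K$, while $d_1^1 H(Z) = (\psi(1)^{-1})_\ast Z_K$; the remaining face and degeneracy operators act only on the $\square^q$-factor and hence commute with $H$. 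The usual sign-bookkeeping yields $\partial H + H\partial = \mathrm{id} - (\psi(1)^{-1})_\ast$ on $z^r(\mathscr{Y}_K,\ast)$, so $H$ is a chain homotopy between the identity and translation by a $K$-point of $G_K$.

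\textbf{Step 2 (modulus condition and generic position).} Pulling back $\gamma^\ast Y^\infty = G \times Y^\infty$ through $\psi$ gives $\Theta^\ast Y^\infty = \mathbb{A}^1_K \times Y^\infty_K$, so $\Theta \times \mathrm{id}$ picks up no additional poles in the $\mathbb{A}^1_K$-direction. Combined with Lemma~\ref{containment-lemma}, this propagates the modulus condition from $Z$ to $H(Z)$ and to its specialization $(\psi(1)^{-1})_\ast Z_K$; the identical argument, with $F_{\bullet}$ replaced by $0$, handles the na\"{i}ve variant. For proper intersection, condition~(3) plus a Kleiman-style incidence computation on $\{(g,x) \in G \times Z : g\cdot x \in W \times F\}$ produces a dense open $\Omega \subset G$ such that $g\cdot Z$ meets every $W \times F$ ($W\in w$, $F \subset \square^q$ face) properly for $g\in\Omega$. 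Condition~(2), asserting that $\psi(\tau)$ dominates $G$ for any $\tau \neq 0 \in \mathbb{A}^1(\overline{k})$, then places $(\psi(1)^{-1})_\ast Z_K \in \underline{z}^r_w(\mathscr{Y}_K, q)$.

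\textbf{Step 3 (descent from $K$ to $k$).} Spread $\psi$ to a $k$-morphism $\Psi : \mathbb{A}^1 \times U \to G$ on a dense open $U \subset \mathbb{A}^N$ with $k(U) = K$, and spread $H(Z)$ correspondingly to a cycle $\tilde H(Z)$ on $\mathscr{Y}^\circ \times U \times \square^{q+1}$. For $u \in U(k)$, restriction along $\{u\} \hookrightarrow U$ preserves faces, degeneracies, and the modulus condition (by Lemma~\ref{containment-lemma}), yielding an honest chain homotopy in $z^r(\mathscr{Y},\ast)$ between $Z$ and $\Psi_u(1)^{-1}_\ast Z$. When $k$ is infinite, the preimage of $\Omega$ in $U$ is a nonempty open, so a $k$-rational $u_0$ in it lands $\Psi_{u_0}(1)^{-1}_\ast Z$ in $\underline{z}^r_w(\mathscr{Y}, q)$, completing the retraction. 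For finite $k$, reduce to the infinite case by pulling back to two finite extensions of coprime degree and applying Corollary~\ref{norm-map}'s relation $\pi_\ast \pi^\ast = [k':k]\cdot \mathrm{id}$ to eliminate torsion. I expect this descent step — ensuring that specialization is compatible with the modulus condition across general base fields, without invoking $\mathbb{A}^1$-homotopy invariance (which is itself a downstream consequence of this moving lemma) — to be the main technical obstacle.
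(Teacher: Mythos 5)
Your Steps 1 and 3 follow the same broad outline as the paper (generic translation over $K$, genericity of $\psi(\tau)$ for proper intersection, specialization/norm tricks to descend to $k$ and handle finite fields), but Step 2 contains a genuine gap that the paper's entire ``powered generic translation'' construction exists to repair. You claim that since $\Theta^\ast Y^\infty = \mathbb{A}^1_K\times Y^\infty_K$, the homotopy cycle $H(Z)=\Psi^\ast(Z_K\times\mathbb{A}^1)$ inherits the modulus condition from $Z$ via Lemma \ref{containment-lemma}. This is false. The modulus condition for a cycle in $\mathscr{Y}^\circ_K\times\square^{q+1}$ is an inequality on the normalization of its closure in $\overline{Y}_K\times(\mathbb{P}^1)^{q+1}$, where the homotopy parameter $t$ is compactified and the divisor $F_{q+1}$ allows the component $(\mathbb{P}^1)^q\times\{\infty\}$ only with multiplicity one. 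The isomorphism $\overline{\Psi}$ preserving $Y^\infty_K\times\mathbb{A}^1$ only controls the locus $t\neq\infty$; at $t=\infty$ the translation $\psi(t)\cdot y$ degenerates, and the closure of $H(Z)$ can meet $Y^\infty_K\times(\mathbb{P}^1)^q\times\{\infty\}$ with arbitrarily large multiplicity. The paper's own Lemma \ref{Archimedean-property} yields only
\[
Y^\infty_K\times(\mathbb{P}^1)^q\times\mathbb{P}^1\big|_{\overline{h_q(V)}^N}\ \leq\ \overline{Y}_K\times F_q\times\mathbb{P}^1\big|_{\overline{h_q(V)}^N}\ +\ d\cdot\overline{Y}_K\times(\mathbb{P}^1)^q\times\{\infty\}\big|_{\overline{h_q(V)}^N}
\]
for some $d=d_V$ depending on $V$, which is strictly weaker than the modulus condition when $d_V>1$.

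The missing idea is the pushforward along the $d$-th power map $\rho^d:t\mapsto t^d$: since $(\overline{\rho}^d)^\ast\{\infty\}=d\cdot\{\infty\}$, pushing $h_q(V)$ forward along $\mathrm{id}\times\rho^d$ converts the inequality above into one with coefficient $1$ on $\{\infty\}$, so $h^d_q(V):=(f^d_q)_\ast\Psi^\ast(V_K\times\mathbb{A}^1)$ does satisfy the modulus condition for $d\geq d_V$. This forces two further adjustments you would also need: one must take the difference $\mathcal{H}^d_q=h^{d+1}_q-h^d_q$ so that the face at $t=0$ returns $V_K$ rather than $d\cdot V_K$ (the power map has degree $d$ over $0$), and since $d_V$ varies with $V$ the homotopy is only defined uniformly on finitely generated subcomplexes, after which one passes to the colimit. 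One must also re-verify proper intersection with $W\times F$ after the finite pushforward, which requires checking the fibers $(\rho^d)^{-1}(\tau)$ for $\tau\neq 0$ --- this is where hypothesis (2) for general $\tau\in\mathbb{A}^1(\overline{k})\setminus\{0\}$ (not just $\tau=1$) is actually used. Without these ingredients your homotopy does not land in $\underline{z}^r(\mathscr{Y}_K,q+1)$, and the argument for the non-na\"{i}ve complex collapses.
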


\begin{rem}\label{remark-normalized}
Note that the theorem shows that 
\[
\underline{z}^r_w (\mathscr{Y} ,\ast ,A)_0 \hookrightarrow \underline{z}^r(\mathscr{Y} ,\ast ,A)_0 ,
\ \ \underline{z}^r_w (\mathscr{Y} ,\ast ,A)'_0 \hookrightarrow \underline{z}^r(\mathscr{Y} ,\ast ,A)'_0 
\]
are also quasi-isomorphisms (see Lemma \ref{normalization}).
\end{rem}

\begin{cor}\label{special-moving-lemma} 
Let $\overline{X}=(\overline{X},X^\infty )$ be a modulus pair over $k$.
For integers $n \geq 0$ and $m_1 ,\dots ,m_n \in \mathbb{Z}$, we define a modulus pair $\mathscr{Y}$ by 
\[
\mathscr{Y} := \mathscr{X} \otimes \bigotimes_{i=1}^{n} \overline{\square }^{(m_i )} ,
\]
where $\overline{\square }^{(m)} = (\mathbb{P}^1 ,m \cdot \{\infty \})$ if $m \in \mathbb{Z}-\{0\}$ and $\overline{\square }^{(0)} := (\mathbb{A}^1 ,\emptyset ) = \mathbb{A}^1$.
Consider the product of algebraic groups $G := (\mathbb{G}_a )^{n}$ and the action $G \curvearrowright \mathscr{Y}^\circ = \mathscr{X}^\circ \times \mathbb{A}^n $ defined for $\bvec{t}=(t_1 ,\dots ,t_n ) \in G$ and for $y=(x,s_1 ,\dots ,s_n ) \in \mathscr{X}^\circ \times \mathbb{A}^n $ by
\[
\bvec{t} \cdot y := (x,s_1 + t_1 ,\dots ,s_n + t_n ).
\]
Let $w$ be a finite collection
of faces of $\mathscr{Y}^\circ = \mathscr{X}^\circ \times \mathbb{A}^n$.
Then, the following inclusions are quasi-isomorphisms:
\[
z^r_w (\mathscr{Y} ,\ast ,A) \hookrightarrow z^r(\mathscr{Y} ,\ast ,A), \ \ 
z^r_w (\mathscr{Y} ,\ast ,A)' \hookrightarrow z^r(\mathscr{Y} ,\ast ,A)'.
\]
\end{cor}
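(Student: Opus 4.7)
The plan is to reduce to the general moving lemma (Theorem \ref{moving-lemma}) by exhibiting the required data for the setup of the corollary. The group $G = (\mathbb{G}_a)^n$ and the collection $w$ of faces are already given; what remains is to verify that (i) the translation action on the interior extends to $\overline{Y}$ with the compatibility $\gamma^*Y^\infty = G \times Y^\infty$, (ii) there is a suitable morphism $\psi$ from a purely transcendental extension, and (iii) the orbit map $G \times W \to \mathscr{Y}^\circ$ is surjective with equidimensional fibers for every $W \in w$.

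For (i), I would decompose $\overline{Y} = \overline{X} \times \prod_i \overline{Y}_i^\bullet$, where $\overline{Y}_i^\bullet = \mathbb{P}^1$ if $m_i \neq 0$ and $\overline{Y}_i^\bullet = \mathbb{A}^1$ otherwise. The $i$-th copy of $\mathbb{G}_a$ acts on $\overline{Y}_i^\bullet$ by translation; in the $\mathbb{P}^1$ case, this extends to the morphism $(t,[a:b]) \mapsto [a+tb:b]$, which fixes $[1:0]=\infty_i$ pointwise. A direct local-coordinate calculation (using $u = b/a$ near $\infty_i$) shows $\gamma^*\{\infty_i\} = \mathbb{G}_a \times \{\infty_i\}$ as Cartier divisors. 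Since $G$ acts trivially on $\overline{X}$ and on each $\{\infty_i\}$, combining the contributions yields $\gamma^*Y^\infty = G \times Y^\infty$.

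For (ii), I would take $K = k(s_1,\dots,s_n)$ and define $\psi: \mathbb{A}^1_K \to G_K$ by $t \mapsto (s_1t,\dots,s_nt)$, so $\psi(0) = 0 = 1_{G_K}$. For $\tau \in \mathbb{A}^1(\overline{k})\setminus\{0\}$, the composite $\mathrm{pr}\circ\psi\circ(K\otimes_k\tau)$ corresponds to the $k$-algebra map $k[x_1,\dots,x_n] \to K \otimes_k \overline{k}$, $x_i \mapsto s_i \otimes \tau$; since $\tau \neq 0$ and $s_1,\dots,s_n$ are algebraically independent over $k$, this map is injective, so the scheme-theoretic image is the generic point of $G$. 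For (iii), each face has the form $W = \mathscr{X}^\circ \times F$ with $F$ obtained by fixing some coordinates of $\mathbb{A}^n$ to $0$ or $1$. Given any target $(x,s) \in \mathscr{X}^\circ \times \mathbb{A}^n$ and any basepoint $(x,f) \in W$, translation by $s-f \in G$ carries the latter to the former, so $G \times W \to \mathscr{Y}^\circ$ is surjective; the fiber over any target is parametrized by the free coordinates of $F$ and thus has constant dimension $\dim F$.

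With hypotheses (1)--(3) verified, Theorem \ref{moving-lemma} immediately produces both quasi-isomorphisms. The only point requiring genuine care is (i): one must verify that translation on $\mathbb{P}^1$ pulls back $\{\infty\}$ with multiplicity one, so that $\gamma^*(m_i\{\infty_i\}) = G \times m_i\{\infty_i\}$ as Cartier divisors even when $|m_i|$ is large or $m_i$ is negative. Once this local computation on $\mathbb{P}^1$ is in hand, the remainder of the proof is routine bookkeeping of products, and the $m_i = 0$ factors need no extension argument at all since the action lives natively on $\mathbb{A}^1$.
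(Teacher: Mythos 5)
Your proposal is correct and follows essentially the same route as the paper: extend the translation action to each $\mathbb{P}^1$-factor (the Krishna--Park action fixing $\infty$ and preserving the divisor $\{\infty\}$), take $K$ to be the function field of $G$ with $\psi(t)=t\cdot\bvec{v}$ for $\bvec{v}$ the generic point, check genericity of $\tau\cdot\bvec{v}$ for $\tau\neq 0$ via algebraic independence, and invoke Theorem \ref{moving-lemma}. Your verifications of $\gamma^\ast Y^\infty = G\times Y^\infty$ and of condition (3) for faces are points the paper treats as immediate, so you have only added detail, not changed the argument.
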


\begin{proof}
First, note that the action 
$G \curvearrowright \mathscr{Y}^\circ = \mathscr{X}^\circ \times \mathbb{\mathbb{A} }^n $
naturally extends to an action
$
G  \curvearrowright \overline{Y},
$
since the additive group $\mathbb{G}_a$ acts on $\mathbb{P}^1$ by 
$t \cdot ([S_0 : S_1]) = [S_0 : S_1 + t \cdot S_0 ]$ (This action is due to Krishna-Park).
Consider the map $\gamma : G \times \overline{Y} \to \overline{Y}, (\bvec{t}, y) \mapsto \bvec{t} \cdot y$.
Then, $\gamma^\ast (Y^\infty ) = G \times Y^\infty$ trivially holds.
Let $K=k(t_1 ,\dots ,t_n )$ be the function field of $G=(\mathbb{G}_a )^n$ and consider a $K$-morphism
\[
\mathbb{A}^1_K \to G_K = (\mathbb{G}_a )^n_K , \ t \mapsto t \cdot \bvec{v},
\]
where $\bvec{v}=(v_1 ,\dots ,v_n ) \in G$ is the generic point of $G$ and $t \cdot \bvec{v}=(tv_1 ,\dots ,tv_n )$.
Then, we can see that for any closed point $\tau \in \mathbb{A}^1 - \{0\}$, the point $\tau \cdot \bvec{v}$ lies over a generic point of $G$.
Indeed, the $K(\tau )( := K \otimes_k k(\tau))$-valued point $\tau \cdot \bvec{v} \to G_{K(\tau )}$ corresponds to the surjective $k$-homomorphism $K(\tau )[t_1 ,\cdots ,t_n ] \to K(\tau )$ which sends each $t_i$ to $\tau \cdot t_i \in K(\tau )$.
We want to prove that the image of the composite $\tau \cdot \bvec{v} \to G_{K(\tau )} \to G$ is generic.
This is equivalent to that the kernel of the composite $c : k[t_1 ,\dots ,t_n ] \to K(\tau )[t_1 ,\cdots ,t_n ] \to K(\tau )$ is zero.
Since $\tau $ is algebraic over $k$, the images $c(t_1 ) = \tau \cdot t_1 ,\dots ,c(t_n )=\tau \cdot t_n \in K(\tau )$ have no algebraic relation over $k$, which shows the desired statement.
Thus, we can apply Theorem \ref{moving-lemma}.
This finishes the proof.
 \end{proof}

\

In the following, we prove Theorem \ref{moving-lemma}.
Since the following argument is independent of $A$, we assume that $A=\mathbb{Z}$ without loss of generality, in order to simplify the notation.
Moreover, using Corollary \ref{norm-map}, we may assume that the base field is {\it infinite} by considering a tower of field extensions of degree prime to a given prime $p$.

Noting that $\underline{z}^r (\mathscr{Y},\ast ) \cong z^r (\mathscr{Y},\ast ) \oplus \underline{z}^r (\mathscr{Y},\ast )_{\mathrm{degn}}$ and $\underline{z}^r_w (\mathscr{Y} ,\ast ) \cong z^r_w (\mathscr{Y},\ast ) \oplus \underline{z}^r_w (\mathscr{Y},\ast )_{\mathrm{degn}}$ (and the similar formulae for the na\"{i}ve variant), it suffices to prove that the inclusions
\[
\underline{z}^r_w (\mathscr{Y},\ast ) \hookrightarrow \underline{z}^r (\mathscr{Y},\ast ),
\ \ \underline{z}^r_w (\mathscr{Y},\ast )' \hookrightarrow \underline{z}^r (\mathscr{Y},\ast )'
\]
are quasi-isomorphisms of complexes.
This is equivalent to show that the quotient complex $\underline{z}^r (\mathscr{Y} ,\ast  )^{(\prime )} / \underline{z}^r_w (\mathscr{Y} ,\ast  )^{(\prime )}$ is acyclic.
Consider the projection $\pi : \mathscr{Y}_K \to \mathscr{Y}$, where ${-}_K = - \otimes_k K$ denotes base extension, where
\[\mathscr{Y}_K = (\overline{Y}_K ,Y^\infty_K ) = (\overline{Y} \otimes_k K ,Y^\infty  \otimes_k K).\] Then, $\pi$ induces a canonical pullback maps \[\pi^\ast : \underline{z}^r (\mathscr{Y},\ast )^{(\prime )} \to \underline{z}^r (\mathscr{Y}_K ,\ast )^{(\prime )} .\]
To prove that $\underline{z}^r (\mathscr{Y} ,\ast  )^{(\prime )} / \underline{z}^r_w (\mathscr{Y} ,\ast  )^{(\prime )}$ is acyclic, it suffices to prove the following assertions:

\begin{prop}\label{key-prop-1-2}
(1)
The induced map
\[
\overline{\pi}^\ast : \underline{z}^r (\mathscr{Y} ,\ast  )^{(\prime )} / \underline{z}^r_w (\mathscr{Y} ,\ast  )^{(\prime )} \to \underline{z}^r (\mathscr{Y}_K ,\ast  )^{(\prime )} / \underline{z}^r_w (\mathscr{Y}_K ,\ast  )^{(\prime )}
\]
on quotient complexes is injective on homology groups.

(2)
The map $\overline{\pi}^\ast$ is zero on homology groups.
\end{prop}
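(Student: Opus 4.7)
The plan is to adapt Bloch's proof of $\mathbb{A}^1$-homotopy invariance for the ordinary higher Chow groups to the modulus setting, replacing the standard $\mathbb{A}^1$-homotopy by the rigidity lemma (Lemma \ref{rigidity}) and using the $K$-family $\psi$ as a one-parameter path from the identity of $G$ to its generic point.

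For part (1), I would construct a left inverse to $\bar{\pi}^\ast$ on homology by specialization at the identity $1_G \in G(k)$. Since $K/k$ is finitely generated and purely transcendental, $K$ is the filtered colimit of the coordinate rings $\mathcal{O}_G(U)$ as $U$ ranges over affine open neighborhoods of $1_G$ in $G$. Given a class represented by $\alpha$ on $\mathscr{Y}$ such that $\pi^\ast \alpha = \partial \beta + \gamma$ with $\gamma \in \underline{z}^r_w(\mathscr{Y}_K, \ast)^{(\prime)}$, the equation spreads out to $\pi_U^\ast \alpha = \partial \tilde{\beta} + \tilde{\gamma}$ on $\mathscr{Y} \otimes (U, \emptyset)$ for some sufficiently small $U$; after shrinking $U$ further to arrange proper intersection of $\tilde{\beta}$ and $\tilde{\gamma}$ with $\mathscr{Y} \times \{1_G\} \times F$ for all faces $F$, restricting to the fiber over $1_G$ recovers $\alpha = \partial \beta_1 + \gamma_1$ on $\mathscr{Y}$ with $\gamma_1 \in \underline{z}^r_w(\mathscr{Y}, \ast)^{(\prime)}$, whence $[\alpha]=0$.

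For part (2), I would use $\psi$ to define a pullback operation $\Psi^\ast : \underline{z}^r(\mathscr{Y}_K, \ast)^{(\prime)} \to \underline{z}^r(\mathscr{Y}_K \otimes \overline{\square}^{(-1)}, \ast)^{(\prime)}$ along the action morphism $\Psi^\circ : \mathbb{A}^1_K \times \mathscr{Y}^\circ_K \to \mathscr{Y}^\circ_K$, $(t, y) \mapsto \psi(t) \cdot y$, relying on the compatibility $\gamma^\ast Y^\infty = G \times Y^\infty$ to preserve the modulus condition (analogous to Proposition \ref{functoriality}, handling the closure of the graph via Lemma \ref{containment-lemma}). For a closed cycle $\alpha \in \underline{z}^r(\mathscr{Y}, q)_0^{(\prime)}$, the cycle $\Psi^\ast \pi^\ast \alpha$ satisfies $i_0^\ast \Psi^\ast \pi^\ast \alpha = \pi^\ast \alpha$ (since $\psi(0)=1_{G_K}$), while $i_1^\ast \Psi^\ast \pi^\ast \alpha$ is the translate of $\pi^\ast \alpha$ by the generic point of $G$. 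The hypothesis that $G \times W \to \mathscr{Y}^\circ$ is surjective with equidimensional fibers for each $W \in w$, combined with the genericity of $\mathrm{pr} \circ \psi \circ (K \otimes \tau)$ for $\tau \neq 0$, shows by a dimension count that every component of $\Psi^\ast \pi^\ast \alpha$ meets $W_K \times \mathbb{A}^1_K \times F$ in the expected codimension; in particular $\Psi^\ast \pi^\ast \alpha$ lies in $\underline{z}^r_{\{\mathscr{Y}^\circ_K \times \{0,1\}\}}(\mathscr{Y}_K \otimes \overline{\square}^{(-1)}, q)^{(\prime)}$ and $i_1^\ast \Psi^\ast \pi^\ast \alpha$ lies in $\underline{z}^r_{w_K}(\mathscr{Y}_K, q)^{(\prime)}$. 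Lemma \ref{rigidity} then gives $[\pi^\ast \alpha] = [i_1^\ast \Psi^\ast \pi^\ast \alpha] = 0$ in the quotient homology.

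The main obstacle will be rigorously establishing the chain map $\Psi^\ast$ and verifying the containments $\Psi^\ast \pi^\ast \alpha \in \underline{z}^r_{\{\mathscr{Y}^\circ_K \times \{0,1\}\}}$ and $i_1^\ast \Psi^\ast \pi^\ast \alpha \in \underline{z}^r_{w_K}$. Since $\psi$ need not extend to $\mathbb{P}^1_K \to G_K$, one must analyze the closure of the graph of $\Psi^\circ$ inside $\overline{Y}_K \times \mathbb{P}^1_K \times \overline{Y}_K$ and apply Lemmas \ref{effectivity-criterion} and \ref{containment-lemma} to transport effectivity across the resulting finite surjective maps on normalizations; the proper-intersection dimension count, which is the technical heart of the moving lemma, crucially exploits both the genericity condition on $\psi$ and the fiber-dimension hypothesis on $G \times W \to \mathscr{Y}^\circ$.
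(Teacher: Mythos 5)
Your part (1) is essentially the paper's argument: spread the relation $\pi^\ast\alpha=\partial\beta+\gamma$ out over an open subset of a model of $K$ and specialize at a $k$-rational point where the cycles involved are equidimensional (so that the specialization preserves both the proper-intersection and, via the containment lemma, the modulus conditions). Two corrections, though. First, $K$ is an arbitrary finitely generated purely transcendental extension of $k$ given in datum (2) of Theorem \ref{moving-lemma}; it need not be $k(G)$, so describing it as the colimit of $\mathcal{O}_G(U)$ over neighborhoods of $1_G$ is not justified --- one should instead write $K=k(t_1,\dots,t_n)$, reduce by induction to transcendence degree one, and spread out over opens of $\mathbb{A}^1$ as in Lemma \ref{colimit}. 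Second, the existence of a $k$-rational point in the shrunken open set requires $k$ infinite, which the paper arranges beforehand by a transfer argument using Corollary \ref{norm-map}.

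Part (2) has a genuine gap at exactly the point you defer to "analyzing the closure of the graph of $\Psi^\circ$": the cycle $\Psi^\ast(\pi^\ast\alpha\times\mathbb{A}^1)$ satisfies the modulus condition for $\mathscr{Y}_K\otimes\mathbb{A}^1$ (because $\overline{\Psi}^\ast(Y^\infty_K\times\mathbb{A}^1)=Y^\infty_K\times\mathbb{A}^1$), but in general \emph{not} for $\mathscr{Y}_K\otimes\overline{\square}^{(-1)}$. Taking the closure in $\overline{Y}_K\times(\mathbb{P}^1)^q\times\mathbb{P}^1$, the translated divisor can acquire multiplicity greater than $1$ along $\{t=\infty\}$ --- already for $\mathscr{Y}=\mathscr{X}\otimes\overline{\square}^{(m)}$ with $m\geq 2$, the pullback of $m\{s=\infty\}$ under $(s,t)\mapsto s+tv$ contributes $m\{t=\infty\}$ --- so the required inequality against a single copy of $\overline{Y}_K\times(\mathbb{P}^1)^q\times\{\infty\}$ fails, and no analysis of graph closures repairs this: the cycle itself must be modified. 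This is what the paper's "powered generic translation" does. Lemma \ref{Archimedean-property} yields the inequality only after adding $d\cdot\{t=\infty\}$ for some $d\geq d_V$ depending on the cycle; pushing forward along the degree-$d$ power map $\rho^d:t\mapsto t^d$ (with $(\overline{\rho}^d)^\ast\{\infty\}=d\{\infty\}$) absorbs that coefficient, and the homotopy is the difference $\mathcal{H}^d_q=h^{d+1}_q-h^d_q$, needed to normalize the face at $t=0$ where $\rho^d$ is ramified (Theorem \ref{construction-of-homotopy-1}). Because $d_V$ depends on $V$, the paper must also restrict to finitely generated subcomplexes, choose $d$ uniformly there, and pass to the colimit; your proposal addresses neither the correction at $t=\infty$ nor this uniformity issue. (Invoking Lemma \ref{rigidity} instead of verifying the homotopy identity directly is a harmless cosmetic difference, but it does not circumvent the missing modulus bound.)
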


To prove the assertion (1), we need the following lemma.

\begin{lem}\label{colimit}
Let $X$ be a scheme of finite type over a field $k$ and $D$ a Cartier divisor on $X$.
Let $w$ be a finite set of closed subsets of $X \setminus |D|$.
Suppose given a filtered projective system $(U_i )_{i \in I}$ of open subsets of $X$ such that the transition morphisms $\iota_i^j : U_j \hookrightarrow U_i$ are affine for all $i<j$.
Set $\widehat{U} := \cap_{i \in I} U_i$.
Moreover, let $\widehat{D}$ (resp. $D_i$) be the pullback of $D$ onto $\widehat{U}$ (resp. $U_i$).
Then, for any $r,q\geq 0$, we have canonical isomorphisms
\[
\mathrm{colim}_{i \in I} \underline{z}^r_w (U_i |D_i ,q) \xrightarrow{\cong }  \underline{z}^r_w (\widehat{U} |\widehat{D} ,q),\ \ \mathrm{colim}_{i \in I} \underline{z}^r_w (U_i |D_i ,q)' \xrightarrow{\cong }  \underline{z}^r_w (\widehat{U} |\widehat{D} ,q)',
\]
where the subscript $w$, on the left (resp. right) hand sides of the isomorphisms, denotes the set $\{W \cap U_i |W \in w \}$ (resp. $\{W \cap \widehat{U}| W \in w \}$).

\end{lem}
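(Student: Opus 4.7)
The plan is to carry out the standard Noetherian approximation/limit argument, handling the usual and na\"{i}ve versions in parallel (the only change is that one uses $\mathbb{A}^q$ and $F_q=0$ in place of $(\mathbb{P}^1)^q$ and $F_q$ throughout). First I would construct the comparison map, and then prove injectivity and surjectivity separately. For every transition $\iota_i^j : U_j \hookrightarrow U_i$, the open immersion is flat, coadmissible, and left fine (the graph already lies inside the closed diagonal of $U_i \times U_i$), so Proposition \ref{functoriality} yields restriction-of-cycles maps $\underline{z}^r_w(U_i|D_i, q) \to \underline{z}^r_w(U_j|D_j, q)$ and $\underline{z}^r_w(U_i|D_i, q) \to \underline{z}^r_w(\widehat{U}|\widehat{D}, q)$, assembling to the candidate comparison map
\[
\mathrm{colim}_i \underline{z}^r_w(U_i|D_i, q) \longrightarrow \underline{z}^r_w(\widehat{U}|\widehat{D}, q),
\]
whose bijectivity is the goal.

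Injectivity should reduce to Noetherianity: if $V = \sum n_\alpha V_\alpha \in \underline{z}^r_w(U_i|D_i, q)$ pulls back to zero on $\widehat{U}^\circ \times \square^q$, no irreducible component $V_\alpha$ meets $\widehat{U}^\circ \times \square^q$, so each generic point $\eta_\alpha$ sits in $\bigcup_{j \geq i}(U_i \setminus U_j)$. This is a filtered increasing union of closed subsets of the Noetherian space $U_i$, so each $\eta_\alpha$ lies in some single $U_i \setminus U_{j_\alpha}$, and a common upper bound $j \in I$ of the finitely many $j_\alpha$ kills $V$ at level $j$.

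For surjectivity, fix an irreducible $V \in \underline{z}^r_w(\widehat{U}|\widehat{D}, q)$ with generic point $\eta_V$ and let $V^{(i)}$ be the closure of $\eta_V$ in $U_i^\circ \times \square^q$. Then $V^{(i)} \cap (\widehat{U}^\circ \times \square^q) = V$, the closures $\overline{V^{(i)}}$ in $U_i \times (\mathbb{P}^1)^q$ form a projective system of integral Noetherian schemes with open-immersion transitions, and since normalization commutes with flat base change the normalizations $\overline{V^{(i)}}^N$ form an analogous system with $\overline{V}^{\widehat{U}, N} = \bigcap_i \overline{V^{(i)}}^N$. It then remains to show that for $i$ sufficiently large, $V^{(i)}$ lies in $\underline{z}^r_w(U_i|D_i, q)$, which amounts to (a) codimension $r$ in $U_i^\circ \times \square^q$, (b) proper intersection with $(W \cap U_i) \times F$ for every $W \in w$ and face $F \subset \square^q$, and (c) the modulus inequality $\nu^\ast(D_i \times (\mathbb{P}^1)^q) \leq \nu^\ast(U_i \times F_q)$ on $\overline{V^{(i)}}^N$.

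I expect the modulus condition (c) to be the main obstacle. It is the effectivity of a specific Cartier divisor on $\overline{V^{(i_0)}}^N$, equivalently the regularity of an explicit rational section, hence its non-effectivity locus is a closed subset of $\overline{V^{(i_0)}}^N$ and, by the hypothesis on $V$, is disjoint from the intersection $\bigcap_i \overline{V^{(i)}}^N$. Since a closed subset of the Noetherian $\overline{V^{(i_0)}}^N$ has finitely many generic points and each must therefore lie in some $\overline{V^{(i_0)}}^N \setminus \overline{V^{(i)}}^N$, a common upper bound $i \in I$ eliminates the entire closed locus at once, giving (c). The identical Noetherian-descent pattern applied to the ambient $U_{i_0}^\circ \times \square^q$ handles (a) and (b), finishing the argument; the na\"{i}ve variant requires no change beyond the substitution mentioned at the outset.
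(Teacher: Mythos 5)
Your proposal is correct and follows essentially the same route as the paper: reduce to the standard colimit statement for the interiors $U_i^\circ$, and for surjectivity spread out an integral cycle to the closure of its generic point at a finite level, observe that the locus where the modulus inequality fails is a closed subset of the normalization of the closure that is disjoint from the limit, and kill its finitely many generic points at a single finite stage by Noetherianity. The paper phrases injectivity simply as a consequence of the subgroup inclusions into $\underline{z}^r_w(U_i^\circ,q)$ together with the standard isomorphism for interiors, but your direct Noetherian argument is the same in substance.
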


\begin{proof}
We give a proof of the first isomorphism.
The proof for the na\"{i}ve version is formally obtained by replacing every $z^r (-,\ast ) , z^r_w (-,\ast ), (\mathbb{P}^1 )^q, F_q$ by $z^r (-,\ast )' , z^r_w (-,\ast )' , \mathbb{A}^q ,0$ respectively.

First, we prove injectivity.
Set $U_i^\circ := U_i \setminus |D_i |$ for each $i$, and set $\widehat{U}^\circ := \widehat{U} \setminus \widehat{D}$.
Then, we have $\widehat{U}^\circ = \cap_i U_i^\circ$.
It is satandard that $\mathrm{colim}_{i} \underline{z}^r_w (U_i^\circ ,q) \xrightarrow{\cong }  \underline{z}^r_w (\widehat{U}^\circ ,q)$.
Since $\underline{z}^r (U_i |D_i ,q) \subset \underline{z}^r (U_i^\circ ,q)$ and $\underline{z}^r (\widehat{U} |\widehat{D} ,q) \subset \underline{z}^r (\widehat{U}^\circ ,q)$ by definiton, the injectivity follows.
Next, we prove surjectivity.
Take any integral cycle $Z \in \underline{z}^r (\widehat{U}|\widehat{D},q) \subset \underline{z}^r (\widehat{U}^\circ ,q)$.
By the isomorphism $\mathrm{colim}_{i} \underline{z}^r (U_i^\circ ,q) \xrightarrow{\cong }  \underline{z}^r (\widehat{U}^\circ ,q)$, there exist an element $i_0 \in I$ and a cycle $Z_i \in \underline{z}^r (U_{i_0}^\circ ,q)$ such that the restriction of $Z_{i_0}$ coincides with $Z$.
By discarding components of $Z_{i_0}$ if necessary, we may assume that $Z_{i_0}$ is also integral.
Then, $Z_{i_0}$ equals the closure of $Z$ in the scheme $U_{i_0}^\circ \times \mathbb{A}^q$.
Consider the closed subset \[A =\overline{Z}_{i_0}^N \setminus \Set{x \in \overline{Z}_{i_0}^N | D \times (\mathbb{P}^1 )^q |_{\overline{Z}_{i_0}^N} \leq  X \times F_q |_{\overline{Z}_{i_0}^N} \text{ around } x}\] of $\overline{Z}_{i_0}^N$.
It suffices to prove that $A_i := A_{i_0} \times_{U_0 } U_i = \emptyset$ for some $i > i_0$.
Let $\overline{Z}$ be the closure of $Z$ in $\widehat{U} \times (\mathbb{P}^1 )^q$, and let $\overline{Z}^N \to \overline{Z}$ be the normalization.
Then, $\overline{Z}^N = \overline{Z}_{i_0 }^N \times_{U_{i_0}} \widehat{U}$.
By the assumption that $Z \in \underline{z}^r (\widehat{U}|\widehat{D},q)$, we have $\overline{Z}^N \supset A_{i_0 } \times_{U_{i_0}} \widehat{U} = \emptyset$.
Thus, $\overline{Z}^N$ does not contain any generic point of $A_{i_0}$, which implies that there exists $i>i_0$ such that $A_i = \emptyset$.
This finishes the proof.
 \end{proof}

We prove the assertion (1) by using the standard ``specialization argument" as in the proof of \cite[Lemma 2.3]{B}.
Since $k \subset K$ is a finitely generated purely transcendental extension, we may assume that $\mathrm{tr.deg}_k (K)=1$ by induction on the transcendental degree.
Then, $K=k(t)$ for some transcendental element $t \in K$.
By Lemma \ref{colimit}, we have $\underline{z}^r (\mathscr{Y}_K ,q)^{(\prime )} = \mathrm{colim}_{U} \underline{z}^r (\mathscr{Y} \times U,q)^{(\prime )}$ and $\underline{z}^r_w (\mathscr{Y}_K ,q)^{(\prime )} = \mathrm{colim}_{U} \underline{z}^r_w (\mathscr{Y} \times U,q)^{(\prime )}$ for any $q \geq 0$, where $U$ runs over non-empty open subsets of $\mathbb{A}^1 = \mathrm{Spec}(k[t])$.
Fix $q \geq 0$, take any element $[Z] \in \mathrm{Ker} (\overline{\pi }) \subset \underline{z}^r (\mathscr{Y} ,q )^{(\prime )} / \underline{z}^r_w (\mathscr{Y} ,q  )^{(\prime )}$, and let $Z \in \underline{z}^r (\mathscr{Y} ,q )^{(\prime )}$ be a lift of $[Z]$.
Then, \[\pi^\ast (Z) \in  \underline{z}^r_w (\mathscr{Y}_K ,q )^{(\prime )} + \mathrm{Im}[d_{q+1} : \underline{z}^r (\mathscr{Y}_K ,q+1 )^{(\prime )} \to  \underline{z}^r (\mathscr{Y}_K ,q )^{(\prime )} ].\]
We can find a non-empty open subset $U \subset \mathbb{A}^1$ such that 
$
Z \times U = Z_1 + Z_2,
$
where $Z_1 \in \underline{z}^r_w (\mathscr{Y} \times U ,q )^{(\prime )}$ and $Z_2 \in \mathrm{Im}[d_{q+1} : \underline{z}^r (\mathscr{Y} \times U ,q+1 )^{(\prime )} \to  \underline{z}^r (\mathscr{Y} \times U ,q )^{(\prime )} ]$.
By shrinking $U$ if necessary, we may assume that $Z_1$ and $Z_2$ are equidimensional over $U$ (by the semi-continuity of the dimension function).
Since the field $k$ is assumed to be infinite, there exists a $k$-rational point $x : \mathrm{Spec}(k) \to U$ of $U$.
By specializing the above equation at $x$ (which is possible by equidimensionality), we obtain $Z = Z_{1,x} + Z_{2,x}$ with $Z_{1,x} \in \underline{z}^r_w (\mathscr{Y} ,q )^{(\prime )}$ and $Z_{2,x} \in \mathrm{Im}[d_{q+1} : \underline{z}^r (\mathscr{Y} ,q+1 )^{(\prime )} \to  \underline{z}^r (\mathscr{Y} ,q )^{(\prime )} ]$.
This means that $[Z]=0$.

\ 

This finishes the proof of the assertion (1) of Proposition \ref{key-prop-1-2}.

\subsection{Construction of homotopy}

To prove the assertion (2) of Proposition \ref{key-prop-1-2}, it suffices to construct a weak homotopy between $\overline{\pi}^\ast$ and the zero map.
Consider a ``generic translation" $K$-isomorphism 
\[
\overline{\varphi} : \overline{Y}_K \times \mathbb{A}^1 \to \overline{Y}_K \times \mathbb{A}^1, \ \ (y, t) \mapsto (\gamma (\psi (t),y),t) = (\psi (t) \cdot y ,t),
\]
which satisfies $\overline{\varphi }^\ast (Y^\infty_K \times \mathbb{A}^1 ) = Y^\infty_K \times \mathbb{A}^1$.
The inverse is given by 
\[(y,t) \mapsto (\gamma (\psi (t)^{-1} ,y) ,t)=(\psi (t)^{-1} \cdot y,t).\]
Note that $\overline{\varphi }$ restricts to a $K$-isomorphism
\[
\varphi :  \mathscr{Y}^\circ_K \times \mathbb{A}^1 \to \mathscr{Y}^\circ_K \times \mathbb{A}^1, \ \ (y,t) \mapsto (\gamma^\circ (\psi (t),y) ,t)=(\psi (t)\cdot y,t).
\]
Fix $q \geq 0$, and consider the composite
\begin{align*}
\Psi : (\mathscr{Y}^\circ_K \times \square^q ) \times \mathbb{A}^1 &\cong (\mathscr{Y}^\circ_K \times \mathbb{A}^1 ) \times \square^q \\ &\xrightarrow{\varphi \times \mathrm{id}_{\square^q }} (\mathscr{Y}^\circ_K \times \mathbb{A}^1 ) \times \square^q \cong (\mathscr{Y}^\circ_K \times \square^q ) \times \mathbb{A}^1 ,
\end{align*}
where the first and the third isomorphisms are the exchanging maps. 

For a moment, fix a positive integer $d > 0$. Consider the $d$-th power  morphism
\[ \rho^d : \mathbb{A}^1 \to  \mathbb{A}^1, \ \ t \mapsto t^d . \]
Note that $\rho^d $ is a finite morphism.
Set
\[
f_q^d := \mathrm{id}_{\mathscr{Y}_K^\circ \times \square^q } \times \rho^d : \mathscr{Y}_K^\circ \times \square^q \times \mathbb{A}^1 \to  \mathscr{Y}_K^\circ \times \square^q \times \mathbb{A}^1 .
\]

For any algebraic cycle $V \subset \mathscr{Y}^\circ \times \square^q $, define another cycle $\mathcal{H}_q^d (V)$ on $\mathscr{Y}_K^\circ \times \square^{q+1} = \mathscr{Y}_K^\circ \times \square^{q} \times \mathbb{A}^1$ by
\[
\mathcal{H}_q^d (V) := (f_q^{d+1} )_\ast  \Psi^\ast (V_K \times \mathbb{A}^1 ) - (f_q^{d} )_\ast  \Psi^\ast (V_K \times \mathbb{A}^1 ),
\]
where $V_K := \pi^\ast V$.
For later use, we set $h_q^d (V) := (f_q^d )_\ast \Psi^\ast (V_K \times \mathbb{A}^1 )$.

\ 

The aim of this subsection is the following theorem.

\begin{thm}\label{construction-of-homotopy-1}
For all integers $q \geq 0$ and $d>0$, the above construction defines a linear map
\[
\mathcal{H}_q^d : \underline{z}^r (\mathscr{Y}^\circ ,q) \to \underline{z}^r (\mathscr{Y}_K^\circ ,q+1)
\]
satisfying the following properties for any $V \in \underline{z}^r (\mathscr{Y}^\circ ,q)$.
\begin{itemize}
\item[$\mathrm{(a)}$]
\[d_{q+1} \mathcal{H}_q^d (V) - \mathcal{H}_{q-1}^d d_q (V) = (-1)^q \left( i_1^\ast \mathcal{H}_q^d (V) - i_0^\ast \mathcal{H}_q^d (V) \right) ,\]
where we set $\mathcal{H}_{-1} = 0$, and 
\[i_\epsilon : \mathscr{Y}_K^\circ \times \square^q \cong \mathscr{Y}_K^\circ \times \square^q \times \{\epsilon \} \hookrightarrow \mathscr{Y}_K^\circ \times \square^{q} \times \mathbb{A}^1  = \mathscr{Y}_K^\circ \times \square^{q+1} .\]
\item[$\mathrm{(b)}$]
\[\mathcal{H}_q^d (\underline{z}^r_w (\mathscr{Y}^\circ ,q)) \subset \underline{z}^r_w (\mathscr{Y}_K^\circ ,q+1).\]
\item[$\mathrm{(c)}$]
\[i_0^\ast \mathcal{H}_q^d (V) = V_K = \pi^\ast V ,\ \ \ i_1^\ast \mathcal{H}_q^d (V) \in \underline{z}^r_w (\mathscr{Y}^\circ_K ,q).\]
\end{itemize}

Moreover, assume that $V$ belongs to the subgroup $\underline{z}^r (\mathscr{Y} ,q)^{(\prime )} \subset \underline{z}^r (\mathscr{Y}^\circ  ,q)$.
Then, there exists a positive integer $d_V$ associated to $V$ such that 
\begin{itemize}
\item[$\mathrm{(d)}$]
\[\mathcal{H}_q^d (V) \in \underline{z}^r (\mathscr{Y}_K ,q+1)^{(\prime )} \]
\end{itemize}
for any $d \geq d_V$.
\end{thm}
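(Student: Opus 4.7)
The plan is to verify properties (a)--(d) directly from the construction. First, $\mathcal{H}_q^d$ is well-defined as a map into $\underline{z}^r(\mathscr{Y}_K^\circ, q+1)$ because $\Psi$ is an isomorphism (so $\Psi^*$ preserves codimension and face-properness in the unchanged $\square^q$ coordinates) and $(f_q^d)_*$ is pushforward along a finite morphism.

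For (a), I would expand $d_{q+1}$ as the alternating sum $\sum_{i=1}^{q+1}(-1)^i(d_i^1 - d_i^0)$. For $i \leq q$, both $\Psi^*$ and $(f_q^d)_*$ commute with $d_i^\epsilon$ (they fix the first $q$ cube coordinates), so these contribute $\mathcal{H}_{q-1}^d d_q(V)$; the remaining $i = q+1$ terms are $i_0^*, i_1^*$ by definition, giving the right-hand side of (a) after sign bookkeeping. For (c), the fiber at $t = 0$ uses $\psi(0) = 1$: $\Psi|_{t=0} = \mathrm{id}$ gives $\Psi^*(V_K \times \mathbb{A}^1)|_{t=0} = V_K$, and the scheme-theoretic fiber $\mathrm{Spec}(k[t]/(t^d))$ of $\rho^d$ over $0$ contributes a factor of $d$ in the pushforward, so $i_0^* h_q^d(V) = d \cdot V_K$ and hence $i_0^* \mathcal{H}_q^d(V) = (d+1) V_K - d V_K = V_K$. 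The fiber at $t = 1$ is a sum of translates $\psi(\zeta) \cdot V_K$ over $\zeta$ with $\zeta^d = 1$; by hypothesis each such $\psi(\zeta)$ lies over the generic point of $G$, and combined with the equidimensional transitivity $G \cdot W = \mathscr{Y}^\circ$, each translate meets $W_K \times F$ in the expected codimension, giving $i_1^* \mathcal{H}_q^d(V) \in \underline{z}_w^r(\mathscr{Y}_K^\circ, q)$. Property (b) is checked face-by-face: faces with the last coordinate fixed at $0$ or $1$ reduce to (c) (the $t = 0$ case uses $V \in \underline{z}_w^r$), while faces of the form $F' \times \mathbb{A}^1$ follow from a fiberwise-over-$t$ dimension count using the same generic-translation principle.

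The main obstacle is (d). The idea is that the modulus defect of $h_q^d(V)$ is concentrated over $t = \infty$ and bounded independently of $d$, while the pullback identity $(\rho^d)^*\{\infty\} = d \cdot \{\infty\}$ gives an extra factor of $d$ on the right-hand side; hence the inequality holds once $d$ is large enough. Concretely, let $\overline{Z} \subset \overline{Y}_K \times (\mathbb{P}^1)^{q+1}$ be the closure of $\Psi^{-1}(V_K \times \mathbb{A}^1)$ with normalization $\nu_Z : \overline{Z}^N \to \overline{Y}_K \times (\mathbb{P}^1)^{q+1}$, and set $E := \nu_Z^*(\overline{Y} \times (\mathbb{P}^1)^q \times \{\infty\})$. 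On the open locus $t \neq \infty$, $\Psi$ is defined and by hypothesis (1) pulls $Y^\infty \times (\mathbb{P}^1)^q \times \mathbb{A}^1$ back to itself, so combined with the modulus condition on $V$ one obtains $\nu_Z^*(Y^\infty) \leq \nu_Z^*(\overline{Y} \times F_q \times \mathbb{P}^1)$ on $\overline{Z}^N \setminus E$. The difference is therefore a Cartier divisor on $\overline{Z}^N$ supported on $E$, hence bounded above by $D \cdot E$ for some integer $D = D_V$. Setting $d_V := D$ and using the identity $(f_q^d)^*(\overline{Y} \times F_{q+1}) = \overline{Y} \times F_q \times \mathbb{P}^1 + d \cdot \overline{Y} \times (\mathbb{P}^1)^q \times \{\infty\}$, one obtains the modulus inequality on $\overline{Z}^N$ for all $d \geq d_V$; Lemma~\ref{effectivity-criterion} applied to the finite surjection $\overline{Z}^N \to \overline{W}^N$ (with $\overline{W}$ the closure of $h_q^d(V)$) then transfers the inequality to $\overline{W}^N$ itself, proving $h_q^d(V) \in \underline{z}^r(\mathscr{Y}_K, q+1)$ and hence $\mathcal{H}_q^d(V) \in \underline{z}^r(\mathscr{Y}_K, q+1)$. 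For the na\"{i}ve variant the argument is considerably simpler: $\Psi$ extends globally to $\overline{Y}_K \times \mathbb{A}^{q+1}$ and preserves the support $|Y^\infty| \times \mathbb{A}^{q+1}$, so the na\"{i}ve modulus condition holds for $h_q^d(V)$ with no restriction on $d$.
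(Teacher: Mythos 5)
Your proposal is correct and follows essentially the same route as the paper: the same commutation argument for (a), the same projection-formula computation $i_0^*h_q^d(V)=d\cdot V_K$ for (c), the same reduction of the fibers over the $d$-th roots of unity to the genericity of $\psi(\tau)$ and the generic-translation moving lemma for (b) and the second half of (c), and for (d) the same strategy of bounding the modulus defect of $\Psi^*(V_K\times\mathbb{A}^1)$ near $t=\infty$ (the paper's Lemma \ref{Archimedean-property}) and absorbing it via $(\rho^d)^*\{\infty\}=d\cdot\{\infty\}$ together with Lemma \ref{effectivity-criterion} along the finite surjection onto the normalized closure of $h_q^d(V)$. The only quibble is the phrase ``the difference is a Cartier divisor supported on $E$'' --- more precisely, its non-effective part is supported on $|E|$, and one needs the quasi-compactness argument to bound it by a multiple of $E$ --- but this is exactly what the paper's lemma supplies.
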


We prepare some elementary lemmas.

\begin{lem}\label{elementary-moving} 
Let $k$ be a field, $X$ an equidimensional scheme of finite type over $k$, and $G$ a connected algebraic $k$-group.
Suppose that we are given an action $\gamma : G \times X \to X$.
Let $Z,W$ be two closed subsets of $X$ and let $\gamma |_{G \times Z}$ be the composition $\gamma |_{G \times Z} : G \times Z \hookrightarrow G \times X \xrightarrow{\gamma } X$, where the first map is the natural inclusion.
Assume that the map $\gamma |_{G \times Z}$ is dominant and all the fibers $(\gamma |_{G \times Z} )^{-1} (x) \ (x \in X)$  have the same dimension.
Then, there exists a non-empty open subset $U \subset G$ such that for any point $g \in U$, the closed subsets $g \cdot Z := \gamma (g,Z)$ and $W$ meet properly in $X$.
\end{lem}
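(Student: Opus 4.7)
The plan is to adapt Kleiman's generic transversality argument to this equivariant setting. First I would form the pullback
\[
\widetilde W := (\gamma|_{G \times Z})^{-1}(W) \subset G \times Z
\]
and consider the first projection $p_G : \widetilde W \to G$. The fiber of $p_G$ over a point $g \in G$ is $\{z \in Z : g \cdot z \in W\} = Z \cap g^{-1}W$, which is carried isomorphically onto $g \cdot Z \cap W$ by the automorphism $\gamma(g,\cdot)$ of $X$. So it suffices to produce a non-empty open $U \subset G$ on which every irreducible component of the fiber $p_G^{-1}(g)$ has dimension at most $\dim Z + \dim W - \dim X$, the expected dimension of a proper intersection.

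The key estimate is on the dimension of $\widetilde W$. Writing $e$ for the common fiber dimension of $\gamma|_{G \times Z}$, the dominance of this map combined with the equidimensionality of $X$ forces $e \le \dim G + \dim Z - \dim X$: the generic fiber dimension of $\gamma|_{G \times Z_j}$ over each irreducible component of $X$ it dominates equals $\dim G + \dim Z_j - \dim X$ for a suitable component $Z_j$ of $Z$, and in our constant-fiber situation this value coincides with $e$. Since the fibers of $\gamma$ restricted to any irreducible component $\widetilde W_\alpha$ of $\widetilde W$ are contained in fibers of $\gamma|_{G \times Z}$, hence of dimension $\le e$, we obtain
\[
\dim \widetilde W_\alpha \le \dim \overline{\gamma(\widetilde W_\alpha)} + e \le \dim W + e \le \dim W + \dim G + \dim Z - \dim X.
\]

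I would then apply the upper semi-continuity of fiber dimension to each restriction $p_G|_{\widetilde W_\alpha}$. If this map is dominant, there is a non-empty open $U_\alpha \subset G$ on which the fiber has dimension exactly $\dim \widetilde W_\alpha - \dim G \le \dim W + \dim Z - \dim X$; if not, take $U_\alpha := G \setminus \overline{p_G(\widetilde W_\alpha)}$, on which the fiber is empty. Since $G$ is irreducible (being a connected algebraic group) and the components of $\widetilde W$ are finite in number, $U := \bigcap_\alpha U_\alpha$ is a non-empty open subset of $G$. For any $g \in U$ the fiber $Z \cap g^{-1} W$ of $p_G$ has every component of dimension at most $\dim Z + \dim W - \dim X$, and translating by $g$ shows that $g \cdot Z$ and $W$ meet properly in $X$.

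The main technical point I expect to be the obstacle is the derivation of the inequality $e \le \dim G + \dim Z - \dim X$ when $Z$ is not assumed irreducible: this requires tracking which components of $G \times Z$ dominate which components of $X$ and using the equidimensionality of $X$ in an essential way. Once that is in hand, the rest is routine dimension bookkeeping together with upper semi-continuity of fiber dimension.
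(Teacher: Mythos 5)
Your argument is correct, and it is essentially the proof the paper defers to: the paper simply cites Bloch's Lemma 1.1, whose proof is exactly this Kleiman-style argument (pull back $W$ under $\gamma|_{G\times Z}$, bound $\dim \widetilde W$ using the constant fiber dimension $e \le \dim G + \dim Z - \dim X$, then project to $G$ and use semicontinuity of fiber dimension). Your handling of the reducible case of $Z$ and the use of equidimensionality of $X$ to pin down $e$ are the right technical points, so nothing is missing.
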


\begin{proof}
See \cite[Lemma 1.1]{B}.
 \end{proof}

\begin{lem}\label{Archimedean-property}
Let $X$ be a quasi-compact scheme and let $D, E$ be Cartier divisors on $X$ with $E \geq 0$.
Assume that the restriction of $D$ to the open subset $X \setminus E \subset X$ is effective.
Then, there exists a natural number $n_0 \geq 1$ such that $D + n \cdot E$ is effective for any $n \geq n_0$.
\end{lem}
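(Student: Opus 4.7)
The plan is to reduce to the affine case via quasi-compactness and then exploit that $E$ is locally given by a non-zero-divisor on which we can take a bounded power to clear denominators.

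First, I would cover $X$ by finitely many affine open subsets $U_1, \dots, U_N$ (using quasi-compactness) on which both $D$ and $E$ are principal, say $D|_{U_i} = \mathrm{div}(f_i)$ and $E|_{U_i} = \mathrm{div}(g_i)$, where $f_i$ is in the total ring of fractions of $\mathscr{O}_X(U_i)$ and, since $E$ is effective, $g_i \in \mathscr{O}_X(U_i)$ is a non-zero-divisor.

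Next, on each $U_i$, the hypothesis that $D|_{X \setminus E}$ is effective means that $f_i$ is a regular section on $U_i \setminus E = U_i \setminus V(g_i)$, i.e.\ $f_i \in \mathscr{O}_X(U_i)[g_i^{-1}]$. Hence we can write $f_i = h_i / g_i^{m_i}$ for some $h_i \in \mathscr{O}_X(U_i)$ and some integer $m_i \geq 0$. Then for every $n \geq m_i$ the element $f_i \cdot g_i^n = h_i \cdot g_i^{n - m_i}$ lies in $\mathscr{O}_X(U_i)$, so that $(D + n \cdot E)|_{U_i} = \mathrm{div}(f_i g_i^n)$ is represented by an honest regular function, i.e.\ is effective on $U_i$.

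Finally, setting $n_0 := \max_{1 \leq i \leq N} m_i$, which exists because the cover is finite, we obtain that $D + n \cdot E$ is effective on each $U_i$ for every $n \geq n_0$; since effectivity of a Cartier divisor is a local property on $X$, it follows that $D + n \cdot E$ is effective on $X$ for all such $n$. There is no real obstacle here beyond bookkeeping; the only subtle point is to work with the total ring of fractions rather than a function field, since $X$ is not assumed integral, but this does not affect the argument because $g_i$ is a non-zero-divisor on $\mathscr{O}_X(U_i)$ by virtue of $E$ being an effective Cartier divisor.
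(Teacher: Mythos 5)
Your proof is correct and follows essentially the same route as the paper: reduce by quasi-compactness to finitely many affine pieces where $D$ and $E$ are principal, observe that effectivity of $D$ away from $E$ means the local equation of $D$ lies in the localization at the equation of $E$, clear denominators by a bounded power, and take the maximum exponent over the finite cover. The extra care you take with the total ring of fractions is a reasonable refinement of the same argument.
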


\begin{proof}
It suffices to prove that there exists at least one number $n \geq 1$ such that $D + n \cdot E \geq 0$.
By quasi-compactness we may assume that $X=\mathrm{Spec}(A)$ is affine and that the Cartier divisors $D$ and $E$ are given by rational functions $\frac{f}{g}$ and $h$ respectively, where $f,g,h \in A$.
The assumption that $D |_{X \setminus E} \geq 0$ means that $\frac{f}{g} \in A_h$, where $A_h$ denotes the localization of $A$ obtained by inverting $h$.
Note that the canonical map $A \to A_h$ is injective because $h$ is a non-zero divisor.
Since every element of $A_h$ is of the form $\frac{a}{h^m}$, where $a \in A$ and $m \geq 1$, we can find a natural number $n \geq 1$ such that $\left( \frac{f}{g}\right) h^n$ is in (the image of) $A$, which implies that $D + n \cdot E$ is effective. This finishes the proof.
 \end{proof}

The rest of this subsection is devoted to the proof of Theorem \ref{construction-of-homotopy-1}.
The proof of \cite[Lemma 2.2]{B} shows that, for any finite set of locally closed subsets $w$ (possibly empty), the linear map
\[
h_q : \underline{z}^r_w (\mathscr{Y}^\circ ,q) \to \underline{z}^r_w (\mathscr{Y}_K^\circ ,q+1), \ \ V \mapsto \Psi^\ast (V_K \times \mathbb{A}^1 )
\]
is well-defined.
Taking $w=\emptyset$, we obtain a map $\underline{z}^r (\mathscr{Y}^\circ ,q) \to \underline{z}^r (\mathscr{Y}_K^\circ ,q+1)$.
It is also shown in {\it loc. cit.} that $i_1^\ast h_q (\underline{z}^r (\mathscr{Y}^\circ ,q) ) \subset \underline{z}^r_w (\mathscr{Y}^\circ_K ,q)$.

We prove that the map
\[
h_q^d : \underline{z}^r_w (\mathscr{Y}^\circ ,q) \to \underline{z}^r_w (\mathscr{Y}_K^\circ ,q+1), \ \ V \mapsto (f_q^d )_\ast \Psi^\ast (V_K \times \mathbb{A}^1 )
\]
is also well-defined and $i_1^\ast h_q^d (\underline{z}^r (\mathscr{Y}^\circ ,q) ) \subset \underline{z}^r_w (\mathscr{Y}^\circ_K ,q)$.
We need the following lemma.

\begin{lem}\label{lem-1}
For a closed point $\tau \in \mathbb{A}^1$, define
\[i_\tau : \mathscr{Y}_K^\circ \times \square^q \cong \mathscr{Y}_K^\circ \times \square^q \times \{\tau \} \hookrightarrow \mathscr{Y}_K^\circ \times \square^{q} \times \mathbb{A}^1  = \mathscr{Y}_K^\circ \times \square^{q+1} .\]
Assume that  $\tau \neq 0$. Then, we have $i_\tau^\ast h_q (\underline{z}^r (\mathscr{Y}^\circ ,q) ) \subset \underline{z}^r_w (\mathscr{Y}^\circ ,q)$. 
\end{lem}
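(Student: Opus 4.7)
The plan is to identify $i_\tau^\ast h_q(V)$ with a translate of $V_K$ by the element $\psi(\tau)$, and then show this translate meets each $W_K \times F$ properly by appealing to the classical generic-translation moving lemma (Lemma \ref{elementary-moving}) together with hypothesis (2), which is precisely what guarantees that $\psi(\tau)$ is sufficiently generic.

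First I would unpack the construction: since $\Psi$ sends $(y, \bvec{x}, t)$ to $(\psi(t) \cdot y, \bvec{x}, t)$, the composite $\Psi \circ i_\tau$ is the map $(y, \bvec{x}) \mapsto (\psi(\tau) \cdot y, \bvec{x})$, and hence $i_\tau^\ast h_q(V) = T_{\psi(\tau)}^\ast (V_K)$, where $T_{\psi(\tau)}$ denotes translation by $\psi(\tau)$ on the first factor. Thus the lemma reduces to showing that, for every $W \in w$ and every face $F \subset \square^q$, the translated cycle $T_{\psi(\tau)}^\ast (V_K)$ meets $W_K \times F$ properly in $\mathscr{Y}^\circ_K \times \square^q$.

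Next I would fix a pair $(W, F)$ and apply Lemma \ref{elementary-moving} to the $G$-action on $\mathscr{Y}^\circ \times \square^q$ obtained by acting on the first factor. Hypothesis (3) implies that the restricted action map $G \times (W \times F) \to \mathscr{Y}^\circ \times F$ is surjective with equidimensional fibers, so (after a routine reformulation of the lemma that handles the proper intersection inside the ambient $\mathscr{Y}^\circ \times \square^q$ by treating the $\square^q$-factor separately) there is a dense open $U_{W, F} \subset G$ such that for every $g \in U_{W, F}$, the translate $g \cdot (W \times F)$ meets each component of $V$ properly. Since $w$ and the faces of $\square^q$ are finite, $U := \bigcap_{W, F} U_{W, F}$ is still a dense open of the irreducible group $G$, hence contains the generic point $\eta_G$ of $G$.

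Finally, hypothesis (2) is exactly the statement that $\psi(\tau)$, viewed as a point of $G_K$ and then composed with the projection $G_K \to G$, lands on the generic point $\eta_G$. Consequently this point factors through the preimage of $U$ under $G_K \to G$; by flat base change the proper-intersection condition established at $\eta_G$ is preserved under pullback along $G_K \to G$ and subsequent specialization to $\mathrm{Spec}(K(\tau)) \to G_K$. This yields the desired inclusion. The main obstacle is the genericity transfer in this last step: one must verify that the ``bad locus'' where the proper-intersection condition fails is a proper closed subset of $G$ so that its complement $U$ actually contains $\eta_G$, and then use flatness of $G_K \to G$ to pull this open condition back to the actual point defined by $\psi(\tau)$. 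Everything else follows the template of Bloch's original argument for $\mathbb{A}^1$-homotopy invariance.
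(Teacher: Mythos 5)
Your proposal is correct and follows essentially the same route as the paper: identify the fiber of $h_q(V)$ at $\tau$ with a translate of $V_K$ by $\psi(\tau)$ (a point lying over the generic point of $G$ by hypothesis (2)), and then invoke Lemma \ref{elementary-moving} together with hypothesis (3). The ``routine reformulation'' you mention is exactly the dimension count the paper carries out, using the face condition on $V$ to reduce proper intersection with $W \times F$ in $\mathscr{Y}^\circ \times \square^q$ to proper intersection inside $\mathscr{Y}^\circ \times F$.
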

\begin{proof}
First, note that 
\[
\Psi^\ast (Z_K \times \mathbb{A}^1 )|_\tau
=
\psi (K \otimes_k \tau ) \cdot Z_K  \times \{\tau \}.
\]
Here, recall from the assumption (2) in Theorem \ref{moving-lemma} that $\psi (K \otimes_k \tau )$ is a $K'=K \otimes_k \overline{k}$-point of $G_K$, whose image lies over the generic point $\eta $ of $G$.
Applying $\psi (K \otimes_k \tau )^{-1}$, we are reduced to showing that 
$Z_K \times \{\tau \}$ meets
\[
\psi (K \otimes_k \tau )^{-1} \cdot W_K \times F \times \{\tau \}
\subset
(\eta^{-1} \cdot W \times F \times \{\tau \})_K
\]
properly in $\mathscr{Y}^\circ_K \times \mathbb{A}^q \times \{\tau \}$.
Since the field extensions do not change the dimensions of closed subsets,
it suffices to prove that 
$\eta^{-1} \cdot W \times F$ and $Z$ meet properly in $\mathscr{Y}^\circ \times \mathbb{A}^q$.
In other words, we want to prove 
\begin{equation*}
\mathrm{codim}_{\mathscr{Y}^\circ \times \mathbb{A}^q} (Z \cap \eta^{-1} \cdot W \times F) \geq \mathrm{codim}_{\mathscr{Y}^\circ \times \mathbb{A}^q} Z + \mathrm{codim}_{\mathscr{Y}^\circ \times \mathbb{A}^q} (\eta^{-1} \cdot W \times F), 
\end{equation*}
which can be rewritten as 
\begin{equation*}
\dim Z + \dim(\eta^{-1} \cdot W \times F) - \dim(\mathscr{Y}^\circ \times \mathbb{A}^q) \geq \dim(Z \cap \eta^{-1} \cdot W \times F).
\end{equation*}

Since $Z$ and $\mathscr{Y}^\circ \times F$ meet properly in $\mathscr{Y}^\circ \times \mathbb{A}^q$ by assumption, we have the following inequality
\[
\mathrm{codim}_{\mathscr{Y}^\circ \times \mathbb{A}^q } (Z \cap \mathscr{Y}^\circ \times F)
\geq
\mathrm{codim}_{\mathscr{Y}^\circ \times \mathbb{A}^q} Z + \mathrm{codim}_{\mathscr{Y}^\circ \times \mathbb{A}^q} (\mathscr{Y}^\circ \times F),
\]
which can be written as 
\[
\dim Z + \dim(\mathscr{Y}^\circ \times F) - \dim(\mathscr{Y}^\circ \times \mathbb{A}^q) \geq \dim(Z \cap \mathscr{Y}^\circ \times F).
\]
So, to prove the desired inequality, it suffices to show 
\[
\dim (Z \cap \mathscr{Y}^\circ \times F) + \dim(\eta^{-1} \cdot W \times F) - \dim(\mathscr{Y}^\circ \times F) \geq \dim(Z \cap \eta^{-1} \cdot W \times F),
\]
which is equivalent to that $\eta^{-1} \cdot W \times F$
and
$Z \cap \mathscr{Y}^\circ \times F$
meet properly in $\mathscr{Y}^\circ \times F$.
The last assertion follows from Lemma \ref{elementary-moving}, since $\eta^{-1} \in G$ is generic.
 \end{proof}

Consider the closed subset $F \subset \square^q \times \mathbb{A}^1 $ of the form
\[ F=F' \times F'',\]
where $F' \subset \square^q$ is a face 
and $F'' \subset \mathbb{A}^1 $ is an irreducible closed subset.
Then, for any $W \in \{\mathscr{Y}^\circ\} \cup w$, 
\begin{align*}
| h_q^d (V) | \cap |W_K \times F| 
&=^\dag f_q^d (|\Psi^\ast (V_K \times \mathbb{A}^1 )|)   \cap |W_K \times F| \\
&=^{\dag \dag} f_q^d (|\Psi^\ast (V_K \times \mathbb{A}^1 )| \cap (f_q^d )^{-1} (|W_K \times F|) ) \\
&=^{\dag \dag \dag } f_q^d (|\Psi^\ast (V_K \times \mathbb{A}^1 )| \cap |W_K \times F' \times (\rho^d )^{-1} (F'')| ),
\end{align*}
where $=^\dag$ holds by definition of $h^{(d)}_q$. $=^{\dag \dag}$ is just the set-theoretical projection formula. Finally, $=^{\dag \dag \dag}$ is trivial. 
Since $f_q^d$ is finite, the intersection $| \underline{h}^{(d)}_q (Z) | \cap |W_K \times F|$ has the right dimension if and only if $|\Psi^\ast (Z_K \times \mathbb{A}^1 )| \cap |W_K \times F' \times (\rho^d )^{-1} (F'')|$ does.
If $F''=\{0\}$ or $\mathbb{A}^1$, then we have $|F' \times (\rho^d )^{-1} (F'')| = |F|$.
If $F''=\{\tau \}$ for a closed point $\tau \in \mathbb{A}^1 - \{0\}$,
then $|(\rho^d )^{-1} (F'')|=|(\rho^d )^{-1} (\{\tau \})|$ consists of finitely many closed points of $\mathbb{A}^1 - \{0\}$.
Thus, Lemma \ref{lem-1} shows that the map
\[
h_q^d : \underline{z}^r_w (\mathscr{Y}^\circ ,q) \to \underline{z}^r_w (\mathscr{Y}_K^\circ ,q+1), \ \ V \mapsto (f_q^d )_\ast \Psi^\ast (V_K \times \mathbb{A}^1 )
\]
is well-defined and $i_1^\ast h_q^d (\underline{z}^r (\mathscr{Y}^\circ ,q) ) \subset \underline{z}^r_w (\mathscr{Y}^\circ_K ,q)$.
In particular, we obtain a well-defined map $\mathcal{H}_q^d := h_q^{d+1} - h_q^{d}$ satisfying (b).

\ 

To prove the assertion (a), note that for each $i=1,\dots ,q-1$ and $\epsilon = 0,1$, we have $d_i^\epsilon (f_{q}^d )_\ast = (f_{q-1}^d )_\ast d_i^\epsilon$ and $d_i^\epsilon \Psi^\ast = \Psi^\ast d_i^\epsilon$, where the first equality follows from the cartesian diagram
\[\begin{xymatrix}{
\square^{q-1} \times \mathbb{A}^1 \ar[r]^{\delta_i^\epsilon \times \mathbb{A}^1 } \ar[d]_{f_{q-1}^d } \ar@{}[rd]|\square & \square^q \times \mathbb{A}^1 \ar[d]^{f_q^d } \\
\square^{q-1} \times \mathbb{A}^1 \ar[r]^{\delta_i^\epsilon \times \mathbb{A}^1 } & \square^q \times \mathbb{A}^1
}\end{xymatrix}\]
and the projection formula. 
The second equality is obvious.
This proves the assertion (a).

\ 

The assertion (c) is a consequence of the projection formula
\[
(f_q^d )_\ast (\Psi^\ast (Z_K \times \mathbb{A}^1 )) \cdot \mathscr{Y}_K^\circ \times \square^q \times \{0\} = (f_q^d )_\ast (\Psi^\ast (Z_K \times \mathbb{A}^1 ) \cdot (f_q^d )^\ast (\mathscr{Y}_K^\circ \times \square^q \times \{0\})),
\]
where $\cdot$ denotes the intersection product, combined with the equations
\begin{align*}
(f_q^d )^\ast (\mathscr{Y}_K^\circ \times \square^q \times \{0\}) = d \cdot \mathscr{Y}^\circ_K \times \square^q \times \{0\}, \ \ 
f_q^d |_{\mathscr{Y}_K^\circ \times \square^q \times \{0\}} = \mathrm{id}_{\mathscr{Y}_K^\circ \times \square^q \times \{0\}}.
\end{align*}

\ 

Finally, we prove the assertion (d).
Assume that $V \in \underline{z}^r (\mathscr{Y},q)$.
We must show that $h_q^d (V)$ belongs to $\underline{z}^r (\mathscr{Y}_K ,q+1)$ for $d \gg 0$.
Since $\underline{z}^r (\mathscr{Y} ,q)$ is a subgroup of $\underline{z}^r (\mathscr{Y}^\circ ,q)$, we have $h_q^d (V) \in \underline{z}^r (\mathscr{Y}^\circ_K ,q+1)$ for any $d>0$.
Therefore, it suffices to check the modulus condition.

We may assume that $V$ is an integral algebraic cycle.
Let $\overline{V}_K \subset \overline{Y}_K \times (\mathbb{P}^1 )^q$ be the closure of $V_K$, and let $\overline{V}_K^N \to \overline{V}_K$ be its normalization.
Since $V \in \underline{z}^r (\mathscr{Y},q)$, we have $V_K \in \underline{z}^r (\mathscr{Y}_K ,q)$, hence
\[
Y^\infty_K \times (\mathbb{P}^1 )^q  |_{\overline{V}_K^N }
\leq \overline{Y}_K \times F_q   |_{\overline{V}_K^N } ,
\]
where $(-)|_{\overline{V}^N }$ denotes the restriction of Cartier divisors to $\overline{V}_K^N$.
Let $\overline{V_K \times \mathbb{A}^1}$ denote the closure of $V_K \times \mathbb{A}^1$ in $\overline{Y}_K \times (\mathbb{P}^1 )^q \times \mathbb{A}^1$.
Then, we have $\overline{V_K \times \mathbb{A}^1} = \overline{V}_K \times \mathbb{A}^1$.
So, the above inequality implies
\[
Y^\infty_K \times (\mathbb{P}^1 )^q \times \mathbb{A}^1 |_{\overline{V_K \times \mathbb{A}^1 }^N }
\leq \overline{Y}_K \times F_q \times \mathbb{A}^1  |_{\overline{V_K \times \mathbb{A}^1 }^N } .
\]

Note that $\Psi$ extends to an isomorphism
\[\overline{\Psi} : \overline{Y}_K \times (\mathbb{P}^1 )^q \times \mathbb{A}^1 \to \overline{Y}_K \times (\mathbb{P}^1 )^q \times \mathbb{A}^1  \]
such that $\overline{\Psi}^\ast (Y_K^\infty \times \mathbb{A}^1) = Y_K^\infty \times \mathbb{A}^1$.
Let $\overline{h_q (V)}$ be the closure of $h_q (V) = \Psi^\ast (V_K \times \mathbb{A}^1 )$ in $\overline{Y}_K \times (\mathbb{P}^1 )^q \times \mathbb{A}^1$, and let $\overline{h_q (V)}^N$ be its normalization. 
Then, we have $\overline{h_q (V)} = \overline{\Psi }^\ast (\overline{V_K \times \mathbb{A}^1 })$.
Therefore, we have
\[
Y^\infty_K \times (\mathbb{P}^1 )^q \times \mathbb{A}^1 |_{\overline{h_q (V) }^N }
\leq \overline{Y}_K \times F_q \times \mathbb{A}^1  |_{\overline{h_q (V) }^N }.
\]
Moreover, let $\overline{\overline{h_q (V) }}$ be the closure of $h_q (V)$ in $\overline{Y}_K \times (\mathbb{P}^1 )^q \times \mathbb{P}^1$, and let $\overline{\overline{h_q (V) }}^N$ be its normalization.
Since
\[\overline{Y}_K \times (\mathbb{P}^1 )^q \times \mathbb{A}^1 = \overline{Y}_K \times (\mathbb{P}^1 )^q \times \mathbb{P}^1 - \overline{Y}_K \times (\mathbb{P}^1 )^q \times \{\infty \},  \]
Lemma \ref{Archimedean-property} implies that there exists a positive integer $d_V$ such that 
\[
Y^\infty_K \times (\mathbb{P}^1 )^q \times \mathbb{P}^1 |_{\overline{\overline{h_q (V) }}^N }
\leq \overline{Y}_K \times F_q \times \mathbb{P}^1  |_{\overline{\overline{h_q (V) }}^N } + d \cdot \overline{Y}_K \times (\mathbb{P}^1 )^q \times \{\infty \}  |_{\overline{\overline{h_q (V) }}^N }
\]
for any $d \geq d_V$.

Note that the $d$-th power map $\rho^d : \mathbb{A}^1 \to \mathbb{A}^1 , t \mapsto t^d$ naturally extends to a finite morphism
\[
\overline{\rho }^d : \mathbb{P}^1 \to \mathbb{P}^1 , \ \ [T_0 : T_1] \mapsto [T_0^d : T_1^d ], \hspace{5.0mm} t = \frac{T_1 }{T_0 } .
\]
Let $\overline{h_q^d (V)}$ be the closure of $h_q^d (V)$ in $\overline{Y}_K \times (\mathbb{P}^1 )^q \times \mathbb{P}^1$, and let $\overline{h_q^d (V)}^N$ be its normalization. 
Then, we have a commutative diagram of finite surjective morphisms
\[\begin{xymatrix}{
\overline{\overline{h_q (V) }}^N \ar@{>>}[r] \ar@{>>}[d] & \overline{h_q^d (V)}^N \ar@{>>}[d] \\
\overline{\overline{h_q (V) }} \ar@{>>}[r]^{\mathrm{id} \times \overline{\rho }^d }  & \overline{h_q^d (V)} ,
}\end{xymatrix}\]
where the vertical maps are the normalization morphisms, and the bottom horizontal arrow is the restriction of  $\mathrm{id} \times \overline{\rho }^d $, where $\mathrm{id}$ is the identity map on $\overline{Y} \times (\mathbb{P}^1 )^q$.
The top horizontal arrow is induced by the universal property of the normalization.
Since $(\overline{\rho}^d )^\ast \{\infty \} = d \cdot \{\infty \}$, Lemma \ref{effectivity-criterion} shows that the last inequality implies 
\[
Y^\infty_K \times (\mathbb{P}^1 )^q \times \mathbb{P}^1 |_{\overline{h_q^d (V) }^N }
\leq \overline{Y}_K \times F_q \times \mathbb{P}^1  |_{\overline{h_q^d (V) }^N } +  \overline{Y}_K \times (\mathbb{P}^1 )^q \times \{\infty \}  |_{\overline{h_q^d (V) }^N } .
\]
This finishes the proof of Theorem \ref{construction-of-homotopy-1} for $\underline{z}^r (\mathscr{Y},q)$.
The proof for the na\"{i}ve version $\underline{z}^r (\mathscr{Y},q)'$ is obtained by replacing every $F_q$ in the above proof with the trivial divisor.

\subsection{End of proof of the moving lemma}

We are now ready to prove the assertion (2) of Proposition \ref{key-prop-1-2}.

\begin{prop}
Let $C_\ast \subset \underline{z}^r (\mathscr{Y},\ast )^{(\prime )} $ be a finitely generated subcomplex, and set $C_{w,\ast } := C_\ast \cap \underline{z}^r_w (\mathscr{Y},\ast )^{(\prime )} $.
Then, the composite map
\[
\overline{\pi}^\ast_C : C_\ast / C_{w,\ast } \hookrightarrow \underline{z}^r (\mathscr{Y},\ast )^{(\prime )} / \underline{z}^r_w (\mathscr{Y},\ast )^{(\prime )}  \xrightarrow{\overline{\pi }} \underline{z}^r (\mathscr{Y}_K ,\ast )^{(\prime )} / \underline{z}^r_w (\mathscr{Y}_K ,\ast )^{(\prime )}
\]
is homotopic to zero.
In particular, by taking the inductive limit over $C_\ast$, we can see that the assertion (2) of Proposition \ref{key-prop-1-2} holds.
\end{prop}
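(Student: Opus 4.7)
The plan is to assemble the null-homotopy on $C_\ast / C_{w,\ast}$ directly out of the powered translation maps $\mathcal{H}_q^d$ constructed in Theorem \ref{construction-of-homotopy-1}, using finite generation of $C_\ast$ to upgrade property (d) from a per-cycle statement to a uniform one. Properties (a)--(c) of that theorem were already tailored to give exactly a cubical chain homotopy between $\pi^\ast$ and the zero map modulo $\underline{z}^r_w$; the only obstruction is the modulus condition, which depends on $d$ via property (d).

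The first step is to choose $d$ uniformly. Since $C_\ast$ is finitely generated, only finitely many integral cycles $V_1, \ldots, V_N$ generate it; for each, property (d) furnishes an integer $d_{V_i}$ past which $\mathcal{H}_q^d(V_i)$ satisfies the modulus condition. I would set $d := \max_i d_{V_i}$. Then $\mathcal{H}_q^d$ restricts to a homomorphism $C_q \to \underline{z}^r(\mathscr{Y}_K, q+1)^{(\prime)}$ in each degree. Property (b) sends $C_{w,q} \subset \underline{z}^r_w(\mathscr{Y}^\circ, q)$ into $\underline{z}^r_w(\mathscr{Y}_K^\circ, q+1)$, and by Definition \ref{nice-subgroup} we have $\underline{z}^r(\mathscr{Y}_K, q+1)^{(\prime)} \cap \underline{z}^r_w(\mathscr{Y}_K^\circ, q+1) = \underline{z}^r_w(\mathscr{Y}_K, q+1)^{(\prime)}$, so the construction descends to
\[ \overline{\mathcal{H}}_q^d : C_q / C_{w,q} \to \underline{z}^r(\mathscr{Y}_K, q+1)^{(\prime)} / \underline{z}^r_w(\mathscr{Y}_K, q+1)^{(\prime)}. \]

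The second step is to extract the null-homotopy from property (a). By property (c), $i_0^\ast \mathcal{H}_q^d(V) = \pi^\ast V$ and $i_1^\ast \mathcal{H}_q^d(V) \in \underline{z}^r_w(\mathscr{Y}_K^\circ, q)$; since the cubical face map $i_1^\ast$ preserves $\underline{z}^r(\mathscr{Y}_K, \ast)^{(\prime)}$ by Proposition \ref{chow-cubical}, the image in fact lies in $\underline{z}^r_w(\mathscr{Y}_K, q)^{(\prime)}$ and vanishes in the quotient. Reducing the identity in (a) modulo the subcomplex then yields $d_{q+1} \overline{\mathcal{H}}_q^d(V) - \overline{\mathcal{H}}_{q-1}^d d_q(V) \equiv (-1)^{q+1} \overline{\pi}^\ast_C(V)$. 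Setting $H_q := (-1)^{q+1} \overline{\mathcal{H}}_q^d$ absorbs the sign and gives the standard chain-homotopy identity $d_{q+1} H_q + H_{q-1} d_q = \overline{\pi}^\ast_C$, witnessing null-homotopy and therefore vanishing on homology. For the final clause, $\underline{z}^r(\mathscr{Y},\ast)^{(\prime)}/\underline{z}^r_w(\mathscr{Y},\ast)^{(\prime)}$ is the filtered colimit of its finitely generated subcomplex quotients $C_\ast/C_{w,\ast}$, and homology commutes with filtered colimits, so the composite $\overline{\pi}^\ast = \mathrm{colim}\, \overline{\pi}^\ast_C$ is zero on homology, as required for assertion (2) of Proposition \ref{key-prop-1-2}.

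At this stage I do not expect a serious conceptual obstacle, as all the heavy lifting was absorbed into Theorem \ref{construction-of-homotopy-1}. The genuine technical content is property (d) — it is precisely because no universal $d$ controls the modulus condition on every cycle at once that the proposition is formulated at the level of finitely generated subcomplexes, with the full assertion recovered only after passing to the filtered colimit. The remaining work above is essentially sign-bookkeeping and a careful unwinding of the definitions of $\underline{z}^r_w$ and its primed variant.
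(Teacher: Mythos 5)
Your proposal is correct and follows essentially the same route as the paper: finite generation gives a uniform $d$ via property (d), properties (a)--(c) of Theorem \ref{construction-of-homotopy-1} then assemble into a null-homotopy of $\overline{\pi}^\ast_C$ modulo the $w$-subcomplexes, and the full claim follows by passing to the filtered colimit. The only cosmetic difference is that the paper first replaces $C_\ast$ by a cofinal subcomplex generated by integral cycles closed under taking components of boundaries, whereas you handle the same point directly by taking the maximum of $d_V$ over the finitely many integral components of a generating set and invoking linearity of $\mathcal{H}_q^d$; both are valid.
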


\begin{proof}
Fix finite number of irreducible closed subsets \[V_l^q \subset \mathscr{Y}^\circ \times \mathbb{A}^q\] such that $V_l^q \in \underline{z}^r (\mathscr{Y},q)^{(\prime )}$ and the family $\{V_\ast^{q-1}\}$ contains all components of all cycles $d_q (V_l^q)$.
Let $C_q$ be the free abelian group generated by $\{V_l^q\}_l$.
Then, we obtain a finitely generated subcomplex \[C_\ast \subset \underline{z}^r (\mathscr{Y},q)^{(\prime )} .\]
Since the complexes of this type are cofinal among all finitely generated subcomplexes, we may assume that the subcomplex $C_\ast$ in the assertion of the proposition is of this type.

For each $V_l^q $, take a positive integer $d_{V_l^q }$ as in Theorem \ref{construction-of-homotopy-1}.
Set 
\[
d:=\max_{l,q} d_{V_l^q}.
\]
Then, by the properties (b) and (d) in Theorem \ref{construction-of-homotopy-1}, we obtain a linear map
\[
\mathcal{H}_q^d (-) : C_q / C_{w,q} \to \underline{z}^r (\mathscr{Y}_K ,q+1 )^{(\prime )} / \underline{z}^r_w (\mathscr{Y}_K ,q+1 )^{(\prime )}
\]
for each $q \geq 0$ (recall that $\underline{z}^r_w (\mathscr{Y}_K ,q+1 )^{(\prime )} = \underline{z}^r(\mathscr{Y}_K ,q+1 )^{(\prime )} \cap \underline{z}^r_w (\mathscr{Y}^\circ_K ,q+1 )$).
The properties (a) and (c) shows that this is a homotopy between $\overline{\pi}^\ast_C$ and zero, noting that $i_1^\ast \mathcal{H}_q^d (V) \in \underline{z}^r (\mathscr{Y}_K ,q)^{(\prime )} \cap \underline{z}^r_w (\mathscr{Y}^\circ_K ,q)^{(\prime )} = \underline{z}^r_w (\mathscr{Y}_K ,q)^{(\prime )}$ for any $V \in C_q$.
This finishes the proof.
 \end{proof}

Thus, we finished the proof of Theorem \ref{moving-lemma}.

\subsection{Proof of cube invariance}\label{pf-of-inv}
Now, we are ready to provide a proof of Theorem \ref{cube-invariance}, following the method of \cite[Corollary 2.6]{B}. 
For simplicity of notations, we treat the higher Chow group with modulus.
To obtain the proof for the {\it na\"{i}ve} higher Chow group with modulus, replace every $z^r_{(w)} (-,\ast ,A), \mathrm{CH}^r (-,\ast ,A)$ with $z^r_{(w)} (-,\ast ,A)', \mathrm{CH}^r (-,\ast ,A)'$, respectively.

Our goal is to prove that the pull-back map
\[
\mathrm{pr}_1^\ast : z^r (\mathscr{X},\ast ,A) \to z^r (\mathscr{X} \otimes \overline{\square}^{(-1)},A) 
\]
induces an isomorphism on homology groups
(recall that $A$ is the $\mathbb{Z}$-module we fixed at the beginning of this section).
First, we prove injectivity.
Set $w:=\Set{\mathscr{X}^\circ \times \{0\}}$ and let $i_0$ be a minimal morphism defined by
\[
i_0 : \mathscr{X}  \cong \mathscr{X}  \otimes \{0 \} \hookrightarrow \mathscr{X} \otimes \overline{\square}^{(-1)},
\]
where the latter map is the natural inclusion.
Consider the composite
\[
z^r (\mathscr{X},\ast ,A) \xrightarrow{\mathrm{pr}_1^\ast } z^r_w (\mathscr{X} \otimes \overline{\square}^{(-1)},\ast ,A)  \xrightarrow{i_0^\ast } z^r (\mathscr{X},\ast ,A),
\]
where $i_0^\ast $ is well-defined by definition of $w$.
We can also check that $\mathrm{pr}_1^\ast $ is well-defined. Indeed, fix $q \geq 0$ and take any prime cycle $V \in z^r (\mathscr{X},q ,A)$. It suffices to prove that $\mathrm{pr}_1^\ast V=V \times \mathbb{A}^1 \subset \mathscr{X}^\circ \times \mathbb{A}^1 \times \mathbb{A}^q$ meets $W \times F$ properly for any $W \in \Set{\mathscr{X}^\circ \times \{0\}, \mathscr{X}^\circ \times \mathbb{A}^1 }$ and for any face $F \subset \mathbb{A}^q$.
If $W=\mathscr{X}^\circ \times \{0\}$, then $V \times \mathbb{A}^1 \cap W \times F = (V \cap \mathscr{X}^\circ \times F) \times \{0\}$ has a proper dimension.
If $W=\mathscr{X}^\circ \times \mathbb{A}^1 $, then $V \times \mathbb{A}^1 \cap W \times F = (V \cap \mathscr{X}^\circ \times F) \times \mathbb{A}^1 $ has a proper dimension.
Thus, the well-defined maps induce the following maps on homology groups for any $q \geq 0$:
\[
\mathrm{CH}^r (\mathscr{X},q ,A) \xrightarrow{\mathrm{pr}_1^\ast } \mathrm{H}_q (z^r (\mathscr{X} \otimes \overline{\square}^{(-1)},\ast ,A)) \xrightarrow{i_0^\ast } \mathrm{CH}^r (\mathscr{X},q ,A),
\]
where the middle group is canonically isomorphic to $\mathrm{CH}^r (\mathscr{X} \otimes \overline{\square}^{(-1)},q ,A)$ by Corollary \ref{special-moving-lemma}.
It is obvious that the composite is the identity map, which implies that $\mathrm{pr}_1^\ast $ is an injection, as desired.

We prove surjectivity of $\mathrm{pr}_1^\ast $ on homology.
Consider the multiplication map
\[
\mu : \mathbb{A}^1 \times \mathbb{A}^1 \to \mathbb{A}^1 , \ \ (s,t) \mapsto st.
\]
In fact, we have the following lemma, which is proven in \cite[Lemma 6.1]{KSY} by using a blowing-up technique.
\begin{lem}\label{adm-multiplication}
The morphism $\mu$ of schemes defines a flat, left fine, {\it admissible} morphism of modulus pairs
\[
\mu : \overline{\square} \otimes \overline{\square} \to \overline{\square}.
\]
\end{lem}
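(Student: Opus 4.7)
The plan is to check the three properties---flatness, left fineness, and admissibility---separately; the first two are immediate, and the substance lies in admissibility, which I will reduce to a divisor comparison after identifying the closure of the graph of $\mu^\circ$ as an explicit blow-up.

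For flatness, I would apply Matsumura's miracle flatness theorem to $\mu^\circ : \mathbb{A}^1 \times \mathbb{A}^1 \to \mathbb{A}^1$: the source is smooth hence Cohen--Macaulay, the target is regular, and every fiber is one-dimensional (the smooth hyperbola $st=c$ for $c \neq 0$, or the union of the two coordinate axes $\{st = 0\}$ over the origin). For left fineness, let $\overline{V} \subset \mathbb{P}^1 \times \mathbb{P}^1 \times \mathbb{P}^1$ be the closure of the graph of $\mu^\circ$. Since the projection $\mathbb{P}^1 \times \mathbb{P}^1 \times \mathbb{P}^1 \to \mathbb{P}^1 \times \mathbb{P}^1$ is proper, so is its restriction to the closed subscheme $\overline{V}$.

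For admissibility, the key step is to identify $\overline{V}$ geometrically. In tri-homogeneous coordinates $([S_0{:}S_1],[T_0{:}T_1],[U_0{:}U_1])$ the closure is cut out by the single equation $U_1 S_0 T_0 = U_0 S_1 T_1$, and I would verify by a chart-by-chart inspection that the first projection $p_{12} : \overline{V} \to \mathbb{P}^1 \times \mathbb{P}^1$ realizes $\overline{V}$ as the blow-up of $\mathbb{P}^1 \times \mathbb{P}^1$ at the two indeterminacy points $(\infty,0)$ and $(0,\infty)$ of the rational map $\mu$; in particular $\overline{V}$ is smooth, hence normal, so $\overline{V}^N = \overline{V}$. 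Writing $E_1,E_2$ for the exceptional divisors over $(\infty,0),(0,\infty)$ and $\widetilde{(-)}$ for the strict transform, the standard blow-up formulas give $p_{12}^{\ast}\{s=\infty\} = \widetilde{\{s=\infty\}} + E_1$ and $p_{12}^{\ast}\{t=\infty\} = \widetilde{\{t=\infty\}} + E_2$. On the other hand, on $\overline{V}$ the function $u$ agrees with $p_{12}^{\ast}(st)$ as a rational function, and pulling back $\mathrm{div}(st) = \{s=0\}+\{t=0\}-\{s=\infty\}-\{t=\infty\}$ the exceptional contributions cancel, yielding $p_3^{\ast}\{u=\infty\} = \widetilde{\{s=\infty\}} + \widetilde{\{t=\infty\}}$. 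Subtracting,
\[
\nu_V^{\ast}\bigl(X^\infty \times \overline{Y}\bigr) - \nu_V^{\ast}\bigl(\overline{X} \times Y^\infty\bigr) \;=\; E_1 + E_2 \;\geq\; 0,
\]
which is the required admissibility inequality.

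The principal obstacle is the chart-by-chart verification that $\overline{V}$ is the stated blow-up; in each of the relevant affine charts the defining equation $U_1 S_0 T_0 = U_0 S_1 T_1$ either expresses one coordinate as a polynomial in the others (hence smoothness is immediate) or reproduces the standard local model for the blow-up of the plane at a point. Once that identification is secured the divisor bookkeeping is automatic, and the excess $E_1+E_2$ in the admissibility estimate reflects the intuition that $\mu$ only becomes regular after resolving those two indeterminacy points.
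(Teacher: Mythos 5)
The paper does not actually give a proof of this lemma: it defers to [KSY, Lemma~6.1], noting only that the cited argument proceeds ``by using a blowing-up technique.'' Your reconstruction takes precisely that route---identifying the closure of the graph as the blow-up of $\mathbb{P}^1 \times \mathbb{P}^1$ at the two indeterminacy points $(\infty,0)$ and $(0,\infty)$---and the divisor bookkeeping you carry out is correct: $p_{12}^\ast\{s=\infty\} + p_{12}^\ast\{t=\infty\}$ picks up both exceptional curves, while the pullback of $\mathrm{div}(st)$ has those contributions cancel, so $p_3^\ast\{u=\infty\}$ is the sum of the two strict transforms alone, leaving the excess $E_1 + E_2 \geq 0$. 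The flatness and left-fineness verifications are also sound (miracle flatness applies since every fiber of $st$ is one-dimensional, and $\overline{V}$ is closed in the proper $(\mathbb{P}^1)^3$). In short, your proposal is correct and matches the approach indicated by the paper's citation.
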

By Lemma \ref{adm-multiplication} and by Remark \ref{adm-coadm-minus}, $\mu$ gives a flat, left fine and {\it coadmissible} map \[\overline{\square}^{(-1)} \otimes \overline{\square}^{(-1)} \to \overline{\square}^{(-1)}\] since $\overline{\square}^{(-1)} \otimes \overline{\square}^{(-1)} = (\overline{\square} \otimes \overline{\square})^{(-1)}.$
Thus, by Proposition \ref{functoriality} (1), we obtain a pullback map \[\mu^\ast : z^r (\mathscr{X} \otimes \overline{\square}^{(-1)},\ast) \to z^r(\mathscr{X} \otimes \overline{\square}^{(-1)} \otimes \overline{\square}^{(-1)},\ast). \]

By Corollary \ref{special-moving-lemma} again, we obtain the following composite map:
\begin{align*}
&H_q (z^r_w (\mathscr{X} \otimes \overline{\square}^{(-1)} ,\ast ,A)) \\
&\xrightarrow{\cong} \mathrm{CH}^r (\mathscr{X} \otimes \overline{\square}^{(-1)} ,q,A) 
\xrightarrow{\mu^\ast } \mathrm{CH}^r (\mathscr{X} \otimes \overline{\square}^{(-1)} \otimes \overline{\square}^{(-1)} ,q,A) \\
&\xrightarrow{\cong}
H_q (z^r_{w'} (\mathscr{X} \otimes \overline{\square}^{(-1)} \otimes \overline{\square}^{(-1)} ,\ast ,A)) \xrightarrow{i_\epsilon^{'\ast} } \mathrm{CH}^r (\mathscr{X} \otimes \overline{\square}^{(-1)} ,q,A).
\end{align*}
Here, $w':=\{ \mathscr{X}^\circ \times \mathbb{A}^1 \times \{0,1\} \}$, $\epsilon \in \{0,1\}$ and $i'_\epsilon$ is a minimal morphism defined by
\[
i'_\epsilon : \mathscr{X} \otimes \overline{\square}^{(-1)} \cong \mathscr{X} \otimes \overline{\square}^{(-1)} \otimes \{\epsilon \} \hookrightarrow \mathscr{X} \otimes \overline{\square}^{(-1)} \otimes \overline{\square}^{(-1)} ,
\]
where the latter map is the natural inclusion.
Take any cycle $Z \in z^r_w (\mathscr{X} \otimes \overline{\square}^{(-1)},q)$.
Since $i_0^{'\ast } $ and $i_1^{'\ast } $ coincide on homology by Lemma \ref{rigidity}, we have
\[
i_0^{'\ast } \mu^\ast (Z) = i_1^{'\ast } \mu^\ast  (Z)
\]
on homology.
The left hand side is equal to $\mathrm{pr}_1^\ast i_0^\ast (Z)$ since $\mu i'_0 = i_0 \mathrm{pr}_1$, and the right hand side is equal to $Z$, which concludes the proof of Theorem \ref{cube-invariance}.

\section{Nilpotent higher Chow group and a module structure over $\mathbb{W}(k)$}



We define a cycle-theoretic analogue of nilpotent $K$-group.
We fix a base field $k$ and an abelian group $A$.

\begin{dfn}
For a modulus pair $\mathscr{X}=(\overline{X},X^\infty )$ and for any integer $r \geq 0$, define
\[
\mathrm{N}z^r (\mathscr{X},\ast ,A):=\mathrm{Coker}\left[ \mathrm{pr}_1^\ast : z^r (\mathscr{X},\ast ,A) \to z^r (\mathscr{X}\otimes \mathbb{A}^1 ,\ast ,A) \right].
\]
We call the homology group
\[
\mathrm{NCH}^r (\mathscr{X},q,A):=\mathrm{H}_q (\mathrm{N}z^r (\mathscr{X},\ast ,A))
\]
the {\it nilpotent higher Chow group with modulus}.
\end{dfn}

The following proposition shows that the nilpotent higher Chow group is the obstruction for the $\mathbb{A}^1$-homotopy invariance.

\begin{prop}\label{nilp-split}
For any modulus pair $\mathscr{X}$ and for any $r,q \geq 0$, we have a canonical splitting
\[
\mathrm{CH}^r (\mathscr{X} \otimes \mathbb{A}^1 ,q,A) \cong \mathrm{CH}^r (\mathscr{X},q,A) \oplus \mathrm{NCH}^r (\mathscr{X},q,A).
\]
\end{prop}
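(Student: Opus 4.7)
The plan is to construct a retraction of $\mathrm{pr}_1^\ast$ using the zero section $i_0 : \mathscr{X} \cong \mathscr{X} \otimes \{0\} \hookrightarrow \mathscr{X} \otimes \mathbb{A}^1$. Since $\mathrm{pr}_1 \circ i_0 = \mathrm{id}_\mathscr{X}$, any cycle-theoretic realization of $i_0^\ast$ will satisfy $i_0^\ast \circ \mathrm{pr}_1^\ast = \mathrm{id}$ and thus split off the nilpotent part.

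First I would note that $\mathrm{pr}_1^\ast : z^r(\mathscr{X},\ast,A) \to z^r(\mathscr{X} \otimes \mathbb{A}^1, \ast, A)$ is injective already at the chain level: an integral cycle $V$ is recovered from $V \times \mathbb{A}^1$. Thus the defining quotient fits into a short exact sequence of complexes
\[
0 \longrightarrow z^r(\mathscr{X}, \ast, A) \xrightarrow{\mathrm{pr}_1^\ast} z^r(\mathscr{X} \otimes \mathbb{A}^1, \ast, A) \longrightarrow \mathrm{N}z^r(\mathscr{X}, \ast, A) \longrightarrow 0,
\]
and it suffices to exhibit a retraction of $\mathrm{pr}_1^\ast$ on homology: this forces the connecting map of the associated long exact sequence to vanish and yields the desired direct sum.

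The obstacle is that $i_0$ is a closed immersion, not flat, so Proposition \ref{functoriality} does not directly define $i_0^\ast$ on the whole cycle complex. To bypass this I would use the moving trick from the proof of Theorem \ref{cube-invariance}. Set $w := \{\mathscr{X}^\circ \times \{0\}\}$, a finite collection of faces of $(\mathscr{X} \otimes \mathbb{A}^1)^\circ = \mathscr{X}^\circ \times \mathbb{A}^1$, and invoke Corollary \ref{special-moving-lemma} with $n=1$, $m_1=0$ to obtain the quasi-isomorphism
\[
z^r_w(\mathscr{X} \otimes \mathbb{A}^1, \ast, A) \hookrightarrow z^r(\mathscr{X} \otimes \mathbb{A}^1, \ast, A).
\]
On $z^r_w$, cycles meet $\mathscr{X}^\circ \times \{0\} \times F$ properly for every face $F \subset \square^q$, so pullback along $i_0 \times \mathrm{id}_{\square^q}$ is defined set-theoretically and lands in $\underline{z}^r(\mathscr{X}^\circ, \ast, A)$. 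Preservation of the modulus condition then follows from the containment lemma (Lemma \ref{containment-lemma}) applied to the minimal inclusion $i_0 : \mathscr{X} \hookrightarrow \mathscr{X} \otimes \mathbb{A}^1$, giving a chain map $i_0^\ast : z^r_w(\mathscr{X} \otimes \mathbb{A}^1, \ast, A) \to z^r(\mathscr{X}, \ast, A)$.

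Finally I would check that $\mathrm{pr}_1^\ast$ factors through $z^r_w$: for a prime $V \subset \mathscr{X}^\circ \times \square^q$, the cycle $V \times \mathbb{A}^1$ meets $\mathscr{X}^\circ \times \{0\} \times F$ in $(V \cap \mathscr{X}^\circ \times F) \times \{0\}$, of correct codimension. A direct inspection then gives $i_0^\ast \circ \mathrm{pr}_1^\ast = \mathrm{id}_{z^r(\mathscr{X}, \ast, A)}$ at the chain level. Passing to homology and identifying $H_q(z^r_w(\mathscr{X} \otimes \mathbb{A}^1, \ast, A))$ with $\mathrm{CH}^r(\mathscr{X} \otimes \mathbb{A}^1, q, A)$ via Corollary \ref{special-moving-lemma}, one obtains a retraction of $\mathrm{pr}_1^\ast$ at the level of $\mathrm{CH}^r$, which splits the long exact sequence into the claimed decomposition. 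The main technical point is the well-definedness of $i_0^\ast$, in particular the preservation of the modulus condition; the rest is routine bookkeeping, parallel to the injectivity part of the proof of Theorem \ref{cube-invariance}.
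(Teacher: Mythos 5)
Your proposal is correct and follows essentially the same route as the paper: the paper's (two-sentence) proof likewise uses Corollary \ref{special-moving-lemma} to define $i_0^\ast$ on homology as a retraction of $\mathrm{pr}_1^\ast$, and your write-up simply fills in the details (well-definedness of $i_0^\ast$ on $z^r_w$ via the containment lemma, factorization of $\mathrm{pr}_1^\ast$ through $z^r_w$, and the splitting of the long exact sequence) that the paper leaves implicit.
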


\begin{proof}
By Corollary \ref{special-moving-lemma}, the minimal morphism $i_0 : \mathscr{X} \cong \mathscr{X} \otimes \{0\} \to \mathscr{X} \otimes \mathbb{A}^1$ induces a map $i_0^\ast : \mathrm{CH}^r (\mathscr{X} \otimes \mathbb{A}^1 ,q,A) \to \mathrm{CH}^r (\mathscr{X},q,A)$.
This is a retraction of the map $\mathrm{pr}_1^\ast : \mathrm{CH}^r (\mathscr{X},q,A) \to \mathrm{CH}^r (\mathscr{X} \otimes \mathbb{A}^1 ,q,A)$.
This finishes the proof.
 \end{proof}

\subsection{Module structure over the ring of Witt vectors}

Recall that the big Witt ring $\mathbb{W}(k)$ is defined as an abelian group 
\[
\mathbb{W}(k)=\left( 1 + uk\llbracket u \rrbracket \right)^\times 
\]
with a unique continuous functorial multiplication $\star$ satisfying $(1-au) \star (1-bu) = 1-abu$.
For any integer $m \geq 0$, the truncated big Witt ring $\mathbb{W}_m (k)$ is defined as the quotient
\[
\mathbb{W}_m (k):=\mathbb{W}(k) / \left( 1 + u^{m+1} k\llbracket u \rrbracket \right)^\times \cong (1+u k[u]/u^{m+1} k[u] )^\times .
\]
Note that every element of $\mathbb{W}_m (k)$ is represented by a polynomial. Moreover, we have
\[
\mathbb{W}(k) \xrightarrow{\cong } \varprojlim_{m} \mathbb{W}_m (k).
\]
We regard $\mathbb{W}(k)$ as a topological ring with respect to the projective limit topology, with each $\mathbb{W}_m (k)$ given the discrete topology.

The goal of this section is to prove the following result.
Fix a $\mathbb{Z}$-module $A$.

\begin{thm}\label{Witt-action}
For any modulus pair $\mathscr{X}$ over $k$ and for arbitrary integers $r,q\geq 0$, the nilpotent higher Chow group $\mathrm{NCH}^r (\mathscr{X},q,A)$ has a structure of $\mathbb{W}(k)$-module, where $\mathbb{W}(k)$ denotes the big Witt-ring of $k$, which is continuous in the following sense:
\begin{itemize}
\item[]
For any element $V \in \mathrm{NCH}^r (\mathscr{X},q,A)$, there exists a positive integer $m \geq 1$ such that $(1+u^m k\llbracket u \rrbracket ) \cdot V = 0$ in $\mathrm{NCH}^r (\mathscr{X},q,A)$.
\end{itemize}
\end{thm}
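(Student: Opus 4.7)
The plan is to imitate Weibel's construction of the $\mathbb{W}(k)$-action on $\mathrm{NK}_*$ \cite{Wei}. First I would define, for each $a \in k$ and $n \geq 1$, an endomorphism $\langle a \rangle_n$ of $\mathrm{NCH}^r(\mathscr{X}, q, A)$ corresponding to the generator $1 - au^n \in \mathbb{W}(k)$. Concretely, for $a \in k^\times$, consider the $k$-morphism $\phi_{a,n} : \mathbb{A}^1 \to \mathbb{A}^1$, $t \mapsto at^n$, and the induced self-morphism of modulus pairs
$$\Phi_{a,n} := \mathrm{id}_\mathscr{X} \times \phi_{a,n} : \mathscr{X} \otimes \mathbb{A}^1 \longrightarrow \mathscr{X} \otimes \mathbb{A}^1.$$
The divisor $X^\infty \times \mathbb{A}^1$ pulls back to itself under $\Phi_{a,n}$, so this is minimal; it is left fine because the source and target share the compactification $\overline{X} \times \mathbb{A}^1$; and it is flat because $\phi_{a,n}$ is finite flat of degree $n$. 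Proposition \ref{functoriality}(1) therefore produces a pullback $\Phi_{a,n}^*$ on $z^r(\mathscr{X} \otimes \mathbb{A}^1, *, A)$. Since $\mathrm{pr}_1 \circ \Phi_{a,n} = \mathrm{pr}_1$, the map $\Phi_{a,n}^*$ fixes the image of $\mathrm{pr}_1^*$ and hence descends to an operator $\langle a \rangle_n$ on $\mathrm{NCH}^r(\mathscr{X}, q, A)$; I put $\langle 0 \rangle_n = \mathrm{id}$ to match the identity element $1 \in \mathbb{W}(k)$.

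Next I would realize the Frobenius $F_n$ and Verschiebung $V_n$ of $\mathbb{W}(k)$ by cycle-theoretic operations: $F_n$ acts as the flat pullback $\Phi_{1,n}^*$, and $V_n$ arises from the proper pushforward $\Phi_{1,n,*}$, which is well-defined because $\Phi_{1,n}$ is minimal, right fine, and proper, so that Proposition \ref{functoriality}(2) applies. The classical identities $F_n V_n = n$ and $F_n V_m = g \cdot V_{m/g} F_{n/g}$ with $g = \gcd(n,m)$ can be verified on cycles using the projection formula and flat base change, together with Lemma \ref{containment-lemma} and Lemma \ref{effectivity-criterion} to track the modulus condition. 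By Cartier's structure theorem every element of $\mathbb{W}(k)$ expands uniquely as a convergent product $\prod_{n \geq 1} V_n(1 - a_n u)$, so the operators $V_n$ and $\langle a \rangle_1$ together generate the desired action; once the defining ring identities are checked on these generators, the full $\mathbb{W}(k)$-module structure assembles by continuity.

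The continuity assertion follows from a degree-bound argument. Every class $[V] \in \mathrm{NCH}^r(\mathscr{X}, q, A)$ is represented by a finite sum of integral cycles in $z^r(\mathscr{X} \otimes \mathbb{A}^1, q, A)$, and for each such component the order of vanishing (or pole) along the $\mathbb{A}^1$-coordinate is bounded by some integer $N = N(V)$. Since $(1 + u^{m+1} k\llbracket u \rrbracket) \subset \mathbb{W}(k)$ is topologically generated by Verschiebung terms $V_n(1-a_nu)$ with $n \geq m+1$, and since $V_n$ acts on cycles by pushforward along $t \mapsto t^n$, which strictly raises the vanishing order in $t$, choosing $m \geq N(V)$ forces the pushforward of $V$ to land in the image of $\mathrm{pr}_1^*$, hence $(1 + u^{m+1} k\llbracket u \rrbracket) \cdot [V] = 0$ in $\mathrm{NCH}^r(\mathscr{X}, q, A)$.

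The main obstacle will be verifying the complete set of Witt-ring relations at the level of cycles. The multiplicative $\star$-structure and the Frobenius–Verschiebung identities are tractable via projection formulas and flat base change, but the additive structure on $\mathbb{W}(k)$ (given by multiplication of formal power series) is not transparently geometric and does not correspond to a single cycle-theoretic operation. The strategy is to reduce everything to the pair $(F_n, V_n)$ via the Cartier decomposition, so that the identities that have to be verified cycle-theoretically are precisely the geometric relations between flat pullbacks and proper pushforwards of the self-maps $\Phi_{1,n}$, all of which ultimately follow from intersection-theoretic manipulations adapted to the modulus setting via the functoriality results of Proposition \ref{functoriality}.
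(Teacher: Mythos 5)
Your framework (Frobenius--Verschiebung operators built from the power maps $t\mapsto at^n$, assembled via the Cartier product decomposition) is a genuinely different route from the paper, which instead lets the group of zero-cycles $z_0(\mathbb{A}^1\setminus\{0\},0)$ act on $z^r(\mathscr{X}\otimes\mathbb{A}^1,\ast,A)$ by pushforward along the multiplication map $\mu$ and only afterwards passes to $\mathbb{W}(k)$ by truncation. The difference of framework would be acceptable, but your proposal has a genuine gap exactly where the theorem is hard: the continuity statement. You argue that pushforward along $t\mapsto t^n$ ``strictly raises the vanishing order in $t$,'' so that for $m\geq N(V)$ the result ``lands in the image of $\mathrm{pr}_1^\ast$.'' This is false at the level of cycles: pushing forward a component of the form $W\times\{c\}$ with $c\neq 0$ along $t\mapsto t^n$ gives $W\times\{c^n\}$, which is never of the form $W'\times\mathbb{A}^1$, so no choice of $n$ makes the operator literally land in $\mathrm{pr}_1^\ast z^r(\mathscr{X},\ast,A)$. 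The actual mechanism (Proposition \ref{vanishing}) is homological rather than cycle-theoretic: for $f\in 1+u^mk[u]$ one builds from $\mu^\ast V$ a cycle $\widetilde V$ on $\mathscr{X}\otimes\mathbb{A}^1\otimes\overline{\square}^{(-1)}$ whose two faces are $\deg f\cdot\mathrm{pr}_1^\ast(i_0^\ast V)$ and $\mathrm{div}(f)\cdot V$, so that $\mathrm{div}(f)\cdot[V]$ vanishes only in homology modulo $\mathrm{pr}_1^\ast$. The integer $m$ is produced by the Archimedean property of Cartier divisors (Lemma \ref{Archimedean-property}, via Lemma \ref{colimit-lemma}) applied to the closure of $\mu^\ast V$ near $\overline{X}\times\mathbb{A}^1\times\{0\}$; it measures compatibility with the modulus condition on the normalization of that closure, not a ``vanishing order along the $\mathbb{A}^1$-coordinate'' of $V$ itself. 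Without this homotopy the action of a general power series is not even defined, so the plan to verify the Witt-ring axioms only on the generators $F_n$, $V_n$, $\langle a\rangle$ and then ``assemble by continuity'' is circular.

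A secondary slip: the constant series $1\in 1+uk\llbracket u\rrbracket$ is the additive identity (the zero) of $\mathbb{W}(k)$, whereas the unit is $1-u$; hence $\langle 0\rangle_n$ must be the zero operator, not the identity. Relatedly, your pullback operator $\Phi_{a,n}^\ast$ for $1-au^n$ and the paper's pushforward of $V$ along multiplication by the points of $\mathrm{div}(1-au^n)$ are different maps on cycles, and their agreement on $\mathrm{NCH}^r(\mathscr{X},q,A)$ would itself need proof before you could import the paper's associativity computation.
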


Before starting the proof, we recall the following result.
It is not used in the proof of the above theorem, but it motivates our construction of the action.

\begin{thm}
For any $m \geq 1$, we have a canonical isomorphism of rings 
\[
\mathrm{div} : \mathbb{W}_m (k) \xrightarrow{\cong } \mathrm{CH}^1 (\mathbb{A}^1 |m\{0\}).
\]
\end{thm}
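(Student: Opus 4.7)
The plan is to construct the map $\mathrm{div}$ explicitly on polynomial representatives, prove well-definedness via an explicit rational equivalence, and then address surjectivity, injectivity, and the ring structure in turn. For a representative polynomial $f(u) = 1 + a_1 u + \cdots + a_m u^m$ of a class $[f] \in \mathbb{W}_m(k)$, let $\mathrm{div}(f)$ denote the zero-divisor of $f$ on $\mathbb{A}^1 = \mathrm{Spec}\,k[u]$. Since $f(0) = 1 \neq 0$, its support lies in $\mathbb{G}_m = \mathbb{A}^1 \setminus \{0\}$, and $\mathrm{div}(f)$ defines an element of the cycle group underlying $\mathrm{CH}^1(\mathbb{A}^1|m\{0\})$. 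Additivity $\mathrm{div}(fg) = \mathrm{div}(f) + \mathrm{div}(g)$ at the level of polynomial multiplication is automatic.

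To show $\mathrm{div}$ descends to a well-defined homomorphism on $\mathbb{W}_m(k)$, it suffices to prove that for every polynomial $h \in 1 + u^{m+1}k[u]$, the class of $\mathrm{div}(h)$ vanishes in $\mathrm{CH}^1(\mathbb{A}^1|m\{0\})$. The natural candidate is the curve $V = \{H(u,y) = 0\} \subset \mathbb{G}_m \times \square^1$ with $H(u,y) := 1 + y\cdot(h(u)-1)$, which satisfies $H(u,0) = 1$ and $H(u,1) = h(u)$, so the face-restrictions give $V|_{y=0} = \emptyset$ and $V|_{y=1} = \mathrm{div}(h)$, yielding $d_1[V] = \pm\,\mathrm{div}(h)$. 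Face-admissibility is immediate, and the modulus condition reduces to a local calculation near $(u,v) = (0,0)$ on $\overline{V} \subset \mathbb{A}^1 \times \mathbb{P}^1$ (with $v = 1/y$), where the defining equation becomes $v = 1 - h(u)$; since $h \equiv 1 \pmod{u^{m+1}}$, one obtains $\mathrm{ord}_q(v) \geq (m+1)\,\mathrm{ord}_q(u)$ at each point $q$ of $\overline{V}^N$ above the modulus locus, giving precisely the required inequality $m\{0\} \times \mathbb{P}^1 \leq \mathbb{A}^1 \times \{\infty\}$ on the normalization.

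Surjectivity is easy: each closed point $\alpha \in \mathbb{G}_m$ has a monic minimal polynomial $p_\alpha(u)$ with $p_\alpha(0) \neq 0$, and $p_\alpha/p_\alpha(0) \in 1 + uk[u]$ satisfies $\mathrm{div}(p_\alpha/p_\alpha(0)) = [\alpha]$. For injectivity, I would construct an inverse $\mathrm{pol}$ sending a zero-cycle $\sum n_i[\alpha_i]$ to the class of $\prod_i (p_{\alpha_i}/p_{\alpha_i}(0))^{n_i}$ modulo $u^{m+1}$; then $\mathrm{pol} \circ \mathrm{div} = \mathrm{id}$, so injectivity of $\mathrm{div}$ reduces to showing that $\mathrm{pol}$ vanishes on boundaries of admissible curves. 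For an irreducible admissible curve $V = \{H(u,y) = 0\}$, this amounts to the congruence $H(u,1)/H(0,1) \equiv H(u,0)/H(0,0) \pmod{u^{m+1}}$, which should follow by a local calculation on $\overline{V}^N$ at points above $\{u=0\} \times \mathbb{P}^1$, using the modulus inequality to bound the vanishing order in $u$ of the ``deformation'' $H(u,y)/H(0,y) - 1$.

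The ring structure is then verified on generators: the Witt product $\star$ is characterized by bilinearity and $(1-au)\star(1-bu) = 1 - abu$, while on the cycle side the corresponding product is pushforward along the group law $\mu : \mathbb{G}_m \times \mathbb{G}_m \to \mathbb{G}_m$, sending $[1/a] \star [1/b]$ to $[1/(ab)]$ and matching $\mathrm{div}(1-abu) = [1/(ab)]$; compatibility on general elements follows by extending via the established additive property. The main obstacle throughout is the injectivity step, which demands careful bookkeeping along each component of $\overline{V}^N$ above the modulus locus --- this is precisely where the delicate analysis underlying the Bloch--Esnault and R\"{u}lling presentations of the additive higher Chow groups enters, and it is what forces the identification with $\mathbb{W}_m(k)$ rather than any a priori larger quotient of the cycle group.
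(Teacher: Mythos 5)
The paper itself does not prove this theorem: it cites R\"ulling's paper and only recalls the two-line construction of $\mathrm{div}$, so you are attempting substantially more than the text does. Within your sketch, the construction of $\mathrm{div}$, the well-definedness argument via the curve $\{1+y(h(u)-1)=0\}$, surjectivity, and the identification of the Witt product with pushforward along multiplication are all essentially correct and standard.

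The genuine gap is exactly where you place ``the main obstacle'': injectivity. You reduce it to the congruence $H(u,1)/H(0,1)\equiv H(u,0)/H(0,0) \pmod{u^{m+1}}$ for every admissible irreducible curve $\{H=0\}$ and assert that it ``should follow by a local calculation.'' That congruence is the entire content of R\"ulling's theorem in weight one (a reciprocity law with modulus) and cannot be waved through; worse, with the modulus condition exactly as defined in this paper (a \emph{non-strict} inequality $\nu^*(m\{0\}\times\mathbb{P}^1)\le\nu^*(\mathbb{A}^1\times\{\infty\})$ on $\overline{V}^N$), it is false. Take $H=1-ayu^m$ with $a\in k^\times$: on the closure of $\{H=0\}$ in $\mathbb{A}^1\times\mathbb{P}^1$ one has $v:=1/y=au^m$, so $\mathrm{ord}(v)=m\cdot\mathrm{ord}(u)$ and the modulus inequality holds with equality; the curve is admissible, its boundary is $\mathrm{div}(1-au^m)$, yet $1-au^m\not\equiv 1\pmod{u^{m+1}}$. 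Hence $\mathrm{div}$ kills all of $1+u^mk[u]$, your inverse $\mathrm{pol}$ is not well defined on boundaries, and the map $\mathbb{W}_m(k)\to\mathrm{CH}^1(\mathbb{A}^1|m\{0\})$ cannot be injective. Your own well-definedness computation already signals this: you obtain $\mathrm{ord}_q(v)\ge(m+1)\mathrm{ord}_q(u)$, strictly more than the required $m\cdot\mathrm{ord}_q(u)$, and that slack is the symptom. The statement is correct only with the usual normalization $\mathbb{W}_m(k)\cong\mathrm{CH}^1(\mathbb{A}^1|(m+1)\{0\})$, equivalently $\mathbb{W}_{m-1}(k)\cong\mathrm{CH}^1(\mathbb{A}^1|m\{0\})$ (the paper's statement carries an off-by-one that is harmless for its later arguments, which never use this theorem). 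With the corrected modulus your outline is sound, but the injectivity step still requires the full local analysis at all points of $\overline{V}^N$ lying over $\{0\}\times\mathbb{P}^1$, i.e.\ R\"ulling's argument, not merely a formal bound.
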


\begin{proof}
The reader can find a proof in \cite{Ru}.
We recall only the construction of $\mathrm{div}$.
Let $f(u) = 1+u g(u) \in 1+u k[u]$ be a polynomial.
Then, it defines a divisor $\mathrm{div}(f)$ on $\mathbb{A}^1 = \mathbb{A}^1_k$.
 \end{proof}

This result leads us to the following construction.
Consider the natural surjection 
\[
z_0 (\mathbb{A}^1 \setminus \{0\},0) \twoheadrightarrow \mathrm{CH}_0 (\mathbb{A}^1 | m\{0\}),
\]
where the left hand side denotes the free abelian group generated by zero cycles on $\mathbb{A}^1 \setminus \{0\}$.
Note that by definition we have
\[
\mathrm{NCH}^r (\mathscr{X},q,A) = \mathrm{H}_q \left( \frac{z^r (\mathscr{X} \otimes \mathbb{A}^1 ,\ast ,A)}{\mathrm{pr}_1^\ast z^r (\mathscr{X} ,\ast ,A)} \right).
\]

Let $\alpha \in z_0 (\mathbb{A}^1 \setminus \{0\},0)$ be a zero cycle and let $V \in z^r (\mathscr{X} \otimes \mathbb{A}^1 ,q)$ be an integral cycle.
Then we obtain a cycle $\alpha \times V$ on $\mathbb{A}^1 \times (\mathscr{X}^\circ \times \mathbb{A}^1 \times \mathbb{A}^q )$.
Consider the multiplication map
\[
\mu : \mathbb{A}^1 \times \mathbb{A}^1 \to \mathbb{A}^1 , (t,u) \mapsto tu .
\]
Let $|\alpha |$ denote the support of the cycle $\alpha$, which is a finite set of closed points on $\mathbb{A}^1 \setminus \{0\}$.
Then, $\alpha \times V$ can be regarded as a cycle on $|\alpha | \times (\mathscr{X}^\circ \times \mathbb{A}^1 \times \mathbb{A}^q )$.
Since $|\alpha|$ does not contain $0$,  the restriction 
\[\mu_\alpha : |\alpha | \times \mathbb{A}^1 \to \mathbb{A}^1 \] of $\mu $
 is a finite morphism.
We define a cycle $\alpha \cdot V$ on $\mathscr{X}^\circ \times \mathbb{A}^1 \times \mathbb{A}^q $ by
\[
\alpha \cdot V := (\mu_\alpha \times \mathrm{id}_{\mathscr{X}^\circ \times \mathbb{A}^q } )_\ast (\alpha \times V).
\]
This construction is linearly extended to the following homomorphism:
\[
z_0 (\mathbb{A}^1 \setminus \{0\},0) \otimes z^r (\mathscr{X} \otimes \mathbb{A}^1 ,q,A) \to z^r (\mathscr{X} \otimes \mathbb{A}^1 ,q,A) ,\ \  \alpha \otimes V \mapsto \alpha \cdot V,
\]
satisfying $\mathrm{div}(1-u) \cdot V =\{1\} \cdot V=V$.

\begin{prop}
For any $q \geq 0$,
the homomorphism defined as above induces a homomorphism
\[
z_0 (\mathbb{A}^1 \setminus \{0\},0) \otimes \frac{z^r (\mathscr{X} \otimes \mathbb{A}^1 ,q,A)}{\mathrm{pr}_1^\ast z^r (\mathscr{X},q,A)} \to \frac{z^r (\mathscr{X} \otimes \mathbb{A}^1 ,q,A)}{\mathrm{pr}_1^\ast z^r (\mathscr{X},q,A)}.
\]
Moreover, it is compatible with the differential homomorphisms of the complex $\frac{z^r (\mathscr{X} \otimes \mathbb{A}^1 ,\ast ,A)}{\mathrm{pr}_1^\ast z^r (\mathscr{X},\ast  ,A)}$.
\end{prop}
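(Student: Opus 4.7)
The plan is to verify two assertions: (a) the action $\alpha \otimes V \mapsto \alpha \cdot V$ sends the subgroup $\mathrm{pr}_1^\ast z^r(\mathscr{X},q,A)$ into itself, so that it descends to the quotient; and (b) the induced action commutes with the cubical differential. Both will reduce to direct cycle-theoretic manipulation of the defining pushforward $\mu_\alpha \times \mathrm{id}_{\mathscr{X}^\circ \times \mathbb{A}^q}$.

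For (a), fix $W \in z^r(\mathscr{X},q,A)$ and set $V := \mathrm{pr}_1^\ast W$. Viewed on $\mathscr{X}^\circ \times \mathbb{A}^1 \times \mathbb{A}^q$, the cycle $V$ is constant in the extra $\mathbb{A}^1$-direction, hence invariant under the scaling automorphisms $(x,u,y) \mapsto (x,cu,y)$ for $c \in \overline{k}^\times$. For a single closed point $a \in \mathbb{A}^1 \setminus \{0\}$ with $d := [k(a):k]$, the restricted multiplication map $\mu_a: |a| \times \mathbb{A}^1 = \mathbb{A}^1_{k(a)} \to \mathbb{A}^1$, $u \mapsto au$, factors as the multiplication-by-$a$ automorphism of $\mathbb{A}^1_{k(a)}$ followed by the degree-$d$ base-change projection $\pi: \mathbb{A}^1_{k(a)} \to \mathbb{A}^1$. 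Since multiplication by $a$ fixes the base-changed cycle $V_{k(a)} = \pi^\ast V$, the projection formula gives
\[
[a] \cdot V \;=\; \pi_\ast \pi^\ast V \;=\; d \cdot V.
\]
Extending linearly, $\alpha \cdot V = \deg(\alpha) \cdot V = \mathrm{pr}_1^\ast(\deg(\alpha) \cdot W) \in \mathrm{pr}_1^\ast z^r(\mathscr{X},q,A)$.

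For (b), recall $d_q = \sum_{i=1}^q (-1)^i (d_i^1 - d_i^0)$, where each $d_i^\epsilon$ is pull-back along the face inclusion $\mathrm{id}_{\mathscr{X}^\circ \times \mathbb{A}^1} \times \delta_i^\epsilon$, which acts on the $\square^q$ factor alone. The action $\alpha \cdot (-)$ is pushforward along $\mu_\alpha \times \mathrm{id}_{\mathscr{X}^\circ \times \mathbb{A}^q}$, which is identity on $\square^q$ and finite on the remaining factors. These two operations act on disjoint factors, so they fit into a Cartesian square
\[
\xymatrix{
|\alpha| \times \mathscr{X}^\circ \times \mathbb{A}^1 \times \square^{q-1} \ar[r]^{\mu_\alpha \times \mathrm{id}} \ar[d] & \mathscr{X}^\circ \times \mathbb{A}^1 \times \square^{q-1} \ar[d] \\
|\alpha| \times \mathscr{X}^\circ \times \mathbb{A}^1 \times \square^q \ar[r]_{\mu_\alpha \times \mathrm{id}} & \mathscr{X}^\circ \times \mathbb{A}^1 \times \square^q
}
\]
with vertical arrows the face inclusions. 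Since $\mu_\alpha$ is finite, the base-change / projection formula yields $d_i^\epsilon(\alpha \cdot V) = \alpha \cdot d_i^\epsilon V$ for every $i,\epsilon$; summing gives $d_q(\alpha \cdot V) = \alpha \cdot d_q V$. Combined with (a), the action descends to the quotient complex $\mathrm{N}z^r(\mathscr{X},\ast,A)$ and commutes with its induced differential.

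The main potential obstacle is the bookkeeping needed to ensure that the finite pushforwards and base changes occurring above really land in the declared cycle subgroups (i.e.\ that modulus and proper-intersection are preserved throughout). This is essentially handled by the observation that $\mu_\alpha \times \mathrm{id}$ is a minimal and proper morphism of modulus pairs $|\alpha| \otimes (\mathscr{X} \otimes \mathbb{A}^1) \to \mathscr{X} \otimes \mathbb{A}^1$, since the pull-back of $X^\infty \times \mathbb{A}^1$ is $|\alpha| \times X^\infty \times \mathbb{A}^1$; Proposition \ref{functoriality}(2) together with Lemma \ref{effectivity-criterion} then takes care of the modulus side, and the face-dimension side is immediate from the finiteness of $\mu_\alpha$.
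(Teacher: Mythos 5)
Your proof is correct and takes essentially the same approach as the paper. The paper's version of part (a) is a one-line calculation $\alpha\cdot(V\times\mathbb{A}^1)=(\mu_\alpha)_\ast(\alpha\times\mathbb{A}^1)\times V=\deg(\alpha)\cdot(V\times\mathbb{A}^1)$, exploiting that the cycle and the map are both products; your factorization of $\mu_a$ through the scaling automorphism of $\mathbb{A}^1_{k(a)}$ and the field-extension projection reaches the same identity via the projection formula and is equally valid. For part (b) the paper writes out the chain of equalities $(\iota_i^\epsilon)^\ast(\mu_\alpha\times\mathrm{id})_\ast=(\mu_\alpha\times\mathrm{id})_\ast(\iota_i^\epsilon)^\ast$, which is exactly the base-change/projection-formula content of your Cartesian square.
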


\begin{proof}
The assertions for the $A$-coefficient-case follow from the $\mathbb{Z}$-coefficient-case by applying $\otimes_\mathbb{Z} A$, so we may assume that $A=\mathbb{Z}$.
Let $V \in z^r (\mathscr{X},q)$ be an integral cycle, and let $\alpha \in \mathbb{A}^1 \setminus \{0\}$ be a closed point.
 Then, we have
\begin{align*}
\alpha \cdot (\mathrm{pr}_1^\ast V ) 
&=\alpha \cdot (V \times \mathbb{A}^1 ) 
=(\mu_\alpha \times \mathrm{id}_{\mathscr{X}^\circ \times \mathbb{A}^q })_\ast (\alpha \times V \times \mathbb{A}^1 )\\
&=(\mu_\alpha )_\ast (\alpha \times \mathbb{A}^1 ) \times V 
=(\deg (\alpha ) \cdot \mathbb{A}^1 ) \times V \\
&= \deg (\alpha ) \cdot (\mathrm{pr}_1^\ast V ) .
\end{align*}
In particular, we have $\alpha \cdot \mathrm{pr}_1^\ast z^r (\mathscr{X},q) \subset \mathrm{pr}_1^\ast z^r (\mathscr{X},q)$, which proves the first statement of the lemma.
To prove the compatibility, 
let $V \in z^r (\mathscr{X} \otimes \mathbb{A}^1 ,q)$ be an integral cycle, and let $\alpha \in \mathbb{A}^1 \setminus \{0\}$ be a closed point.
We would like to prove that $d_q (\alpha \cdot V) = \alpha \cdot d_q V$, where
\[
d_q =\sum_{i,\epsilon } (\iota_i^\epsilon )^\ast  : z^r (\mathscr{X} \otimes \mathbb{A}^1 ,q) \to z^r (\mathscr{X} \otimes \mathbb{A}^1 ,q-1)
\]
denotes the $q$-th differential homomorphism.
It is a consequence of the following calculation:
\begin{align*}
(\iota_i^\epsilon )^\ast (\alpha \cdot V)
 &= (\iota_i^\epsilon )^\ast (\mu_\alpha \times \mathrm{id}_{\mathscr{X}^\circ \times \mathbb{A}^q } )_\ast (\alpha \times V) \\
 &= (\mu_\alpha \times \mathrm{id}_{\mathscr{X}^\circ \times \mathbb{A}^{q-1} } )_\ast (\iota_i^\epsilon )^\ast (\alpha \times V) \\
 &=(\mu_\alpha \times \mathrm{id}_{\mathscr{X}^\circ \times \mathbb{A}^{q-1} } )_\ast  (\alpha \times (\iota_i^\epsilon )^\ast V) \\
 &=\alpha \cdot (\iota_i^\epsilon )^\ast V.
\end{align*}
This finishes the proof.
 \end{proof}

By the above proposition, we obtain an action 
\[
z_0 (\mathbb{A}^1 \setminus \{0\},0) \otimes \mathrm{NCH}^r (\mathscr{X},q,A) \to  \mathrm{NCH}^r (\mathscr{X},q,A)
\]
for any $q \geq 0$.
For any polynomial $f(u) \in 1+uk[u]$, the divisor $\mathrm{div} (f)$ belongs to $z_0 (\mathbb{A}^1 \setminus \{0\},0)$.
The following is a key proposition.

\begin{prop}\label{vanishing}
For any $V \in \mathrm{NCH}^r  (\mathscr{X} ,q,A)$, there exists a positive integer $m$ such that $\mathrm{div} (1+u^m k[u]) \cdot V=0$.
\end{prop}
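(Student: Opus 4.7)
My plan is to construct an explicit chain-level homotopy showing that $\mathrm{div}(1+u^m g(u))\cdot V$ is, modulo boundaries, a cycle in the image of $\mathrm{pr}_1^\ast\colon z^r(\mathscr{X},\ast)\to z^r(\mathscr{X}\otimes\mathbb{A}^1,\ast)$, which therefore vanishes in $\mathrm{NCH}^r(\mathscr{X},q,A)$. I reduce to $A=\mathbb{Z}$ by tensoring, and to a single integral representative $V\in\underline{z}^r(\mathscr{X}\otimes\mathbb{A}^1,q)_0$. The first step is to quantify how far $V$ fails the modulus condition after compactifying the $u$-direction: applying Lemma \ref{Archimedean-property} to the normalized closure $\overline{\overline{V}}{}^N\subset\overline{X}\times\mathbb{P}^1_u\times(\mathbb{P}^1)^q$, exactly as in the proof of part (d) of Theorem \ref{construction-of-homotopy-1}, I obtain an integer $m_V\geq 1$ with
\[
\nu^\ast\bigl(X^\infty\times\mathbb{P}^1_u\times(\mathbb{P}^1)^q\bigr)\ \leq\ \nu^\ast\bigl(\overline{X}\times\mathbb{P}^1_u\times F_q\bigr)+m_V\cdot\nu^\ast\bigl(\overline{X}\times\{\infty\}_u\times(\mathbb{P}^1)^q\bigr),
\]
equivalently $V\in\underline{z}^r(\mathscr{X}\otimes\overline{\square}^{(-m_V)},q)$. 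Now fix $m\geq m_V$ and $g\in k[v]$, set $f(v):=1+v^m g(v)$, and introduce the auxiliary hypersurface
\[
W\ :=\ \{\,1+t\,v^m g(v)=0\,\}\ \subset\ (\mathbb{A}^1_v\setminus\{0\})\times\mathbb{A}^1_t,
\]
whose slices at $t=1$ and $t=0$ are $\mathrm{div}(f)$ and $\emptyset$ respectively. Define
\[
H\ :=\ p_\ast(W\times V),\qquad p\colon(v,t,x,u,\vec{s})\mapsto(x,vu,\vec{s},t),
\]
regarded as a cycle on $\mathscr{X}^\circ\times\mathbb{A}^1\times\mathbb{A}^{q+1}$ with $t$ playing the role of the $(q{+}1)$-st cubical coordinate.

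By the very definition of the action recalled before the proposition, $H|_{t=1}=\mathrm{div}(f)\cdot V$. On the other hand, the closure of $W$ in $\mathbb{P}^1_v\times\mathbb{A}^1_t$ acquires the point $(v,t)=(\infty,0)$, and since the multiplication $(v,u)\mapsto vu$ is indeterminate at $(v,u)=(\infty,0)$, a blow-up computation on $\mathbb{P}^1_v\times\mathbb{A}^1_t\times\mathbb{P}^1_u$ identifies $H|_{t=0}$ as a positive multiple of $\mathrm{pr}_1^\ast(V|_{u=0})$; in particular $H|_{t=0}\in\mathrm{pr}_1^\ast\, z^r(\mathscr{X},q)$. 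The faces of $H$ in the $\vec{s}$-directions are pushforwards of $W\times d_i^\epsilon V$, hence assemble into $p_\ast(W\times dV)$, which is zero modulo degenerate cycles since $V$ represents a cycle class. Combining these identifications yields the boundary relation
\[
\mathrm{div}(f)\cdot V\ \equiv\ H|_{t=0}\pmod{d H\ +\ \underline{z}^r(\mathscr{X}\otimes\mathbb{A}^1,q)_{\mathrm{degn}}}
\]
in $\underline{z}^r(\mathscr{X}\otimes\mathbb{A}^1,q)$, with the right-hand side lying in $\mathrm{pr}_1^\ast z^r(\mathscr{X},q)$, which is the desired vanishing $\mathrm{div}(f)\cdot[V]=0$ in $\mathrm{NCH}^r(\mathscr{X},q)$.

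The main obstacle is verifying that $H$ actually lies in the subcomplex $\underline{z}^r(\mathscr{X}\otimes\mathbb{A}^1,q+1)$, i.e.\ that it satisfies the modulus condition of $\mathscr{X}\otimes\mathbb{A}^1$ on the normalized closure of $H$ in $\overline{X}\times\mathbb{A}^1\times(\mathbb{P}^1)^q\times\mathbb{P}^1_t$. The delicate region is $t=\infty$: the equation $1+t\,v^m g(v)=0$ forces $v\sim t^{-1/m}$ there, so the bad locus $u=\infty$ of $\overline{\overline{V}}$ is transported into $t=\infty$ under $u=u'/v$ with ramification index $m$. Consequently, the $m_V$ units of modulus slack at $u=\infty$ produced in the first step are absorbed, after pushforward, by only $m_V/m$ units of slack at $t=\infty$, so the single copy of $\{\infty\}_t$ contained in $F_{q+1}$ suffices precisely when $m\geq m_V$. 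This ramification/blow-up estimate is the main technical content of the argument, and it is exactly where the restriction $f\in 1+u^m k[u]$ (rather than an arbitrary $f\in\mathbb{W}(k)$) becomes essential.
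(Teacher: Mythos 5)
Unwinding the paper's own proof shows that your homotopy cycle is exactly the cycle $\widetilde{V}=\tau^\ast F_\ast j^\ast \lambda^\ast \mu^\ast V$ that the paper constructs: the relation $f(t')=\tau(t)$ rearranges, using $\tau(t)=(t-1)/t$ and $f(t')-1=(t')^m g(t')$, to $1+t\,(t')^m g(t')=0$, which is precisely your hypersurface $W$ under $t'\leftrightarrow v$, while $\lambda^\ast\mu^\ast V$ is the locus of $(x,s,t')$ with $(x,s/t')\in V$, encoding your $u=w/v$. So the construction is the paper's, re-packaged as a single pushforward $p_\ast(W\times V)$. The re-packaging, however, buries the two verifications that are the actual content of the paper's proof. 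First, $p_\ast(W\times V)$ is not a well-defined cycle pushforward as you have written it: $p$ restricted to $W\times V$ fails the valuative criterion of properness, since on $W$ the coordinate $v$ escapes to $\infty$ as $t\to 0$, so a limit point would lie outside $(\mathbb{A}^1_v\setminus\{0\})\times\mathbb{A}^1_t$. You bring in the blow-up at $(v,u)=(\infty,0)$ only to compute $H|_{t=0}$, but it is needed already to make $H$ well-defined. The paper avoids this entirely: its only proper pushforward is $F_\ast$ along the finite self-map $F$ of $\mathbb{P}^1_{t'}$ alone, which never mixes the $u$- and $v$-directions and so never sees the indeterminacy.

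Second, and this is the substantive gap, the modulus condition $H\in\underline{z}^r(\mathscr{X}\otimes\mathbb{A}^1,q+1,A)$ is left as a ramification heuristic, even though you correctly flag it as the main technical content and as the place where $f\in 1+u^m k[u]$ is essential. The paper proves it by routing $\widetilde{V}$ through the chain of modulus pairs $\mathscr{X}\otimes\mathbb{A}^1\otimes(\mathbb{P}^1,-m\{0\})$, $(\mathbb{P}^1,-F^\ast\{1\})$, $(\mathbb{P}^1,-\{1\})$, $(\mathbb{P}^1,-\{\infty\})$; every arrow is an instance of the already-established flat-pullback/proper-pushforward functoriality (Proposition \ref{functoriality}), and the inequality $F^\ast\{1\}\geq m\{0\}$, which is exactly the statement $f\in 1+u^m k[u]$, is what spends the $m$ units of slack. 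Your integer $m_V$ is in addition obtained by compactifying $V$ in the $u$-direction, whereas the paper's $m$ is produced by applying (in effect) Lemma \ref{colimit-lemma} to $V_2=\lambda^\ast\mu^\ast V$ with the $t'$-direction compactified. These are not the same calculation: the relevant normalized closures live over $\overline{X}\times\mathbb{P}^1_u\times(\mathbb{P}^1)^q$ versus $\overline{X}\times\mathbb{A}^1_s\times\mathbb{P}^1_{t'}\times(\mathbb{P}^1)^q$, and the assertion that $m\geq m_V$ makes the modulus condition for $H$ hold is exactly the unproved ramification estimate. You also omit the preliminary passage to $z^r_w$ with $w=\{\mathscr{X}^\circ\times\{0\}\}$, which the paper needs for the face maps $i_0^\ast,i_1^\ast$ on the homotopy cycle to be defined.
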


Before starting the proof, we prepare an elementary lemma.

\begin{lem}\label{colimit-lemma}
Let $\mathscr{Y}=(\overline{Y},Y^\infty )$ be a modulus pair over $k$, and let $E$ be an effective modulus pair on $\overline{Y}$ such that $|Y^\infty |$ and $|D|$ have no common components. Let $w$ be a finite set of locally closed subsets of $\overline{Y}\setminus (|E|\cup |Y^\infty |)$.
Then, for any integers $r,q$, we have 
\[
z^r_w ((\overline{Y}\setminus |E|,  Y^\infty |_{\overline{Y}\setminus |E|}) ,q,A) = \bigcup_{m \geq 1} z^r_w ((\overline{Y},Y^\infty - mE), q, A).
\]
\end{lem}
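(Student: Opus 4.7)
\smallskip

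The plan is to prove the two inclusions separately, with the easy direction coming directly from restriction of Cartier divisors and the nontrivial direction relying on Lemma \ref{Archimedean-property}. Throughout the argument, since the open subsets $\overline{Y}\setminus(|Y^\infty|\cup |E|)$ agree on both sides (note that $|Y^\infty-mE|=|Y^\infty|\cup|E|$ because $|Y^\infty|$ and $|E|$ have no common components), and the auxiliary collection $w$ is the same, only the modulus condition needs to be compared; the assertion also reduces immediately to $A=\mathbb{Z}$ by $\otimes_\mathbb{Z}A$, and to the case of an integral cycle $V$.

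For the inclusion $\supset$, fix $m\geq 1$ and take an integral $V\in z^r_w((\overline{Y},Y^\infty-mE),q)$. Let $\overline{V}_2\subset \overline{Y}\times(\mathbb{P}^1)^q$ be the closure of $V$ with normalization $\nu_2\colon \overline{V}_2^N\to \overline{Y}\times(\mathbb{P}^1)^q$, and let $\overline{V}_1\subset (\overline{Y}\setminus|E|)\times(\mathbb{P}^1)^q$ be the closure of $V$ with normalization $\nu_1$. Then $\overline{V}_1^N$ is the preimage in $\overline{V}_2^N$ of the open $(\overline{Y}\setminus|E|)\times(\mathbb{P}^1)^q$, so $\nu_1^\ast(E\times(\mathbb{P}^1)^q)=0$. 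Restricting the given inequality $\nu_2^\ast(Y^\infty\times(\mathbb{P}^1)^q)\leq \nu_2^\ast(\overline{Y}\times F_q)+m\nu_2^\ast(E\times(\mathbb{P}^1)^q)$ to $\overline{V}_1^N$ yields exactly the modulus inequality for $(\overline{Y}\setminus|E|,Y^\infty|_{\overline{Y}\setminus|E|})$.

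For the inclusion $\subset$, take an integral $V$ satisfying the modulus condition for $(\overline{Y}\setminus|E|,Y^\infty|_{\overline{Y}\setminus|E|})$ and form $\overline{V}_2$ and $\nu_2$ as above. Consider the Cartier divisor
\[
D:=\nu_2^\ast(\overline{Y}\times F_q)-\nu_2^\ast(Y^\infty\times(\mathbb{P}^1)^q)
\]
on $\overline{V}_2^N$, and the effective Cartier divisor $E':=\nu_2^\ast(E\times(\mathbb{P}^1)^q)$. The hypothesis on $V$ says precisely that $D|_{\overline{V}_2^N\setminus|E'|}\geq 0$, and $\overline{V}_2^N$ is quasi-compact (being the normalization of a closed subscheme of the quasi-compact scheme $\overline{Y}\times(\mathbb{P}^1)^q$; the quasi-compactness question for $\overline{Y}$ is local, and cycles live on quasi-compact opens). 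Applying Lemma \ref{Archimedean-property}, there exists $m\geq 1$ with $D+mE'\geq 0$ on $\overline{V}_2^N$, which is exactly the modulus condition for $V$ with respect to $(\overline{Y},Y^\infty-mE)$. Taking the union over such $m$ concludes the proof.

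The main obstacle I anticipate is merely bookkeeping: matching the various normalizations and closures (in particular verifying $\overline{V}_1^N$ is an open of $\overline{V}_2^N$, and justifying quasi-compactness so that Lemma \ref{Archimedean-property} applies). Once that is settled, the Archimedean property does all the real work.
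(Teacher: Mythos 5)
Your proof is correct and takes essentially the same approach as the paper: the easy containment follows by restriction of Cartier divisors to the open subset, and the nontrivial containment is exactly an application of Lemma \ref{Archimedean-property} to the normalization of the closure in $\overline{Y}\times(\mathbb{P}^1)^q$. Your argument is somewhat more explicit than the paper's, which simply declares the $\supset$ direction to hold ``by definition'' and does not spell out the identification of the normalization of the open part with the complement of $|E'|$ in $\overline{V}^N$, but the substance is identical.
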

\begin{proof}
The right hand side is a subgroup of the left hand side by definition.
Take any irreducible closed subset $V$ which belongs to the left hand side.
Let $\overline{V}$ be the closure of $V$ in $\overline{Y} \times (\mathbb{P}^1 )^q$, and set $Z=\overline{V} \cap (\overline{Y} \setminus |E|) \times (\mathbb{P}^1 )^q$.
Let $\overline{V}^N$ and $Z^N$ be the normalizations of $\overline{V}$ and $Z$, respectively.
Then, by assumption,
we have 
\[
Y^\infty \times (\mathbb{P}^1 )^q |_{Z^N } \leq \overline{Y} \times F_q |_{Z^N} .
\]
By Lemma \ref{Archimedean-property}, there exists a positive integer $m$ such that
\[
Y^\infty \times (\mathbb{P}^1 )^q |_{\overline{V}^N } \leq \overline{Y} \times F_q |_{\overline{V}^N } + m E \times (\mathbb{P}^1 )^q |_{\overline{V}^N }.
\]
This inequality implies that $V$ belongs to $z^r_w ((\overline{Y},Y^\infty - mE), q, A)$, finishing the proof.
 \end{proof}

\begin{proof}[of Proposition \ref{vanishing}]
Note that $\mathrm{NCH}^r (\mathscr{X},q,A) \cong \mathrm{H}_q \left( \frac{z^r_w (\mathscr{X} \otimes \mathbb{A}^1 ,\ast ,A)}{\mathrm{pr}_1^\ast z^r (\mathscr{X},\ast ,A)} \right)$, where $w=\{\mathscr{X}^\circ \times \{0\}\}$.
Take any element $Z \in  z^r_w (\mathscr{X}\otimes \mathbb{A}^1 ,q,A)$, and fix an irreducible component $V$ of $Z$.
Pulling back the cycle $V \in z^r_w (\mathscr{X}\otimes \mathbb{A}^1 ,q,A)$ by the flat morphism $\mathrm{id}_{\mathscr{X^\circ }} \times \mu$, we obtain a cycle \[V_1 := (\mathrm{id}_{\mathscr{X^\circ }} \times \mu)^\ast V \in z^r_w (\mathscr{X}\otimes \mathbb{A}^1 \otimes \mathbb{A}^1 ,q,A).\]
By the isomorphism $\lambda : \mathbb{A}^1 \cong \mathbb{P}^1 \setminus \{0\}, t \mapsto t^{-1}$, we obtain a cycle \[V_2 := \lambda^\ast V_1 \in z^r_w (\mathscr{X}\otimes \mathbb{A}^1 \otimes (\mathbb{P}^1 \setminus \{0\}) ,q,A) = \bigcup_{m \geq 1} z^r_w (\mathscr{X}\otimes \mathbb{A}^1 \otimes (\mathbb{P}^1 -m\{0\}) ,q,A) \]
corresponding to $V_1$, where the second identity follows from Lemma \ref{Archimedean-property} applied to $\mathscr{Y}=\mathscr{X}\otimes \mathbb{P}^1$ and $E=\overline{X} \times \{0\}$.
Take a positive integer $m$ satisfying $V_2  \in z^r_w (\mathscr{X}\otimes \mathbb{A}^1 \otimes (\mathbb{P}^1 -m\{0\}) ,q,A)$.

Let $f(u) = 1+ u^m g(u) \in 1+u^m k[u]$ be a polynomial.
Then, it defines a finite morphism $f : \mathbb{A}^1 \to \mathbb{A}^1$ which extends to a finite morphism 
\[
F : \mathbb{P}^1 \to \mathbb{P}^1 , \ \ [U_0 : U_1] \mapsto [U_0^{\deg f} : U_0^{\deg f} \cdot f(U_1 / U_0 )].
\]
Then, $F^\ast \{1\}$ is an effective Cartier divisor on $\mathbb{P}^1$ such that $F^\ast \{1\} \geq m\{0\}$.
Therefore, we obtain a composite
\begin{align*}
z^r_w (\mathscr{X} \otimes \mathbb{A}^1 \otimes (\mathbb{P}^1 , -m\cdot \{0\}),q,A) &\xrightarrow{j^\ast } z^r_w (\mathscr{X} \otimes \mathbb{A}^1 \otimes (\mathbb{P}^1 , -F^\ast \{1\}),q,A)  \\
&\xrightarrow{F_\ast } z^r_w (\mathscr{X} \otimes \mathbb{A}^1 \otimes (\mathbb{P}^1 , -\{1\}),q,A) ,
\end{align*}
where the first map is induced by the open immersion $j : \mathbb{P}^1 \setminus F^\ast \{1\} \hookrightarrow \mathbb{P}^1 \setminus \{0\}$.
Now, we set
\[
V_3 := F_\ast j^\ast V_2 \in z^r_w (\mathscr{X} \otimes \mathbb{A}^1 \otimes (\mathbb{P}^1 , -\{1\}),q,A).
\]
Consider the automorphism $\tau : \mathbb{P}^1 \to \mathbb{P}^1$, which sends $(0,1,\infty )$ to $(\infty ,0,1)$.
Then, we obtain a cycle \[\widetilde{V}:=\tau^\ast V_3 \in z^r_w (\mathscr{X} \otimes \mathbb{A}^1 \otimes (\mathbb{P}^1 , -\{\infty \}),q,A) = z^r_w (\mathscr{X} \otimes \mathbb{A}^1 \otimes \overline{\square}^{(-1)},q,A).\]
By linearly extending the above construction, and by taking maximum of the integers $m$ for all irreducible components $V$ of $Z$, we obtain a cycle 
\[
\widetilde{Z} \in z^r_w (\mathscr{X} \otimes \mathbb{A}^1 \otimes \overline{\square}^{(-1)},q,A) .
\]
Regard $\widetilde{Z}$ as a cycle in $z^r (\mathscr{X} \otimes \mathbb{A}^1 ,q+1,A)$ by the map $\Phi_q $ constructed in the proof of Lemma \ref{rigidity}.
If $Z = \mathrm{pr}_1^\ast Z' = Z' \times \mathbb{A}^1$ for some $Z'\in z^r (\mathscr{X},q-1,A)$, we can check that 
$\widetilde{Z}=\deg{f} \cdot Z \times \mathbb{A}^1$, noting that $\mu^\ast Z = Z \times \mathbb{A}^1$ and $F_\ast (Z \times \mathbb{A}^1 ) = \deg f \cdot Z \times \mathbb{A}^1$.
Therefore, the class $[\widetilde{Z}]$ in $\frac{z^r_w (\mathscr{X} \otimes \mathbb{A}^1 ,q+1,A)}{\mathrm{pr}_1^\ast z^r (\mathscr{X},q+1,A)}$ is determined by the class $[Z]$ in $\frac{z^r_w (\mathscr{X} \otimes \mathbb{A}^1 ,q,A)}{\mathrm{pr}_1^\ast z^r (\mathscr{X},q,A)}$.
Consider the inclusion morphisms
\[i_\epsilon : \mathscr{X}^\circ \times \mathbb{A}^1 \times \mathbb{A}^q \cong \mathscr{X}^\circ \otimes \mathbb{A}^1 \times \mathbb{A}^q  \otimes \{\epsilon \} \hookrightarrow  \mathscr{X}^\circ \otimes \mathbb{A}^1 \times \mathbb{A}^q  \times \mathbb{P}^1 , \ \ \epsilon \in \{0,1,\infty \} .\]
Noting that the above construction is compatible with the restrictions to the faces of $\mathbb{A}^q$,
if we assume that $Z$ satisfies $d_q [Z] = 0$ in $\frac{z^r_w (\mathscr{X} \otimes \mathbb{A}^1 ,q-1,A)}{\mathrm{pr}_1^\ast z^r (\mathscr{X},q-1,A)}$, then we have $d_{q+1} [\widetilde{Z}] =\pm ( i_0^\ast [\widetilde{Z}] - i_1^\ast [\widetilde{Z}])$ in $\frac{z^r_w (\mathscr{X} \otimes \mathbb{A}^1 ,q,A)}{\mathrm{pr}_1^\ast z^r (\mathscr{X},q,A)}$.
Recall that what we need to prove is that $\mathrm{div}(f) \cdot [Z] = 0$ in $\mathrm{NCH}^r (\mathscr{X},q,A) \cong \mathrm{H}_q \left( \frac{z^r_w (\mathscr{X} \otimes \mathbb{A}^1 ,\ast ,A)}{\mathrm{pr}_1^\ast z^r (\mathscr{X},\ast ,A)} \right)$.
Therefore, we are reduced to showing
\begin{claim} For any $Z \in z^r_w (\mathscr{X} \otimes \mathbb{A}^1 ,q,A)$,  we have
$\mathrm{(1)}$ 
$i_0^\ast \widetilde{Z} = \deg f \cdot \mathrm{pr}_1^\ast (i_0^\ast Z)$, and 
$\mathrm{(2)}$ 
$i_1^\ast \widetilde{Z}=\mathrm{div}(f) \cdot Z$.
\end{claim}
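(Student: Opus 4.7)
The plan is to follow the cycle $\widetilde V$ stage by stage through the construction
\[
V \;\stackrel{\mu^\ast}{\longmapsto}\; V_1 \;\stackrel{\lambda^\ast}{\longmapsto}\; V_2 \;\stackrel{F_\ast j^\ast}{\longmapsto}\; V_3 \;\stackrel{\tau^\ast}{\longmapsto}\; \widetilde V,
\]
keeping track of the explicit coordinate transformations on the various $\mathbb{A}^1$- and $\mathbb{P}^1$-factors, and then to identify $i_0^\ast \widetilde V$ and $i_1^\ast \widetilde V$ with concrete cycles. Write $s$ for the coordinate on the ``modulus $\mathbb{A}^1$'' of $\mathscr X \otimes \mathbb{A}^1$, $(s_1,s_2)$ for the coordinates after $\mu^\ast$ (with $s = s_1 s_2$), $u$ for the $\mathbb{P}^1$-coordinate appearing after $\lambda$, $j$, and $F$, and $v$ for the new cube coordinate produced by $\tau$. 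Since the M\"obius transformation $\tau$ satisfies $\tau(0) = \infty$ and $\tau(1) = 0$, pulling back by $\tau$ identifies $i_0^\ast \widetilde V$ with $V_3|_{u = \infty}$ and $i_1^\ast \widetilde V$ with $V_3|_{u = 0}$; the task is therefore to evaluate these two restrictions as cycles on $\mathscr X^\circ \times \mathbb{A}^1 \times \mathbb{A}^q$.

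The two computations will both use the projection formula for the finite flat morphism $F$. Denote by $\{\alpha_i\}$ the roots of $f(u)=1+u^mg(u)$ with multiplicities $\{n_i\}$. Since $f(0)=1$ and $f(\alpha_i)=0$, all $\alpha_i$ are nonzero and lie in $\mathbb{P}^1 \setminus F^\ast\{1\}$; as Cartier divisors on $\mathbb{P}^1$, $F^\ast\{0\}=\sum_i n_i\{\alpha_i\}$ and $F^\ast\{\infty\}=d\{\infty\}$ with $d=\deg f$. A direct coordinate computation shows that $V_2$ is cut out by the relation $(x,s_1/u,a)\in V$, hence $V_2|_{u=\alpha_i}=\alpha_i\cdot V$ (the Witt-ring action) and $V_2|_{u=\infty}=\mathrm{pr}_1^\ast(i_0^\ast V)$. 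The projection formula then gives
\[
V_3|_{u=0}=F_\ast\bigl(j^\ast V_2\cdot F^\ast\{0\}\bigr)=\sum_i n_i(\alpha_i\cdot V)=\mathrm{div}(f)\cdot V,
\]
proving (2), and
\[
V_3|_{u=\infty}=F_\ast\bigl(j^\ast V_2\cdot F^\ast\{\infty\}\bigr)=d\cdot F_\ast\bigl(j^\ast V_2|_{u=\infty}\bigr)=d\cdot\mathrm{pr}_1^\ast(i_0^\ast V),
\]
proving (1); here the final equality uses that $F$ restricts to the identity on the reduced closed point $\{\infty\}$. Extending linearly from an integral component to the full cycle $Z$ then yields both claims.

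The main technical obstacle will be to justify these cycle-theoretic restrictions at $u=0$ and $u=\infty$ on the nose: one must verify that every component of $j^\ast V_2$ meets the fibres $F^{-1}(0)=\sum_i n_i\{\alpha_i\}$ and $F^{-1}(\infty)=d\{\infty\}$ properly, so that the projection-formula identifications above are equalities of cycles rather than equalities up to corrections of the wrong codimension. For the fibres over the $\alpha_i$, this follows from the flatness of $\mu\circ(\mathrm{id}\times\lambda^{-1})$, which renders $V_2$ equidimensional over the $u$-axis. For the fibre over $\infty$, the relevant proper intersection is exactly the assertion that $i_0^\ast V$ is well-defined as a cycle, which holds because $V$ lies in $z^r_w(\mathscr X\otimes\mathbb{A}^1,q,A)$ with $w=\{\mathscr X^\circ\times\{0\}\}$. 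Once these verifications are in place, the explicit formulae above give the claim.
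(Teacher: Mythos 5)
Your proposal is correct and follows essentially the same route as the paper, resting in both cases on the base-change/projection formula for the finite flat map $F$ applied to $V_3 = F_\ast j^\ast V_2$ together with the coordinate description of $V_2$ as cut out by $(x,s/u,a)\in V$. The small lemma the paper isolates for the $u=0$ fiber (namely $\mu_\ast(V\times y)=\pi_\ast\, i^\ast(\mathrm{id}\times\mu)^\ast V$, proved via $(\mathrm{mult}_y)_\ast=(\mathrm{mult}_{y^{-1}})^\ast$) is exactly your fiberwise identification $V_2|_{u=\alpha_i}=\alpha_i\cdot V$, once the pushforward along the residue-field extension is absorbed into $F_\ast$, so the two arguments differ only in packaging.
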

Clearly, we may assume that $Z=V$ is integral.
The assertion (1) follows from the calculation:
\begin{align*}
i_0^\ast \widetilde{V} &= i_\infty^\ast F_\ast j^\ast  V_2
=  \deg f \cdot i_\infty^\ast  j^\ast V_2 = \deg f \cdot i_\infty^\ast  V_2 =  \deg f \cdot i_0^\ast V_1 \\
&= \deg f \cdot i_0^\ast (\mathrm{id}_{\mathscr{X}^\circ } \times \mu )^\ast V = \deg f \cdot \mathrm{pr}_1^\ast (i_0^\ast V).
\end{align*}
 
In the following, we prove (2).
First, we note that $i_1^\ast \widetilde{V} = i_0^\ast F_\ast j^\ast  V_2$.
Consider a cartesian diagram
\[\begin{xymatrix}{
\mathrm{div}(f) \ar[r]^(0.4){i'_0} \ar[d]_{\pi } & \mathbb{P}^1 \setminus F^{-1} (1) \ar[d]^{F} \\
\{0\} \ar[r]_(0.45){i_0 } & \mathbb{P}^1 \setminus \{1\}
}\end{xymatrix}\]
where $\pi$ denotes the structure morphism.
By projection formula, we obtain 
\[i_0^\ast F_\ast j^\ast  V_2 = \pi_\ast i^{\prime \ast }_0 j^\ast V_2 = \pi_\ast i^{\prime \ast }_0 V_2 = \pi_\ast i^{\prime \ast }_0 \lambda^\ast (\mathrm{id}_{\mathscr{X^\circ }} \times \mu)^\ast V.\]
So, it suffices to prove that $\mathrm{div}(f) \cdot V \stackrel{\text{by def.}}{=} \mu_\ast (V \times \mathrm{div}(f)) = \pi_\ast i^{\prime \ast }_0 \lambda^\ast (\mathrm{id}_{\mathscr{X^\circ }} \times \mu)^\ast V$.
By linearity, we are reduced to proving the following lemma.
\begin{lem}
Let $y \in \mathbb{A}^1 \setminus \{0\}$ be a closed point, and set $y^{-1} := \lambda^{-1} y$.
Let $i: y^{-1} \to \mathbb{A}^1$ be the inclusion and let $\pi  : \mathrm{Spec}(k(y)) = \mathrm{Spec}(k(y^{-1}))\to \mathrm{Spec}(k)$ be the structure morphism.
Then, we have \[\mu_\ast (V \times y)=\pi_\ast i^\ast (\mathrm{id}_{\mathscr{X^\circ }} \times \mu)^\ast V.\]
\end{lem}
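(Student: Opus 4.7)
The plan is to show that both sides of the asserted equality are equal to $\pi_* \phi^* V$, where
\[
\phi \colon (\mathscr{X}^\circ \times \mathbb{A}^1 \times \mathbb{A}^q)_{k(y)} \longrightarrow \mathscr{X}^\circ \times \mathbb{A}^1 \times \mathbb{A}^q, \qquad (x,t,z) \longmapsto (x, t y^{-1}, z),
\]
is the $k$-morphism obtained by multiplying the $\mathbb{A}^1$-coordinate by $y^{-1} \in k(y)^\times$ and then composing with the structure morphism $\pi$. By linearity in $V$ both sides split over components, so I may assume $V$ is integral.

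First, I will unpack the right-hand side, which is essentially a chase of definitions. By construction $(\mathrm{id}_{\mathscr{X}^\circ} \times \mu)^* V$ on $\mathscr{X}^\circ \times \mathbb{A}^1 \times \mathbb{A}^1 \times \mathbb{A}^q$ is the pullback of $V$ along $(x,t,u,z) \mapsto (x, tu, z)$; restricting via $i\colon \{y^{-1}\} \hookrightarrow \mathbb{A}^1$ on the second $\mathbb{A}^1$-factor substitutes $u = y^{-1}$, and identifying $(\mathscr{X}^\circ \times \mathbb{A}^1 \times \mathbb{A}^q)_{k(y^{-1})} = (\mathscr{X}^\circ \times \mathbb{A}^1 \times \mathbb{A}^q)_{k(y)}$ realises this cycle as $\phi^* V$. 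Applying $\pi_*$ yields the right-hand side as $\pi_* \phi^* V$.

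For the left-hand side, $V \times y$ is naturally the base change $V_{k(y)}$ on $(\mathscr{X}^\circ \times \mathbb{A}^1 \times \mathbb{A}^q)_{k(y)}$, and unwinding the definition of $\alpha \cdot V$ shows that $\mu_*(V \times y)$ is the pushforward along the $k$-morphism $(x,t,z) \mapsto (x, ty, z)$, which factors as $\pi \circ M_y$, where $M_y$ is the $k(y)$-self-isomorphism of $(\mathscr{X}^\circ \times \mathbb{A}^1 \times \mathbb{A}^q)_{k(y)}$ multiplying $t$ by $y$. Since $M_y$ is an isomorphism, its pushforward on cycles agrees with pullback along $M_y^{-1} = M_{y^{-1}}$, and $(M_{y^{-1}})^* V_{k(y)} = \phi^* V$; hence the left-hand side also equals $\pi_* \phi^* V$, as required.

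The only real obstacle is careful bookkeeping of the factor orderings and of the two distinct $\mathbb{A}^1$-factors—the one inside the ambient scheme of $V$ and the one attached to the zero-cycle $y$—which play symmetric but distinct roles under $\mu$. The one cycle-theoretic input needed is the elementary identity $f_* = (f^{-1})^*$ for an isomorphism $f$, which is precisely what converts the pushforward-shaped left-hand side into a pullback matching the right-hand side.
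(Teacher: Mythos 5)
Your proof is correct and takes essentially the same approach as the paper: both rewrite $\mu_\ast(V\times y)$ as $\pi_\ast(\mathrm{mult}_y)_\ast(\pi^\ast V)$ and $\pi_\ast i^\ast(\mathrm{id}\times\mu)^\ast V$ as $\pi_\ast(\mathrm{mult}_{y^{-1}})^\ast(\pi^\ast V)$, and conclude via the identity $(\mathrm{mult}_y)_\ast = (\mathrm{mult}_{y^{-1}})^\ast$ for inverse isomorphisms.
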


\begin{proof}
Let $\mathrm{mult}_{y}$ and $\mathrm{mult}_{y^{-1}}$ denote the automorphisms of $\mathbb{A}^1_{k(y)}$ obtained by multiplying $y$ and $y^{-1}$, respectively.
Then, the left hand side can be rewritten as $\pi_\ast (\mathrm{mult}_y )_\ast (\pi^\ast V )$.
On the other hand, the right hand side can be rewritten as $\pi_\ast  (\mathrm{mult}_{y^{-1}} )^\ast (\pi^\ast V)$.
Since $\mathrm{mult}_{y}$ and $\mathrm{mult}_{y^{-1}}$ are inverse to each other, we have $(\mathrm{mult}_y )_\ast = (\mathrm{mult}_{y^{-1}} )^\ast$. This finishes the proof.
 \end{proof}
Thus, we finished the proof of Proposition \ref{vanishing}.
 \end{proof}

Now, we are ready to prove the theorem.
Let $f \in \mathbb{W}(k) = 1+uk\llbracket u \rrbracket$ be a formal power series and let $V \in \mathrm{NCH}^r (\mathscr{X},q,A)$ be an element.
By the lemma, there exists a positive integer $m$ such that $\mathrm{div} (1+u^{m+1} k[u]) \cdot V = 0$ in $\mathrm{NCH}^r (\mathscr{X},q,A)$.
Let $f_m $ denote the image of $f$ under the quotient map $\mathbb{W}(k) \to \mathbb{W}_{m+1} (k) = (1+uk[u])/(1+u^{m+1} k[u])$.
Then, we define 
\[
f \cdot V := \mathrm{div}(f_m ) \cdot V,
\]
where the right hand side is well-defined since $\mathrm{div} (1+u^{m+1} k[u]) \cdot V = 0$.
Moreover, it is independent of the choice of $m$; let $m'$ be another integer with $\mathrm{div} (1+u^{m'+1} k[u]) \cdot V = 0$.
Then, we have $f_m \cdot V = f_{m'} \cdot V$ because $f_m$ and $f_{m'}$ have a common lift in $1+uk[u]$.
Since the association $(f,V) \mapsto f\cdot V$ is obviously bilinear, we obtain a homomorphism of abelian groups
\[
\mathbb{W}(k) \otimes \mathrm{NCH}^r (\mathscr{X},q,A) \to \mathrm{NCH}^r (\mathscr{X},q,A).
\]
Again by lemma, this action is continuous; for any element $V \in \mathrm{NCH}^r (\mathscr{X},q,A)$, take an integer $m$ such that $\mathrm{div}(1+u^{m+1} k[u]) \cdot V=0$.
Then, by definition of the action of $\mathbb{W}(k)$ on $\mathrm{NCH}^r (\mathscr{X},q,A)$, we have $(1+u^{m+1} \cdot k\llbracket u \rrbracket) \cdot V = 1\cdot V= 0$.
In fact, this action gives a {\it module structure}. 
Indeed, the unit acts as the identity since $(1-t) \cdot V = \mathrm{div}(1-t) \cdot V = \{1\} \cdot V = V$.
To check associativity, note that any element of $1+uk\llbracket u \rrbracket$ is uniquely written as a product $\prod_{n \geq 1} (1-a_n u^n )$ with $a_n \in k$.
By continuity of the action, it suffices to prove that $((1-au^m ) \star (1-bu^n )) \cdot V = (1-au^m ) \cdot ((1-b u^n ) \cdot V)$, where $\star$ denotes the product of the big Witt ring $\mathbb{W}(k)$.
If one of $a,b$ is zero, then the both sides of the equation are zero, hence the assertion is trivial. So, we assume that $a \neq 0$ and $b\neq 0$.
Setting $d = \mathrm{g.c.d}(m,n)$, and let $\alpha$ and $\beta $ be solutions of the equations $t^m = a^{-1}$ and $t^n = b^{-1}$, respectively.
Moreover, let $\zeta_l$ denote the $l$-th root of unity for any integer $l > 0$.
Then, we have
\begin{align*}
&\hspace{4.5mm}\mathrm{div}((1-au^m ) \star (1-bu^n )) \\
&= \mathrm{div}((1-a^{n/d} b^{m/d} t^{mn/d})^d ) 
=d \cdot \mathrm{div} (1-a^{n/d} b^{m/d} t^{mn/d}) \\
&=d \cdot \sum_{l=0}^{mn/d -1} \zeta_{mn/d}^l \alpha \beta 
=\mu_\ast \left ( \sum_{i=0}^{m-1} \zeta_m^i \alpha , \sum_{j=0}^{n-1} \zeta_n^j \beta \right) \\
&=\mu_\ast (\mathrm{div}(1-at^m ) , \mathrm{div}(1-bt^n )).
\end{align*}
This proves the associativity, and finishes the proof of Theorem \ref{Witt-action}.

\subsection{A vanishing theorem and homotopy invariance}

The module structure defined in the previous subsection implies the following vanishing theorem.

\begin{thm}\label{vanishing-of-NK}
Let $k$ be a field, and let $A$ be an abelian group.
\begin{itemize}
\item[$\mathrm{(i)}$]
Assume that $\mathrm{char}(k)=p>0$.
Then, for any modulus pair $\mathscr{X}$ over $k$ and for any integers $r$ and $q$, the nilpotent higher Chow group $\mathrm{NCH}^r (\mathscr{X},q,A)$ is a $p$-group.
In particular, if we assume moreover that $A$ is a $\mathbb{Z}[1/p]$-module, then $\mathrm{NCH}^r (\mathscr{X},q,A)=0$.
\item[$\mathrm{(ii)}$]
Assume that $\mathrm{char}(k)=0$.
Then, for any modulus pair $\mathscr{X}$ over $k$ and for any integers $r$ and $q$, the nilpotent higher Chow group $\mathrm{NCH}^r (\mathscr{X},q,A)$ is a $k$-vector space.
In particular, if we assume moreover that $A$ is a torsion abelian group, then $\mathrm{NCH}^r (\mathscr{X},q,A)=0$.
\end{itemize}
\end{thm}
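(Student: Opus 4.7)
The plan is to exploit the continuous $\mathbb{W}(k)$-module structure on $\mathrm{NCH}^r(\mathscr{X},q,A)$ supplied by Theorem \ref{Witt-action}. Since the additive identity of $\mathbb{W}(k) = (1+uk\llbracket u\rrbracket)^\times$ is the polynomial $1-u$, multiplication by any positive integer $n$ in $\mathbb{W}(k)$ is represented by the power series $(1-u)^n$.

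For part (i), assume $\mathrm{char}(k)=p>0$. The Frobenius identity gives $(1-u)^p = 1 - u^p$ in $k[u]$, and iterating yields $(1-u)^{p^N} = 1 - u^{p^N}$ for every $N \geq 1$. Given any $V \in \mathrm{NCH}^r(\mathscr{X},q,A)$, the continuity statement in Theorem \ref{Witt-action} produces $m \geq 1$ with $(1 + u^m k\llbracket u\rrbracket) \cdot V = 0$; choosing $N$ with $p^N \geq m$ yields $p^N V = (1-u^{p^N}) \cdot V = 0$. Hence every element of $\mathrm{NCH}^r(\mathscr{X},q,A)$ is killed by a power of $p$, making the group a $p$-group. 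Since $\mathrm{NCH}^r(\mathscr{X},q,A)$ naturally inherits a $\mathbb{Z}[1/p]$-module structure whenever $A$ is one (the cycle complex with coefficients being defined by termwise tensoring with $A$), $p$ acts invertibly in that case, and being simultaneously a $p$-group and $p$-divisible forces vanishing.

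For part (ii), assume $\mathrm{char}(k)=0$. First, $\mathbb{W}(k)$ is uniquely divisible: for $f = 1 + u g(u) \in \mathbb{W}(k)$ and $n \geq 1$, the binomial series $(1+ug)^{1/n} = \sum_{j \geq 0} \binom{1/n}{j} (ug)^j$ converges $u$-adically and furnishes a unique $n$-th root, so $\mathbb{W}(k)$ is a $\mathbb{Q}$-algebra. The ghost map $\mathrm{gh} = (w_n)_{n \geq 1} \colon \mathbb{W}(k) \to \prod_{n\geq 1} k$ is then a ring isomorphism---one checks this at each truncation $\mathbb{W}_m(k) \xrightarrow{\cong} k^m$ via Newton's identities, then passes to the projective limit. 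Composing its inverse with the diagonal embedding $k \hookrightarrow \prod_{n \geq 1} k$ produces a continuous ring homomorphism $k \to \mathbb{W}(k)$, exhibiting $\mathbb{W}(k)$ as a $k$-algebra in a manner compatible with the continuous topology. Through this map the $\mathbb{W}(k)$-module $\mathrm{NCH}^r(\mathscr{X},q,A)$ becomes a $k$-vector space. When $A$ is torsion, $\mathrm{NCH}^r(\mathscr{X},q,A)$ is also torsion (each cycle class involves only finitely many coefficients from $A$, killed by a common integer), and a torsion $k$-vector space in characteristic zero must vanish.

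The delicate point is to verify that the $k$-algebra structure on $\mathbb{W}(k)$ obtained via the ghost map interacts correctly with the truncation topology, so that the induced $k$-action on $\mathrm{NCH}^r(\mathscr{X},q,A)$ factors through some $\mathbb{W}_m(k)$ as required by the continuity hypothesis of Theorem \ref{Witt-action}. Concretely, one must check that the image of $k$ under the diagonal-plus-ghost-inverse construction sits compatibly in the projective system $(\mathbb{W}_m(k))_{m \geq 1}$, so that the resulting scalar action on any given element $V$ is genuinely computed at the finite level where the Witt action is defined.
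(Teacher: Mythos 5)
Your proof is correct and follows essentially the same route as the paper: the Frobenius identity $(1-u)^{p^N}=1-u^{p^N}$ combined with the continuity of the $\mathbb{W}(k)$-action in characteristic $p$, and the ghost isomorphism $\mathbb{W}(k)\cong\prod_{n\geq 1}k$ in characteristic $0$. The "delicate point" you flag at the end is not actually an issue: restriction of scalars along the ring homomorphism $k\to\mathbb{W}(k)$, $a\mapsto (1-u)^a$, automatically yields a well-defined $k$-vector space structure, since the $\mathbb{W}(k)$-action of Theorem \ref{Witt-action} is already defined on all of $\mathbb{W}(k)$ (continuity is only used to compute it at finite level).
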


\begin{proof}
Assume that $\mathrm{char}(k)=p>0$, and take any $V \in \mathrm{NCH}^r (\mathscr{X},q,A)$. It suffices to prove that the submodule $\mathbb{W}(k) \cdot V \subset \mathrm{NCH}^r (\mathscr{X},q,A)$ is $p$-primary torsion.
By continuity of the action, the submodule $\mathbb{W}(k) \cdot V$ is in fact a $\mathbb{W}_{p^m } (k)$-module for some $m\geq 0$.
Recalling that the unity of $\mathbb{W}_n (k)$ is the polynomial $1-t$ and that 
\[
(1-t)^{p^{m+1} } = 1 - t^{p^{m+1}} \in 1+t^{p^m +1 } k[t] ,
\]
we can see that $p^{m+1} \mathbb{W}_{p^m } (k) = 0$.
This implies that $p^{m+1} \mathbb{W}(k) \cdot V = 0$, proving $\mathrm{(i)}$.
To prove $\mathrm{(ii)}$, assume that $\mathrm{char}(k)=0$.
Then, $\mathbb{W}(k)$ is isomorphic to the product ring $\prod_{n\geq 1} k$ through the ghost operator.
This finishes the proof.
 \end{proof}

\begin{cor}\label{hom-inv-part} 
Assume one of the following conditions:
\begin{itemize}
\item[$\mathrm{(i)}$]
$\mathrm{char}(k)=p>0$ and $A$ is a $\mathbb{Z}[1/p]$-module, or
\item[$\mathrm{(ii)}$]
$\mathrm{char}(k)=0$ and $A$ is a torsion abelian group.
\end{itemize}
Then, 
for any modulus pair $\mathscr{X}$ over $k$, the pullback by the first projection induces an isomorphism
\[
\mathrm{CH}^r (\mathscr{X},q,A) \xrightarrow{\cong } \mathrm{CH}^r (\mathscr{X}\otimes \mathbb{A}^1 ,q,A).
\]
\end{cor}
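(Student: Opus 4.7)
The plan is to derive this corollary immediately by combining the two preceding results in this subsection: the canonical direct sum decomposition of Proposition \ref{nilp-split} and the vanishing theorem \ref{vanishing-of-NK}. First I would invoke Proposition \ref{nilp-split} to identify
\[
\mathrm{CH}^r(\mathscr{X} \otimes \mathbb{A}^1, q, A) \cong \mathrm{CH}^r(\mathscr{X}, q, A) \oplus \mathrm{NCH}^r(\mathscr{X}, q, A),
\]
where the inclusion of the first summand is precisely the pullback $\mathrm{pr}_1^\ast$ along the projection $\mathrm{pr}_1 : \mathscr{X} \otimes \mathbb{A}^1 \to \mathscr{X}$ (the splitting being provided by $i_0^\ast$, which is well-defined by Corollary \ref{special-moving-lemma}).

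Next I would apply Theorem \ref{vanishing-of-NK} to kill the second summand: in case (i), the group $\mathrm{NCH}^r(\mathscr{X}, q, A)$ is $p$-primary torsion, hence vanishes upon tensoring with (or being naturally a module over) the $\mathbb{Z}[1/p]$-module $A$; in case (ii), it is a $k$-vector space, and a $\mathbb{Q}$-vector space structure is incompatible with torsion coefficients, forcing the group to be zero. With $\mathrm{NCH}^r(\mathscr{X}, q, A) = 0$, the splitting of Proposition \ref{nilp-split} collapses to the desired isomorphism $\mathrm{pr}_1^\ast : \mathrm{CH}^r(\mathscr{X}, q, A) \xrightarrow{\cong} \mathrm{CH}^r(\mathscr{X} \otimes \mathbb{A}^1, q, A)$.

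There is no real obstacle in this final step: the substantive work (constructing the continuous $\mathbb{W}(k)$-module structure on $\mathrm{NCH}^r(\mathscr{X}, q, A)$ and deducing the vanishing from it) has already been completed in Theorems \ref{Witt-action} and \ref{vanishing-of-NK}. The corollary is a formal consequence, requiring only that one match the hypotheses on $(k, A)$ to the two cases of the vanishing theorem.
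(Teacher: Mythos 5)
Your proposal is correct and is exactly the paper's argument: the paper's proof reads ``Obvious by the above theorem and Proposition \ref{nilp-split},'' i.e.\ it combines the splitting $\mathrm{CH}^r(\mathscr{X}\otimes\mathbb{A}^1,q,A)\cong\mathrm{CH}^r(\mathscr{X},q,A)\oplus\mathrm{NCH}^r(\mathscr{X},q,A)$ with the vanishing of the nilpotent summand from Theorem \ref{vanishing-of-NK} under hypotheses (i) or (ii). You have merely spelled out the details the paper leaves implicit.
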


\begin{proof}
Obvious by the above theorem and Proposition \ref{nilp-split} .
 \end{proof}

\begin{rem}\label{direct-proof}
We can prove the assertion (i) of Corollary \ref{hom-inv-part} by directly using Bloch's technique which was used in the proof of $\mathbb{A}^1$-homotopy invariance of the higher Chow group as in \cite[Theorem 2.1]{B} (see also Section \ref{pf-of-inv}).
Let us give a brief sketch below.
Since we have the moving lemma \ref{special-moving-lemma}, it suffices to prove the rigidity lemma, which states that the pullback maps $i_0^\ast , i_1^\ast : z^r_w (\mathscr{X}\otimes \mathbb{A}^1 ,\ast ,A) \to z^r (\mathscr{X},\ast ,A)$ are homotopic, where $w=\{\mathscr{X}^\circ \times \{0,1\}\}$.
By Lemma \ref{colimit-lemma}, we have  $z^r_w (\mathscr{X}\otimes \mathbb{A}^1 ,q ,A) = \bigcup_{m \geq 1} z^r_w (\mathscr{X}\otimes \overline{\square}^{(-p^m )} ,q ,A)$ for each $q$.
So, we are reduced to proving that the maps $i_0^\ast , i_1^\ast : z^r_w (\mathscr{X}\otimes  \overline{\square}^{(-p^m )} ,\ast ,A) \to z^r (\mathscr{X},\ast ,A)$ are homotopic for any $m$.
Consider the finite morphism $\rho : \mathbb{A}^1 \to \mathbb{A}^1$ which sends the parameter $t$ to $t^{p^m}$. 
Then, we have a commutative diagram
\[\begin{xymatrix}{
z^r_w (\mathscr{X} \otimes \overline{\square}^{(-m)} ,\ast ,A) \ar[rr]^{(\mathrm{id}_{\mathscr{X}^\circ } \times \rho )_\ast } \ar[rd]_{p^m \cdot i_\epsilon^\ast } && z^r_w (\mathscr{X} \otimes \overline{\square}^{(-1)} ,\ast ,A) \ar[ld]^{i_\epsilon^\ast } \\
& z^r (\mathscr{X} ,\ast ,A) &
}\end{xymatrix}\]
for each $\epsilon = 0,1$.
The horizontal map is well-defined since $\rho^\ast \{\infty \} = p^m \{\infty \}$.
The commutativity is trivial when $\epsilon = 0$.
When $\epsilon = 1$, it follows from the equation $(t-1)^{p^m } = t^{p^m }-1$.
Then, by Lemma \ref{rigidity}, we can see that $p^m i_0^\ast $ and $p^m i_1^\ast$ are homotopic.
Since $A$ is a $\mathbb{Z}[1/p]$-module by assumption, the multiplication by $p^m$ is invertible on $z^r (\mathscr{X},\ast ,A)$.
This proves the desired assertion.
\end{rem}

\section{Independence theorem}\label{section-indep}


In this section, we obtain Theorem \ref{main-4} from the following result:

\begin{thm}\label{comparison-general}
Let $k$ be a field and let $A$ be an abelian group.
Assume one of the following conditions:
\begin{itemize}
\item[$\mathrm{(i)}$]
$\mathrm{char}(k)=p>0$ and $A$ is a $\mathbb{Z}[1/p]$-module, or
\item[$\mathrm{(ii)}$]
$\mathrm{char}(k)=0$ and $A$ is a torsion abelian group.
\end{itemize}
Then, for any modulus pair $\mathscr{X}$ over $k$ and for any integers $r,q$, there exists a canonical isomorphism
\[
\mathrm{CH}^r (\mathscr{X},q ,A)  \cong \mathrm{CH}^r (\mathscr{X} ,q,A)' ,
\]
where the right hand side denotes the na\"{i}ve higher Chow group with modulus (see Definition \ref{variant-higher-chow-mod}).
\end{thm}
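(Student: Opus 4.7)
The plan is to show that the natural inclusion $\iota\colon z^r(\mathscr{X},\ast,A) \hookrightarrow z^r(\mathscr{X},\ast,A)'$ is a quasi-isomorphism; this inclusion exists because restricting the regular modulus inequality $\nu_V^\ast(X^\infty\times(\mathbb{P}^1)^q) \leq \nu_V^\ast(\overline{X}\times F_q)$ to the open preimage of $\overline{X}\times\mathbb{A}^q$ (where the right-hand side restricts to $0$) recovers exactly the na\"{i}ve condition on the closure of $V$ in $\overline{X}\times\mathbb{A}^q$.

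First I would observe that Corollary \ref{hom-inv-part}, whose proof applies verbatim to the na\"{i}ve version (the constructions of Theorems \ref{Witt-action} and \ref{vanishing-of-NK} were already carried out in parallel for both complexes, as both admit the Witt-ring action of \S 5), gives that the pullback along $\mathrm{pr}_1\colon \mathscr{X}\otimes\mathbb{A}^1\to\mathscr{X}$ induces isomorphisms $\mathrm{CH}^r(\mathscr{X},q,A)\cong\mathrm{CH}^r(\mathscr{X}\otimes\mathbb{A}^1,q,A)$ and $\mathrm{CH}^r(\mathscr{X},q,A)'\cong\mathrm{CH}^r(\mathscr{X}\otimes\mathbb{A}^1,q,A)'$. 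By naturality of $\iota$, the map $\iota_\ast$ is an isomorphism at the level of $\mathscr{X}$ if and only if it is at the level of $\mathscr{X}\otimes\mathbb{A}^1$. Next I would apply Lemma \ref{colimit-lemma} to $\mathscr{Y}=\mathscr{X}\otimes(\mathbb{P}^1,\emptyset)$ and $E=\overline{X}\times\{\infty\}$ (whose supports share no components with $Y^\infty=X^\infty\times\mathbb{P}^1$), together with its cosmetic modification for the na\"{i}ve complex (replacing $(\mathbb{P}^1)^q,F_q$ by $\mathbb{A}^q,0$ throughout), to obtain the filtrations
\[
z^r(\mathscr{X}\otimes\mathbb{A}^1,\ast,A) = \bigcup_{m\geq 1} z^r(\mathscr{X}\otimes\overline{\square}^{(-m)},\ast,A), \qquad z^r(\mathscr{X}\otimes\mathbb{A}^1,\ast,A)' = \bigcup_{m\geq 1} z^r(\mathscr{X}\otimes\overline{\square}^{(-m)},\ast,A)',
\]
with $\iota$ preserving the level. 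Since filtered colimits are exact, it then suffices to show $\iota_\ast$ is an isomorphism at each level $\mathscr{X}\otimes\overline{\square}^{(-m)}$.

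The main obstacle is this last step: constructing, for each $m$, a homological inverse to $\iota_\ast$ on the non-effective modulus pair $\mathscr{X}\otimes\overline{\square}^{(-m)}$. My plan is to adapt the construction from the proof of Proposition \ref{vanishing}. Given a na\"{i}ve representative $V$, I would pull it back along the multiplication $\mathrm{id}\times\mu$ and the inversion $\lambda\colon\mathbb{A}^1\xrightarrow{\cong}\mathbb{P}^1\setminus\{0\}$, then push forward along a finite cover $F\colon\mathbb{P}^1\to\mathbb{P}^1$ of the shape $[U_0:U_1]\mapsto[U_0^d : U_0^d f(U_1/U_0)]$ for some polynomial $f\in 1+u^m k[u]$, and invoke the Archimedean property (Lemma \ref{Archimedean-property}) to guarantee that, for $d$ sufficiently large, the resulting cycle satisfies the regular modulus condition in the extra $\overline{\square}^{(-1)}$ factor. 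Rigidity (Lemma \ref{rigidity}) then promotes the two face restrictions of this construction into a chain homotopy between $\iota_\ast \circ s$ and multiplication by $\deg F$; since $\deg F$ can be arranged to be invertible in $A$ under the hypotheses (either a power of $p$ when $A$ is $\mathbb{Z}[1/p]$-module, or any positive integer when $A$ is torsion over $\mathbb{Q}$), we obtain the desired inverse of $\iota_\ast$ on homology.
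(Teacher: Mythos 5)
There is a genuine gap, and the approach diverges substantially from the paper's.

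The paper proves this theorem by a double-complex/spectral-sequence argument modeled on Bloch's comparison of cubical and simplicial higher Chow groups (\cite[Theorem 4.3]{B2}): one builds a double complex $\mathcal{Z}(a,b)$ interpolating between the two modulus conditions (the full condition in the first $a$ coordinates, the na\"{i}ve one in the last $b$), passes to the normalized subcomplex $\mathcal{Z}(\ast,\ast)_0$, and shows via the moving lemma together with $\mathbb{A}^1$-homotopy invariance (regular version) in one direction and iterated cube invariance (na\"{i}ve version, unconditional) in the other that each of the two spectral sequences degenerates, identifying the homology of the total complex with $\mathrm{CH}^r(\mathscr{X},\ast,A)$ on one side and $\mathrm{CH}^r(\mathscr{X},\ast,A)'$ on the other. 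No direct map between $z^r(\mathscr{X},\ast,A)$ and $z^r(\mathscr{X},\ast,A)'$ is inverted; the isomorphism is mediated entirely by the auxiliary total complex.

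Your approach instead tries to invert the inclusion $\iota$ directly, and I see two concrete problems. First, the reduction does not terminate: after using $\mathbb{A}^1$-homotopy invariance and Lemma \ref{colimit-lemma} to pass to the levels $\mathscr{X}\otimes\overline{\square}^{(-m)}$, you are facing exactly the same comparison problem for the new modulus pair $\mathscr{X}\otimes\overline{\square}^{(-m)}$ as you started with for $\mathscr{X}$. Nothing about these pairs makes $\iota$ easier to invert, so the filtration buys you nothing. Second and more seriously, the construction of the claimed homological inverse $s$ is not actually specified. The $(\mu,\lambda,F,\tau)$ gadget from Proposition \ref{vanishing} is a homotopy \emph{within a single} complex: it takes a cycle on $\mathscr{X}^\circ\times\mathbb{A}^1\times\mathbb{A}^q$ to a cycle one degree up on $\mathscr{X}^\circ\times\mathbb{A}^1\times\mathbb{A}^{q+1}$ whose two face restrictions are $\deg f\cdot\mathrm{pr}_1^\ast(i_0^\ast V)$ and $\mathrm{div}(f)\cdot V$, showing that the Witt class $\mathrm{div}(f)$ kills elements of the nilpotent group. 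It does not convert a cycle satisfying the na\"{i}ve condition $(\star)'$ into one satisfying the full condition $(\star)$; the face coordinates $\mathbb{A}^q$ are untouched throughout, while the difference between the two conditions lives precisely in the face directions. So the candidate $s$ does not land in the regular complex, and the asserted homotopy $\iota_\ast\circ s\simeq\deg F\cdot\mathrm{id}$ has no foundation (nor would it suffice: you would also need $s\circ\iota_\ast\simeq\deg F\cdot\mathrm{id}$). Finally, the claim that the Witt-module theorems are ``already carried out in parallel for both complexes'' is not accurate --- Section 5 constructs the $\mathbb{W}(k)$-action only on the regular nilpotent group; the paper's proof avoids needing a na\"{i}ve $\mathbb{A}^1$-homotopy invariance by replacing it with (unconditional) na\"{i}ve cube invariance on one side of the double complex.
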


If we consider effective modulus pairs, we obtain the following corollary by Remark \ref{effective-case}.

\begin{cor}\label{comparison-reduced}
($\Longrightarrow$ Theorem \ref{main-4})
Let $k,\mathscr{X},A$ be as in Theorem \ref{comparison-general}.
Assume moreover that $\mathscr{X}=(\overline{X},X^\infty )$ is effective.
Then, for any $r,q \geq 0$, 
the higher Chow group $\mathrm{CH}^r (\mathscr{X},q,A)$ is independent of the multiplicity of the Cartier divisor $X^\infty$.
In other words, if $X^{\prime \infty }$ is another effective Cartier divisor on $\overline{X}$ such that $|X^{\prime \infty }| = |X^\infty |$ and if we set $\mathscr{X}':=(\overline{X},X^{\prime \infty })$, then we have a canonical isomorphism 
\[
\mathrm{CH}^r (\mathscr{X},q,A)
\cong
\mathrm{CH}^r (\mathscr{X}' ,q,A).
\]
\end{cor}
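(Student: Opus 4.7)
The plan is to deduce the corollary directly from Theorem \ref{comparison-general} together with Remark \ref{effective-case}. The strategy is to show that both sides of the desired isomorphism are canonically identified, via the theorem, with a common third group — the na\"{i}ve higher Chow group — which a priori depends only on $|X^\infty|$ rather than on the multiplicity.

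First I would observe that the interiors agree: since $|X^\infty| = |X^{\prime \infty}|$, we have $\mathscr{X}^\circ = \overline{X} \setminus |X^\infty| = \overline{X} \setminus |X^{\prime \infty}| = \mathscr{X}^{\prime \circ}$, so $\underline{z}^r(\mathscr{X}^\circ, q) = \underline{z}^r(\mathscr{X}^{\prime \circ}, q)$, and likewise with coefficients in $A$. Next, by Remark \ref{effective-case}, in the effective case the na\"{i}ve modulus condition $(\star)'$ on an integral cycle $V \subset \mathscr{X}^\circ \times \square^q$ is equivalent to the condition that the closure $\overline{V} \subset \overline{X} \times \mathbb{A}^q$ be disjoint from $|X^\infty| \times \mathbb{A}^q$. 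Since this purely set-theoretic condition involves only the support $|X^\infty| = |X^{\prime \infty}|$ and not the multiplicity, the subgroups $\underline{z}^r(\mathscr{X}, q, A)'$ and $\underline{z}^r(\mathscr{X}', q, A)'$ coincide as subgroups of the common ambient group $\underline{z}^r(\mathscr{X}^\circ, q, A)$. The same identification respects the cubical structure maps, hence gives an equality of complexes $z^r(\mathscr{X}, \ast, A)' = z^r(\mathscr{X}', \ast, A)'$, and therefore a tautological equality
\[
\mathrm{CH}^r(\mathscr{X}, q, A)' = \mathrm{CH}^r(\mathscr{X}', q, A)'.
\]

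Finally, I would invoke Theorem \ref{comparison-general} twice — once for $\mathscr{X}$ and once for $\mathscr{X}'$ — which is permissible under either hypothesis (i) or (ii) on $(k, A)$, to obtain canonical isomorphisms $\mathrm{CH}^r(\mathscr{X}, q, A) \cong \mathrm{CH}^r(\mathscr{X}, q, A)'$ and $\mathrm{CH}^r(\mathscr{X}', q, A) \cong \mathrm{CH}^r(\mathscr{X}', q, A)'$. Composing these with the equality above produces the required canonical isomorphism $\mathrm{CH}^r(\mathscr{X}, q, A) \cong \mathrm{CH}^r(\mathscr{X}', q, A)$.

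There is no real obstacle here; the entire content of the corollary is packed into Theorem \ref{comparison-general} and Remark \ref{effective-case}. The only mild subtlety worth flagging is verifying that the isomorphism is genuinely canonical — that is, independent of any auxiliary choice — which follows because both applications of Theorem \ref{comparison-general} come from the same natural inclusion of cycle complexes $z^r(-, \ast, A) \subset z^r(-, \ast, A)'$, so the resulting isomorphism between $\mathrm{CH}^r(\mathscr{X}, q, A)$ and $\mathrm{CH}^r(\mathscr{X}', q, A)$ is determined entirely by the equality of na\"{i}ve complexes, which in turn reflects the equality of supports.
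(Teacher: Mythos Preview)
Your proof is correct and follows exactly the route the paper intends: the corollary is stated as an immediate consequence of Theorem~\ref{comparison-general} together with Remark~\ref{effective-case}, and your argument spells out precisely the chain $z^r(\mathscr{X},\ast,A) \hookrightarrow z^r(\mathscr{X},\ast,A)' = z^r(\mathscr{X}',\ast,A)' \hookleftarrow z^r(\mathscr{X}',\ast,A)$ that the paper later makes explicit in the proof of Theorem~\ref{comparison-reduced-coh}.
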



\begin{proof}[Proof of Theorem \ref{comparison-general}]
We imitate Bloch's argument in \cite[Theorem 4.3]{B2}, which was used in the proof of the comparison isomorphism between cubical higher Chow group and simplicial higher Chow group.
Define a an abelian group $\mathcal{Z}(a,b)$ for any $a,b \in \mathbb{Z}$ by
\[
\mathcal{Z}(a,b) := \mathbb{Z} \Set{\parbox{85mm}{irreducible closed subsets $V \subset \mathscr{X}^\circ \times \mathbb{A}^{a+b}$ of codimension $r$ meeting $\mathscr{X}^\circ \times F$ properly for any face $F \subset \mathbb{A}^{a+b}$ and satisfying the modulus condition $(\star \star )$}} \otimes_{\mathbb{Z}} A,
\]
where the modulus condition is as follows:
\begin{itemize}
\item[$(\star \star )$]  Let $\overline{V} \subset \overline{X} \times (\mathbb{P}^1 )^{a} \times \mathbb{A}^b$ be the closure of $V$ and let $\overline{V}^N \to \overline{V}$ be the normalization.
Then, the following inequality of Cartier divisors on $\overline{V}^N$ holds:
\[
X^\infty \times (\mathbb{P}^1 )^{a} \times \mathbb{A}^b \leq \overline{X} \times F_a \times \mathbb{A}^b .
\]
\end{itemize}
By the same argument as Proposition \ref{chow-cubical}, we obtain a bifunctor
\[
\mathcal{Z}(\underline{\ast },\underline{\ast }) : \mathbf{Cube}^\mathrm{op} \times \mathbf{Cube}^\mathrm{op} \to 
\mathbf{Ab}.
\]
Let $\mathcal{Z}(\ast ,\ast )$ be the associated double complex.
Note that the differentials 
\begin{align*}
d' : \mathcal{Z}(a,\ast  ) \to \mathcal{Z}(a-1 ,\ast  ), \ \ 
d'' : \mathcal{Z}(\ast  ,b) \to \mathcal{Z}(\ast ,b-1),
\end{align*}
which are defined as the appropriate alternating sums of the maps induced by the pullback of cycles along the inclusions $\mathbb{A}^{a-1} \hookrightarrow \mathbb{A}^{a} $ and $\mathbb{A}^{b-1} \hookrightarrow \mathbb{A}^{b}$ (which are well-defined by the containment lemma) also form maps of cubical groups
$d' : \mathcal{Z}(a,\underline{\ast } ) \to \mathcal{Z}(a-1 ,\underline{\ast })$ and
$d'' : \mathcal{Z}(\underline{\ast } ,b) \to \mathcal{Z}(\underline{\ast },b-1)$.
We also define double subcomplexes
\[
\mathcal{Z}(\ast ,\ast )_l, \mathcal{Z}(\ast ,\ast )_r \subset \mathcal{Z}(\ast, \ast )
\]
by 
\begin{align*}
\mathcal{Z}(a,b)_l &:= \Set{V \in \mathcal{Z}(a,b) | \parbox{70mm}{$V$ vanishes on $\{x_i = 0\}$ for each $i=1,\dots ,a$, where $(x_1 ,\dots ,x_a )$ is the coordinate on $\mathbb{A}^a $}}, \\
\mathcal{Z}(a,b)_r &:= \Set{V \in \mathcal{Z}(a,b) | \parbox{70mm}{$V$ vanishes on $\{y_j = 0\}$ for each $j=1,\dots ,b$, where $(y_1 ,\dots ,y_b )$ is the coordinate on $\mathbb{A}^b $}}.
\end{align*}
Furthermore, we set
\[
\mathcal{Z}(\ast ,\ast )_0 := \mathcal{Z}(\ast ,\ast )_l \cap \mathcal{Z}(\ast ,\ast )_r .
\]
Consider the following two kinds of (homological) convergent spectral sequences ${}'E,{}''E$ associated to the double complex $\mathcal{Z}(\ast ,\ast )_0$ :
\begin{align*}
{}'E^2_{b,a} &= {}''H_b ({}'H_a (\mathcal{Z}(\ast ,\ast )_0 )) \Longrightarrow H_{a+b} (\mathrm{Tot}(\mathcal{Z}(\ast ,\ast )_0 )) \\
{}''E^2_{a,b} &= {}'H_a ({}''H_b (\mathcal{Z}(\ast ,\ast )_0 )) \Longrightarrow H_{a+b} (\mathrm{Tot}(\mathcal{Z}(\ast ,\ast )_0 ))
\end{align*}
where ${}'H,{}''H$ are the homology groups with respect to the differentials $d',d''$, respectively.
$\mathrm{Tot}$ denotes the functor which sends a double complex to the associated total complex of it.
Consider the following claim:
\begin{claim}\label{degeneration}
\begin{align*}
{}'E^1_{b,a}&=
\begin{cases}
	0 & (b>0) \\
	\mathrm{CH}^r (\mathscr{X},a,A) & (b=0)
\end{cases}  \cdots (1) \\
{''}E^1_{a,b}&=
\begin{cases}
	0 & (a>0) \\
	\mathrm{CH}^r (\mathscr{X} ,b,A)' & (a=0)
\end{cases} \cdots (2)
\end{align*}
\end{claim}

\begin{rem}
Assuming Claim \ref{degeneration}, we obtain the desired isomorphism
\[
\mathrm{CH}^r (\mathscr{X},n,A) \cong H_n (\mathrm{Tot}(\mathcal{Z}(\ast ,\ast )_0 )) \cong \mathrm{CH}^r (\mathscr{X} ,n,A)' .
\]
\end{rem}

In the following, we prove Claim \ref{degeneration}.
The assertion (1) follows from the $\mathbb{A}^1 $-homotopy invariance of $\mathrm{CH}^r (-,\ast ,A)$ proved in Theorem \ref{hom-inv-part}.
Indeed, for any $b \geq 0$, we have
\[
\mathcal{Z}(\ast ,b)_l = \underline{z}^r_w (\mathscr{X} \otimes \mathbb{A}^b ,\ast ,A)_0 ,
\]
where $w = \Set{\mathscr{X}^\circ \times (\text{faces of }\mathbb{A}^b )}$.
So, we have
\begin{align*}
H_a (\mathcal{Z}(\ast ,b)_l ) 
&= H_a (\underline{z}^r_w (\mathscr{X} \otimes \mathbb{A}^b ,\ast ,A)_0 )
\cong H_a (\underline{z}^r (\mathscr{X} \otimes \mathbb{A}^b ,\ast ,A)_0 ) \\
&= \mathrm{CH}^r (\mathscr{X} \otimes \mathbb{A}^b ,a,A) \cong \mathrm{CH}^r (\mathscr{X},a,A) \\
&= H_a (\underline{z}^r (\mathscr{X},\ast ,A)_0 )   =  H_a (\mathcal{Z}(\ast ,0)_l ),
\end{align*}
where the first isomorphism follows from Theorem \ref{special-moving-lemma} and Remark \ref{remark-normalized}.  
The second one follows from Theorem \ref{hom-inv-part}.

Thus, we can see that the pullback of cycles by the inclusion $\mathscr{X}^\circ \cong \mathscr{X}^\circ \times \{(0,\dots ,0)\} \hookrightarrow \mathscr{X}^\circ \times \mathbb{A}^b $ induces a quasi-isomorphism of complexes
\[
\mathcal{Z}(\ast ,b )_l \xrightarrow{\simeq } \mathcal{Z}(\ast ,0)_l = \underline{z}^r (\mathscr{X},\ast ,A )_0 .
\]
Note that, if $b>0$, the composite
$
\mathcal{Z}(\ast ,b)_0 \subset \mathcal{Z}(\ast ,b)_l \xrightarrow{\simeq } \mathcal{Z}(\ast ,0)_l
$
is zero by definition of $\mathcal{Z}(\ast ,b)_0$.

\begin{lem}\label{splitting}
$\mathcal{Z}(\ast ,b)_0$ is a direct summand of $\mathcal{Z}(\ast ,b)_l $ as a complex.
\end{lem}

\begin{proof}
To see this, for any $a \geq 0$, we regard $\mathcal{Z}(a,\ast )_l = \mathcal{Z}(a,\ast )_l $ as a cubical group.
Then, $\mathcal{Z}(a,\ast )_0 = (\mathcal{Z}(a,\ast  )_l )_0$ (see Lemma \ref{normalization} for the notation of the right hand side).
Since the differential $d' : \mathcal{Z}(a,\ast  )_l \to \mathcal{Z}(a-1,\ast  )_l $ is a morphism of cubical groups, 
it is compatible with the splitting of complexes in Lemma \ref{normalization} (2).
This finishes the proof.
 \end{proof}
Thus, we have ${}' H_a (\mathcal{Z}(\ast ,b)_0 )=0$ for any $a \geq 0$ and for any $b>0$.
If $b=0$, then $\mathcal{Z}(\ast ,0)_0 = \mathcal{Z}(\ast ,0)_l = z^r (\mathscr{X},\ast ,A)_0 $, hence the assertion (1) follows.

\ 

We prove the assertion (2) in the claim.
For any $a \geq 0$, we have
\[
\mathcal{Z}(a,\ast )_r = \underline{z}^r_w (\mathscr{X} \otimes \overline{\square}^{(-a)}  ,\ast ,A)'_0 ,
\]
where $w=\Set{\mathscr{X}^\circ \times (\text{faces of } \mathbb{A}^a )}$.
Thus, we have
\begin{align*}
H_b (\mathcal{Z}(a,\ast )_r ) 
&= H_b (\underline{z}^r_w (\mathscr{X} \otimes \overline{\square}^{(-a)} ,\ast ,A)'_0 ) \cong H_b (\underline{z}^r (\mathscr{X} \otimes \overline{\square}^{(-a)} ,\ast ,A)'_0 ) \\
&= \mathrm{CH}^r (\mathscr{X} \otimes \overline{\square}^{(-a)} ,b,A)'  \cong \mathrm{CH}^r (\mathscr{X},b,A)' \\
&= H_b (\underline{z}^r (\mathscr{X},\ast ,A)'_0 ) = H_b (\mathcal{Z}(\ast ,0)_r ),
\end{align*}
where the first isomorphism follows from Theorem \ref{special-moving-lemma} and Remark \ref{remark-normalized}.  
The second one follows from Theorem \ref{hom-inv-part}.

Note that, if $a>0$, the composite
\[
\mathcal{Z}(a ,\ast )_0 \subset \mathcal{Z}(a ,\ast )_r \stackrel{\simeq }{\to} \mathcal{Z}(a ,\ast )_r
\]
is zero by definition of $\mathcal{Z}(a,\ast )_0$.
By the same argument as Lemma \ref{splitting}, we can prove that $\mathcal{Z}(a,\ast )_0$ is a direct summand of $\mathcal{Z}(a,\ast )_r$.
Thus, we have \[{}' H_b (\mathcal{Z}(a,\ast )_0 )=0\] for any $b \geq 0$ and for any $a>0$.
If $a=0$, then $\mathcal{Z}(0,\ast )_0 = \mathcal{Z}(0,\ast )_r = z^r (\mathscr{X},\ast ,A)'_0 $, hence the assertion (2) follows.
This finishes the proof of Claim \ref{degeneration}, hence that of Theorem \ref{comparison-general}.
 \end{proof}

As a corollary, we have the following result, which describes the ``difference" of $\mathrm{CH}^r (\overline{X}|X^\infty  ,q)$ and $\mathrm{CH}^r (\overline{X}|X^{\prime \infty },q)$ for effective divisors $X^\infty \leq X^{\prime \infty }$ with the same support.

\begin{cor}\label{difference-str}
Let $k$ be a field and let $A$ be an abelian group.
Let $\mathscr{X}=(\overline{X},X^\infty)$ and $\mathscr{X}'=(\overline{X},X^{\prime \infty })$ be two effective modulus pairs over $k$ with $|X^\infty|=|X^{\prime \infty }|$ and $X^{\infty } \leq X^{\prime \infty }$.
Consider the quotient complex $\frac{z^r (\mathscr{X} ,\ast ,A)}{z^r (\mathscr{X}',\ast ,A)}$.
Then, for any $q$, the homology group $H_q \left( \frac{z^r (\mathscr{X} ,\ast ,A)}{z^r (\mathscr{X}',\ast ,A)}\right)$ is:
\begin{itemize}
\item[$\mathrm{(1)}$]
a $p$-group if $\mathrm{char}(k)=p>0$, and
\item[$\mathrm{(2)}$] 
uniquely divisible i.e. a $\mathbb{Q}$-vector space if $\mathrm{char}(k)=0$.
\end{itemize}
\end{cor}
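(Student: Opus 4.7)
The plan is to analyze the long exact homology sequence of the short exact sequence of complexes
\[
0 \to z^r(\mathscr{X}', \ast, A) \to z^r(\mathscr{X}, \ast, A) \to Q_\ast \to 0,
\]
where $Q_\ast$ is the quotient, and conclude by tensoring with a suitable auxiliary coefficient group. First I verify that this sequence remains short exact after tensoring over $\mathbb{Z}$ with any abelian group $B$: since $\underline{z}^r(\mathscr{X}',\ast)$ is a pure subgroup of $\underline{z}^r(\mathscr{X},\ast)$ (a cycle lies in the stronger-modulus subgroup as soon as an integer multiple does, by inspection of the generators), the quotient is torsion-free, hence flat.

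The key step is to show the inclusion $z^r(\mathscr{X}',\ast,B) \hookrightarrow z^r(\mathscr{X},\ast,B)$ is itself a quasi-isomorphism for $B$ a $\mathbb{Z}[1/p]$-module (char $p>0$) or a torsion abelian group (char $0$). The main obstacle is that Theorem \ref{comparison-general} gives only an abstract isomorphism of homology groups, while what is needed is the quasi-isomorphism induced by the inclusion. To overcome this I would construct, with coefficients in $B$, the double complexes $\mathcal{Z}$ and $\mathcal{Z}'$ attached to $\mathscr{X}$ and $\mathscr{X}'$ as in the proof of Theorem \ref{comparison-general}, linked by a natural morphism $\mathcal{Z}' \hookrightarrow \mathcal{Z}$. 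Using the ${}''E$ spectral sequence, whose only nonzero column is $a=0$ with value the common naïve complex $z^r(\mathscr{X},\ast,B)' = z^r(\mathscr{X}',\ast,B)'$ (Remark \ref{effective-case}), the induced map $\mathrm{Tot}(\mathcal{Z}') \to \mathrm{Tot}(\mathcal{Z})$ is a quasi-isomorphism. Combined with the ${}'E$-edge morphisms $\mathrm{Tot}(\mathcal{Z}) \xrightarrow{\simeq} z^r(\mathscr{X},\ast,B)$ and its primed analog in a commutative square, this delivers the desired quasi-isomorphism of cycle complexes, giving $H_q(Q_\ast \otimes B) = 0$.

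To finish, in characteristic $p>0$ I take $B = A \otimes \mathbb{Z}[1/p]$; by flatness $H_q(Q_\ast) \otimes \mathbb{Z}[1/p] \cong H_q(Q_\ast \otimes \mathbb{Z}[1/p]) = 0$, so $H_q(Q_\ast)$ is $p$-primary torsion. In characteristic $0$, I tensor the universal short exact sequence $0 \to \mathbb{Z} \to \mathbb{Q} \to \mathbb{Q}/\mathbb{Z} \to 0$ with the complex $Q_\ast$ and examine the resulting long exact homology sequence; the vanishing $H_q(Q_\ast \otimes \mathbb{Q}/\mathbb{Z}) = 0$ then forces $H_q(Q_\ast) \xrightarrow{\cong} H_q(Q_\ast) \otimes \mathbb{Q}$. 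Hence $H_q(Q_\ast)$ is both torsion-free and divisible, i.e., a $\mathbb{Q}$-vector space, completing both cases.
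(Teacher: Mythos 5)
Your overall strategy coincides with the paper's: case (1) is exactly the argument given there, and your insistence on checking that the comparison isomorphism of Theorem \ref{comparison-general} is realized by the zig-zag of inclusions through the common na\"{i}ve complex (so that the inclusion $z^r(\mathscr{X}',\ast,B)\hookrightarrow z^r(\mathscr{X},\ast,B)$ is itself a quasi-isomorphism, not merely that the two homologies are abstractly isomorphic) addresses a point the paper leaves implicit when it cites Corollary \ref{comparison-reduced}. That part of the proposal is sound.

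There is, however, a genuine (though easily repaired) gap in the characteristic-zero endgame. You tensor $0\to\mathbb{Z}\to\mathbb{Q}\to\mathbb{Q}/\mathbb{Z}\to 0$ with $Q_\ast$, where $Q_\ast$ already carries coefficients in the arbitrary abelian group $A$. The resulting sequence of complexes
\[
0 \to Q_\ast \to Q_\ast\otimes_{\mathbb{Z}}\mathbb{Q} \to Q_\ast\otimes_{\mathbb{Z}}\mathbb{Q}/\mathbb{Z} \to 0
\]
is left-exact only when $Q_\ast$ is torsion-free. Your purity argument gives torsion-freeness of the \emph{integral} quotient $Q_\ast(\mathbb{Z})$, but $Q_\ast \cong Q_\ast(\mathbb{Z})\otimes_{\mathbb{Z}} A$ has torsion whenever $A$ does (for $A=\mathbb{Z}/n$ the map $Q_\ast\to Q_\ast\otimes\mathbb{Q}$ is the zero map, not an injection), so there is no long exact homology sequence to examine for general $A$. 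The paper repairs this with a preliminary d\'evissage: the sequence $0\to M_\ast(A_{\mathrm{tors}})\to M_\ast(A)\to M_\ast(A/A_{\mathrm{tors}})\to 0$ is exact (snake lemma applied to the free cycle complexes), $M_\ast(A_{\mathrm{tors}})$ is acyclic by the torsion-coefficient case, and hence one may assume $A$ is torsion-free, after which your $\mathbb{Q}$, $\mathbb{Q}/\mathbb{Z}$ argument goes through verbatim. You should insert this reduction; with it, your proof agrees with the paper's.
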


\begin{proof}
The case (1) immediately follows from Corollary \ref{comparison-reduced} since \[H_q \left( \frac{z^r (\mathscr{X} ,\ast ,A)}{z^r (\mathscr{X}',\ast ,A)}\right) \otimes_\mathbb{Z} \mathbb{Z}[1/p] = H_q \left( \frac{z^r (\mathscr{X} ,\ast ,A\otimes_\mathbb{Z} \mathbb{Z}[1/p])}{z^r (\mathscr{X}',\ast ,A\otimes_\mathbb{Z} \mathbb{Z}[1/p])}\right)=0.\]
Next we prove the case (2).
For any abelian group $B$, define
\[
M_\ast (\mathscr{X}',\mathscr{X},B):=\frac{z^r (\mathscr{X} ,\ast , B)}{z^r (\mathscr{X}',\ast ,B)}.
\]
Then, we have the following lemma.
\begin{lem}
Let $0 \to A_1 \to A_2 \to A_3 \to 0$ be a short exact sequence of abelian groups.
Then, the induced sequence of complexes
\[
0 \to M_\ast (\mathscr{X}',\mathscr{X},A_1 ) \to M_\ast (\mathscr{X}',\mathscr{X},A_2 ) \to M_\ast (\mathscr{X}',\mathscr{X},A_3 ) \to 0
\]
is also exact.
\end{lem}
\begin{proof}
Apply the snake lemma to the injective map between short exact sequences
\[
\begin{xymatrix}{
0 \ar[r] & z^r (\mathscr{X}' ,\ast , A_1 ) \ar[r] \ar@{^{(}->}[d] & z^r (\mathscr{X}' ,\ast , A_2 ) \ar[r] \ar@{^{(}->}[d] & z^r (\mathscr{X}' ,\ast , A_3 ) \ar[r] \ar@{^{(}->}[d] & 0 \\
0 \ar[r] & z^r (\mathscr{X} ,\ast , A_1 ) \ar[r] & z^r (\mathscr{X} ,\ast , A_2 ) \ar[r] & z^r (\mathscr{X} ,\ast , A_3 ) \ar[r] & 0
}\end{xymatrix}\]
 \end{proof}
Let $A$ be an abelian group, and let $A_\mathrm{tors}$ denote the maximal torsion subgroup of $A$.
Then, by the above lemma, we have a short exact sequence
\[
0 \to M_\ast (\mathscr{X}',\mathscr{X},A_\mathrm{tors} ) \to M_\ast (\mathscr{X}',\mathscr{X},A ) \xrightarrow{\alpha } M_\ast (\mathscr{X}',\mathscr{X},A/A_\mathrm{tors} ) \to 0.
\]
Since $M_\ast (\mathscr{X}',\mathscr{X},A_\mathrm{tors} )$ is quasi-isomorphic to $0$ by Corollary \ref{comparison-reduced}, the map $\alpha $ is a quasi-isomorphism.
Therefore, we may assume that $A$ is a torsion-free abelian group.

We assume that $A$ is torsion-free, and set $A_\mathbb{Q} := A \otimes_\mathbb{Z} \mathbb{Q}$ and $A_{\mathbb{Q}/\mathbb{Z}} := A \otimes_\mathbb{Z} \mathbb{Q}/\mathbb{Z}$.
Since $A$ is torsion-free, we have a short exact sequence
\[
0 \to A \to A_\mathbb{Q} \to A_{\mathbb{Q}/\mathbb{Z}} \to 0.
\]
Then, again by the above lemma, we obtain a short exact sequence 
\[
0 \to M_\ast (\mathscr{X}',\mathscr{X},A ) \to M_\ast (\mathscr{X}',\mathscr{X},A_\mathbb{Q} ) \to M_\ast (\mathscr{X}',\mathscr{X},A_{\mathbb{Q}/\mathbb{Z}} ) \to 0.
\]
Since $A_{\mathbb{Q}/\mathbb{Z}}$ is a torsion abelian group, the complex $M_\ast (\mathscr{X}',\mathscr{X},A_{\mathbb{Q}/\mathbb{Z}})$ is quasi-isomorphic to $0$.
Therefore, for any $q \geq 0$, we have  \[\mathrm{H}_q (M_\ast (\mathscr{X}',\mathscr{X},A)) \cong \mathrm{H}_q (M_\ast (\mathscr{X}',\mathscr{X},A_{\mathbb{Q}})) \cong \mathrm{H}_q (M_\ast (\mathscr{X}',\mathscr{X},A)) \otimes_\mathbb{Z} \mathbb{Q},\]
where the right hand side is uniquely divisible.
This finishes the proof of the corollary.
 \end{proof}

\subsection*{A result on relative motivic cohomologies}

For any modulus pair $\mathscr{X}=(\overline{X},X^\infty )$ and for any $r,q \geq 0$, we have an  \'{e}tale sheaf $z^r ((-)_{\mathscr{X}},q)$ on $\overline{X}$ defined by 
\[
(\text{\'{e}tale schemes over } \overline{X}) \ni U \mapsto z^r (U_{\mathscr{X}} ,q) \ \ (U_{\mathscr{X}} :=(U,X^\infty |_U )).
\]
For any abelian group $A$,
we also have an \'{e}tale sheaf 
\[
z^r ((-)_{\mathscr{X}},q,A)=z^r ((-)_{\mathscr{X}},q) \otimes_\mathbb{Z} A
\]
since $z^r (U_{\mathscr{X}} ,q)$ is a free $\mathbb{Z}$-module for each $q$.
Then, we obtain a cohomological complex of sheaves on the Nisnevich site $\overline{X}_{\mathrm{Nis}}$:
\[
A(r)_{\mathscr{X}} := z^r ((-)_{\mathscr{X}} ,2r-\ast ,A).
\]

\begin{dfn}\label{relative-mot}
For a modulus pair $\mathscr{X}$ over $k$ and for any $r,q \geq 0$, we define the {\it relative motivic cohomology groups of $\mathscr{X}$ with coefficients in $A$} as the hypercohomology group
\[
H_{\mathcal{M}}^q (\mathscr{X},A(r))
:=
\mathbb{H}^q_{\mathrm{Nis}} (\overline{X},A(r)_{\mathscr{X}} ).
\]
\end{dfn}

\begin{rem}
When $\mathscr{X}$ is effective, the relative motivic cohomology is introduced in \cite{BS}. If moreover $\mathscr{X}^\circ$ is smooth, the relative motivic cohomology is contravariantly functorial with respect to coadmissible left fine (not necessarily flat) morphisms of modulus pairs (\cite{Kai}).
We do not know whether this is true for non-effective modulus pairs or not.
\end{rem}

\begin{thm}\label{comparison-reduced-coh} 
Let $k$ be a field and let $A$ be an abelian group.
Assume one of the following conditions:
\begin{itemize}
\item[$\mathrm{(i)}$]
$\mathrm{char}(k)=p>0$ and $A$ is a $\mathbb{Z}[1/p]$-module, or
\item[$\mathrm{(ii)}$]
$\mathrm{char}(k)=0$ and $A$ is a torsion abelian group.
\end{itemize}
Then, for any effective modulus pair $\mathscr{X}=(\overline{X},X^\infty )$ over $k$ and for any $r,q \geq 0$, the relative motivic cohomology group $H_{\mathcal{M}}^q (\mathscr{X},A(r))$  
 is independent of the multiplicity of the Cartier divisor $X^\infty$.
In other words, if $X^{\prime \infty }$ is another effective Cartier divisor on $\overline{X}$ such that $|X^{\prime \infty }| = |X^\infty |$ and we set $\mathscr{X}':=(\overline{X},X^{\prime \infty })$, then we have a canonical isomorphism 
\[
H_{\mathcal{M}}^q (\mathscr{X},A(r)) \cong H_{\mathcal{M}}^q (\mathscr{X}',A(r)).
\]
\end{thm}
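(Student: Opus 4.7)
The plan is to reduce to the case $X^\infty \leq X'^\infty$ and then check that the natural inclusion of cycle complexes induces a quasi-isomorphism of complexes of Nisnevich sheaves on $\overline{X}$, so that hypercohomology is preserved. Given effective $X^\infty$ and $X'^\infty$ with common support, I would first set $X''^\infty := X^\infty + X'^\infty$ and $\mathscr{X}'' := (\overline{X}, X''^\infty)$; this is an effective modulus pair with the same support dominating both. Proving the isomorphism for the pairs $(\mathscr{X}, \mathscr{X}'')$ and $(\mathscr{X}', \mathscr{X}'')$ and composing reduces us to the case $X^\infty \leq X'^\infty$.

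Under this assumption, for every étale morphism $U \to \overline{X}$ the modulus condition for $(U, X'^\infty|_U)$ is stronger than that for $(U, X^\infty|_U)$, producing a natural inclusion of subcomplexes $z^r(U_{\mathscr{X}'}, \ast, A) \hookrightarrow z^r(U_{\mathscr{X}}, \ast, A)$. These inclusions are functorial in $U$ and so assemble into a morphism
\[
\iota : A(r)_{\mathscr{X}'} \longrightarrow A(r)_{\mathscr{X}}
\]
of complexes of Nisnevich sheaves on $\overline{X}$. The aim is then to show that $\iota$ is sectionwise a quasi-isomorphism on every étale $U$; since stalks at Nisnevich points are filtered colimits of such sections and filtered colimits are exact, this immediately upgrades to a quasi-isomorphism of complexes of Nisnevich sheaves, forcing the induced map on Nisnevich hypercohomology to be an isomorphism.

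To verify the sectionwise claim, I note that $(U, X^\infty|_U)$ and $(U, X'^\infty|_U)$ are themselves effective modulus pairs (Definition \ref{modulus-pair} explicitly permits essentially of finite type total spaces), with the same support and with $X^\infty|_U \leq X'^\infty|_U$. Applying Corollary \ref{difference-str} to this étale-local pair shows that the homology of $z^r(U_{\mathscr{X}}, \ast, A)/z^r(U_{\mathscr{X}'}, \ast, A)$ is a $p$-group in case (i) and a $\mathbb{Q}$-vector space in case (ii). In case (i), the entire complex is a complex of $\mathbb{Z}[1/p]$-modules (since $z^r(-, \ast, A) = z^r(-, \ast) \otimes_{\mathbb{Z}} A$ and $A$ is a $\mathbb{Z}[1/p]$-module), so it has no nonzero $p$-primary torsion. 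In case (ii), the same tensor-product description shows that the complex consists of torsion abelian groups (since $A$ does), and a torsion uniquely divisible abelian group must vanish. Either way, $\iota(U)$ is a quasi-isomorphism.

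The main obstacle is foundational rather than conceptual: one needs the chain of earlier results underlying Corollary \ref{difference-str} (the moving lemma of Theorem \ref{moving-lemma}, $\mathbb{A}^1$-homotopy invariance via the $\mathbb{W}(k)$-action of Theorem \ref{Witt-action}, and the comparison with the naïve higher Chow group of Theorem \ref{comparison-general}) to apply uniformly to the étale-local pairs $(U, X^\infty|_U)$, which are essentially of finite type rather than of finite type over $k$. The definitions and theorems throughout the paper are already formulated to accommodate this generality, so no additional work is required to push the conclusion through.
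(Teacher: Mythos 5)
Your proof is correct, but it is organized differently from the paper's. The paper's argument is a one-line zig-zag through the na\"{i}ve complex: by Theorem \ref{comparison-general} the inclusions $z^r(-_{\mathscr{X}},\ast,A)\hookrightarrow z^r(-_{\mathscr{X}},\ast,A)'$ and $z^r(-_{\mathscr{X}'},\ast,A)\hookrightarrow z^r(-_{\mathscr{X}'},\ast,A)'$ are sectionwise quasi-isomorphisms, and by Remark \ref{effective-case} the two na\"{i}ve complexes are literally equal, so one gets a canonical isomorphism in the derived category of Nisnevich sheaves without ever assuming the divisors are comparable. You instead reduce to $X^\infty\leq X'^{\infty}$ via the dominating divisor $X^\infty+X'^{\infty}$ and show the direct inclusion $z^r(-_{\mathscr{X}'},\ast,A)\hookrightarrow z^r(-_{\mathscr{X}},\ast,A)$ is a sectionwise quasi-isomorphism by combining Corollary \ref{difference-str} with the coefficient hypotheses (a $p$-group that is a $\mathbb{Z}[1/p]$-module vanishes; a torsion $\mathbb{Q}$-vector space vanishes). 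Since Corollary \ref{difference-str} is itself deduced from Theorem \ref{comparison-general} and Remark \ref{effective-case}, the two arguments rest on the same machinery; your version is slightly longer but has the merit of exhibiting the isomorphism as induced by the actual inclusion of subcomplexes in the comparable case, while the paper's version avoids both the reduction step and the passage through the quotient complex. Your sheaf-theoretic bookkeeping (sectionwise quasi-isomorphism $\Rightarrow$ stalkwise via filtered colimits $\Rightarrow$ isomorphism on hypercohomology) and your remark about \'{e}tale-local pairs remaining within the scope of Definition \ref{modulus-pair} are both sound.
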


\begin{proof}
By Theorem \ref{comparison-general}, Remark \ref{effective-case}, the diagram of complexes of presheaves
\[
z^r (-_\mathscr{X} ,\ast ,A) \hookrightarrow z^r (-_\mathscr{X} ,\ast ,A)' = z^r (-_{\mathscr{X}'} ,\ast ,A)' \hookleftarrow z^r (-_{\mathscr{X}'} ,\ast ,A)
\]
induces a canonical isomorphism in the derived category of complexes of sheaves $z^r (-_\mathscr{X} ,\ast ,A) \cong z^r (-_{\mathscr{X}'} ,\ast ,A)$.
This finishes the proof.
 \end{proof}

\addcontentsline{toc}{section}{References}

\end{document}